\newcommand{\esssup}{\mathop{\mathrm{ess\,sup}}\displaylimits}
\numberwithin{equation}{section}
\def\JELname{{\bfseries JEL Classification}\enspace}
      \def\JEL#1{\par\addvspace\medskipamount{\rightskip=0pt plus1cm
      \def\and{\ifhmode\unskip\nobreak\fi\ $\cdot$
      }\noindent\JELname\ignorespaces#1\par}}
\begin{document}
\title{From optimal martingales to randomized dual optimal stopping }
\subtitle{~}
\author{
Denis Belomestny
\and
John Schoenmakers
}

\institute{Denis Belomestny\at Faculty of Mathematics, University of Duisburg--Essen, Thea Leymann Str. 9, 45127 Essen, Germany \\
\email{denis.belomestny@uni-due.de
}
\and{John Schoenmakers\at WIAS Berlin, Germany\\
\email{john.schoenmakers@wias-berlin.de}}
}

\date{}
\maketitle
\begin{abstract}
In this article we study and classify  optimal martingales in the dual
formulation of optimal stopping problems. In this respect we distinguish
between weakly optimal and surely optimal martingales. It is shown that the family of
weakly optimal and surely optimal martingales may  be quite large. On the
other hand it is shown that the Doob-martingale, that is, the martingale part of
the Snell envelope, is in a certain sense the most robust surely optimal
martingale under random perturbations. This new insight leads to a novel
randomized dual martingale minimization algorithm that doesn't require nested
simulation. As a main feature, in a possibly large family of optimal
martingales the algorithm efficiently selects  a martingale that is as close as possible
to the Doob martingale. As a result, one obtains the dual
upper bound for the optimal stopping problem with low variance.

\keywords{Optimal stopping problem, Doob-martingale, Randomization.}
\subclass{91G60\and 65C05\and 60G40}
\JEL{G10\and G12\and G13}
\end{abstract}

\section{Introduction}

The last decades have seen a huge development of numerical methods for solving
optimal stopping problems. Such problems became very prominent in the
financial industry in the form of American derivatives. For such derivatives
one needs to evaluate the right of exercising (stopping) a certain cash-flow
(reward) process $Z$ at some (stopping) time $\tau$, up to some time horizon
$T$. From a mathematical point of view this evaluation comes down to solving
an optimal stopping problem
\[
Y^{\star}=\sup_{\text{stopping time\ }\tau\leq T}\mathsf{E[}\underset
{\text{reward at stopping}}{Z_{\tau}}].
\]
Typically the cash-flow $Z$ depends on various underlying assets and/or
interest rates and as such is part of a high dimensional Markovian framework.
Particularly for high dimensional stopping problems, virtually all generic
numerical solutions are Monte Carlo based. Most of the first numerical
solution approaches were of primal nature in the sense that the goal was to
construct a \textquotedblleft good\textquotedblright\ exercise policy and to
simulate a lower biased estimate of $Y^{\star}.$ In this respect we mention,
for example, the well-known regression methods by Longstaff \& Schwartz
\cite{J_LS2001}, Tsiklis \& Van Roy \cite{J_TV2001}, and the stochastic mesh
approach by Broadie \& Glasserman \cite{J_BrGl}, and the stochastic policy improvement
method by Kolodko \& Schoenmakers \cite{J_KS2006}. For further references we
refer to the literature, for example \cite{Gl} and the references therein.

In this paper we focus on the dual approach developed by Rogers
\cite{J_Rogers2002}, and Haugh \& Kogan \cite{J_HK2004}, initiated earlier by
Davis \& Karatzas \cite{J_DK1994}. In the dual method the stopping problem is
solved by minimizing over a set of martingales, rather than a set of stopping
times,
\begin{equation}
\label{In1}Y^{\star}=\inf_{M:\text{ martingale, }M_{0}=0\text{ }}%
\mathsf{E}\left[  \max_{0\leq s\leq T}\left(  Z_{s}-M_{s}\right)  \right]  .
\end{equation}
A canonical minimizer of this dual problem is the martingale part, $M^{\star}
$  of the Doob(-Meyer) decomposition of the Snell envelope
\[
Y_{t}^{\star}=\sup_{t\leq\text{stopping time }\tau\leq T}\mathsf{E}%
_{\mathcal{F}_{t}}\left[  Z_{\tau}\right]  ,
\]
which moreover has the nice property that
\begin{equation}
\label{In2}Y_{0}^{\star}=\max_{0\leq s\leq T}(Z_{s}-M_{s}^{\star})\text{
\ \ almost surely.}%
\end{equation}
That is, if one would succeed in finding $M^{\star}$, the value of $Y^{\star}
$ can be obtained from one trajectory of $Z-M^{\star}$ only.

Shortly after the development of the duality method in \cite{J_Rogers2002} and
\cite{J_HK2004}, various numerical approaches for computing dual upper bounds
for American options based on it appeared. May be one of the most popular
methods is the nested simulation approach by Andersen \& Broadie
\cite{J_AB2004}, who essentially construct an approximation to the Doob
martingale of the Snell envelope via stopping times obtained by the Longstaff
\& Schwartz method \cite{J_LS2001}. A few years later, a linear Monte Carlo
method for dual upper bounds was proposed in \cite{J_BelBenSch}. In fact, as a
common feature, both \cite{J_AB2004} and \cite{J_BelBenSch} aimed at
constructing (an approximation of) the Doob martingale of the Snell envelope
via some approximative knowledge of continuation functions obtained by the
method of Longstaff \& Schwartz or in another way. Instead of relying on such
information, the common goal in later studies \cite{J_DesFarMoa}, \cite{J_SchZhaHua}, \cite{J_Bel}, 
\cite{J_BelHilSch}, was to minimize the
expectation functional in the dual representation (\ref{In1}) over a linear
space of generic ``elementary'' martingales. Indeed, by parameterizing the
martingale family in a linear way and replacing the expectation in (\ref{In1})
by the sample mean over a large set of trajectories, the resulting
minimization comes down to solving a linear program. However, it was pointed
out in \cite{J_SchZhaHua} that in general there may exist martingales that are ``weakly'' optimal
in the sense that they minimize (\ref{In1}), but fail to have the ``almost
sure property'' (\ref{In2}). As a consequence, the estimator for the dual
upper bound due to such martingales may have high variance. Moreover, an
example in \cite{J_SchZhaHua} illustrates that a straightforward minimization of the
sample mean corresponding to (\ref{In1}) may end up with a martingale that is
asymptotically optimal in the sense of (\ref{In1}) but not surely optimal in
the sense of (\ref{In2}), when the sample size tends to infinity. As a remedy
to this problem, in \cite{J_Bel} variance penalization is proposed, whereas in
\cite{J_BelHilSch} the sample mean is replaced by the maximum over all trajectories.
\par
In this paper we first extend the study of surely optimal martingales in \cite{J_SchZhaHua}
to  the larger
class of {\em weakly} optimal
martingales. As a principal contribution, we give a
complete characterization of weakly and surely optimal martingales and moreover consider
the notion of randomized dual martingales. In particular, it is shown that in
general there may be a fullness of martingales that are optimal but not
surely optimal. In fact, straightforward minimization procedures based on
the sample mean in (\ref{In1}) may typically return martingales of this kind, even
if the Doob martingale of the Snell envelope is contained in the martingale family (as
illustrated already in \cite{J_SchZhaHua}, though at a somewhat pathological example
with partially deterministic cash-flows). As another main contribution we will show
that the Doob martingale plays a distinguished  role within the family of all optimal martingales.
Namely, it will be shown that by
randomizing the arguments in the path-wise maximum for each trajectory in a particular way,
any non-Doob optimal martingale can be turned to a suboptimal one. More specifically,
we will prove that there exists a particular ``optimal randomization'' such
that the Doob martingale, perturbed or randomized with it, remains guaranteed (surely)
optimal, while any other surely or weakly optimal martingale turns to a suboptimal one.
Of course, as a rule this ``optimal
randomization'' is not directly known or available in practical applications. But, it turns out that by
just incorporating some simple randomization due to uniform random variables,
 sample mean minimization  may return a martingale that is closer to the
Doob-martingale than one obtained without randomization. We thus end up with a martingale with low variance, which in
turn guarantees that the corresponding upper bound based on (\ref{In1}) is
tight (see \cite{J_Bel} and\cite{J_SchZhaHua}). Compared to \cite{J_BelHilSch} and
\cite{J_Bel}, the benefit of this new randomized dual approach is its
computational efficiency:
From the experiments we conclude that it may be sufficient to add on for each trajectory
simple i.i.d. uniform random variables to (some of) the arguments of the maximum.
An extensive numerical analysis of the here presented randomized dual martingale approach
will certainly be an interesting subsequent study but is
considered beyond the scope of this article.

The structure of the paper is as follows. Section~\ref{SecCh} carries out a
systematic theoretical analysis of optimal martingales. In
Section~\ref{SecRan} we deal with randomized optimal martingales and the effect
of randomizing the Doob-martingale. More technical proofs are given in
Section~\ref{SecProofs} and some first numerical examples are presented in
Section~\ref{SecNum}.

\section{Characterization of optimal martingales}

\label{SecCh}

Since practically any numerical approach to optimal stopping is based on a
discrete exercise grid, we will work within in a discrete time setup. That is,
it is assumed that exercise (or stopping) is restricted to a discrete set of
exercise times $t_{0}=0,$ $...,$ $t_{J}=T,$ for some time horizon $T$ and some
$J\in\mathbb{N}_{+}.$ For notational convenience we will further identify the
exercise times $t_{j}$ with their index $j,$ and thus monitor the reward
process $Z_{j},$ at the ``times'' $j=0,$ $...,$ $J.$

Let $(\Omega,\mathcal{F},\mathrm{P})$ be a filtered probability space with
discrete filtration $\mathcal{F}=(\mathcal{F}_{j})_{j\geq0}.$ An optimal
stopping problem is a problem of stopping the reward process $(Z_{j})_{j\geq
0}$ in such a way that the expected reward is maximized. The value of the
optimal stopping problem with horizon $J$ at time $j\in\{0,\ldots,J\}$ is
given by
\begin{equation}
Y_{j}^{\star}=\esssup_{{\tau\in\mathcal{T}[j,\ldots,J]}} \mathsf{E}%
_{\mathcal{F}_{j}}[Z_{\tau}], \label{eq:stop-prim}%
\end{equation}
provided that $Z$ was not stopped before $j.$ In (\ref{eq:stop-prim}),
$\mathcal{T}[j,\ldots,J]$ is the set of $\mathcal{F}$-stopping times taking
values in $\{j,\ldots,J\}$ and the process $\left(  Y_{j}^{\star}\right)
_{j\geq0}$ is called the Snell envelope. It is well known that $Y^{\star}$ is
a supermartingale satisfying the backward dynamic programming equation
(Bellman principle):
\[
Y_{j}^{\star}=\max\left(  Z_{j},\mathsf{E}_{\mathcal{F}_{j}}[Y_{j+1}^{\star
}]\right)  ,\text{ \ \ }0\leq j<J,\text{ \ \ }Y_{J}^{\star}=Z_{J}.
\]
Along with a primal approach based on the representation \eqref{eq:stop-prim},
a dual method was proposed in \cite{J_Rogers2002} and \cite{J_HK2004}. Below
we give a short self contained recap while including the notions of {\em weak} and {\em sure} optimality. 

Let $\mathcal{M}$ be the set of martingales $M$ adapted to $\mathcal{F}$ with
$M_{0}=0.$ By using the Doob's optimal sampling theorem one observes that
\begin{equation}
\label{2up}Y_{j}^{\star}\leq\mathsf{E}_{\mathcal{F}_{j}}\left[  \max_{j\leq
r\leq J}\left(  Z_{r}-M_{r}+M_{j}\right)  \right]  ,\text{ \ }j=0,\ldots,J,
\end{equation}
for any $M\in\mathcal{M}.$ We will say that a martingale $M$ is
\textit{weakly optimal}, or just \textit{optimal}, at $j,$ for some $j=0,...,J,$ if
\begin{equation}
Y_{j}^{\star} =\mathsf{E}_{\mathcal{F}_{j}}\left[  \max_{j\leq r\leq J}\left(
Z_{r}-M_{r}+M_{j}\right)  \right]  . \label{defopt}%
\end{equation}
The set of all martingales (weakly) optimal at $j$ will be denoted by $\mathcal{M}%
^{\circ,j}.$ The set of martingales optimal at $j$ for all $j=0,\ldots,J,$ is
denoted by $\mathcal{M}^{\circ}.$ We say that a martingale $M$ is
\textit{surely optimal} at $j,$ for some $j=0,...,J,$ if
\begin{equation}
Y_{j}^{\star}=\max_{j\leq r\leq J}\left(  Z_{r}-M_{r}+M_{j}\right)  \text{
\ \ almost surely.\ } \label{sure}%
\end{equation}
The set of all surely optimal martingales at $j$ will be denoted by
$\mathcal{M}^{\circ\circ,j}.$ The set of surely optimal martingales at $j$ for
all $j=0,\ldots,J,$ is denoted by $\mathcal{M}^{\circ\circ}.$ Note that,
obviously, $\mathcal{M}^{\circ\circ}$ $\subset$ $\mathcal{M}^{\circ}$
$\subset$ $\mathcal{M}.$

Now there always exists at least one surely optimal martingale, the so-called
Doob-martingale coming from the Doob decomposition of the Snell envelope
$(Y_{j}^{\star})_{j\geq0}.$ Indeed, consider the Doob decomposition of
$Y^{\star},$ that is,
\begin{equation}
Y_{j}^{\star}=Y_{0}^{\star}+M_{j}^{\star}-A_{j}^{\star}, \label{Doob0}%
\end{equation}
where $M^{\star}$ is a martingale with $M_{0}^{\star}=0,$ and $A^{\star}$ is
predictable with $A_{0}^{\star}=0.$ It follows immediately that
\begin{equation}
M_{j}^{\star}=\sum\limits_{l=1}^{j} (Y_{l}^{\star}-\mathsf{E}_{\mathcal{F}%
_{l-1}}[Y_{l}^{\star}]) ,\quad A_{j}^{\star}=\sum\limits_{l=1}^{j}
(Y_{l-1}^{\star}-\mathsf{E}_{\mathcal{F}_{l-1}}[Y_{l}^{\star}]),
\label{AddMart}%
\end{equation}
and so $A^{\star}$ is non-decreasing due to the fact that $Y^{\star}$ is a
supermartingale. One thus has by (\ref{Doob0}) on the one hand%
\[
\max_{j\leq r\leq J}(Z_{r}-M_{r}^{\star}+M_{j}^{\star})=Y_{j}^{\star}%
+\max_{j\leq r\leq J}(Z_{r}-Y_{r}^{\star}+A_{j}^{\star}-A_{r}^{\star})\leq
Y_{j}^{\star}%
\]
and due to (\ref{2up}) on the other hand%
\[
\mathsf{E}_{\mathcal{F}_{j}}\left[  \max_{j\leq r\leq J}(Z_{r}-M_{r}^{\star
}+M_{j}^{\star})\right]  \geq Y_{j}^{\star}.
\]
Thus, it follows that (\ref{sure}) holds for
arbitrary $j,$ hence $M^{\star}\in\mathcal{M}^{\circ\circ}.$
Furthermore we have the following properties of the sets \((\mathcal{M}^{\circ,j})\) and \((\mathcal{M}^{\circ
\circ,j}).\)
\begin{proposition}
\label{convex} The sets $\mathcal{M}^{\circ,j}$ and $\mathcal{M}^{\circ
\circ,j}$ for $j=0,...,J,$ $\mathcal{M}^{\circ},$ and $\mathcal{M}^{\circ
\circ}$ are convex.
\end{proposition}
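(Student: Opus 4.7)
The plan is to exploit the convexity of the map $x\mapsto \max_{j\le r\le J}x_r$, combined with the two inequalities that are already built into the definitions, namely the upper bound \eqref{2up} (which holds for every martingale) and the definitions \eqref{defopt} and \eqref{sure} of weak/sure optimality.

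First, for $\mathcal{M}^{\circ,j}$: fix $M^{(1)},M^{(2)}\in\mathcal{M}^{\circ,j}$ and $\lambda\in[0,1]$, and set $M:=\lambda M^{(1)}+(1-\lambda)M^{(2)}$, which is again in $\mathcal{M}$. Writing $Z_r-M_r+M_j$ as the convex combination $\lambda(Z_r-M_r^{(1)}+M_j^{(1)})+(1-\lambda)(Z_r-M_r^{(2)}+M_j^{(2)})$ and using convexity of $\max$, one gets pathwise
\[
\max_{j\le r\le J}(Z_r-M_r+M_j)\le \lambda \max_{j\le r\le J}(Z_r-M_r^{(1)}+M_j^{(1)})+(1-\lambda)\max_{j\le r\le J}(Z_r-M_r^{(2)}+M_j^{(2)}).
\]
Taking $\mathsf{E}_{\mathcal{F}_j}[\cdot]$ and using weak optimality of $M^{(1)},M^{(2)}$ yields $\mathsf{E}_{\mathcal{F}_j}[\max_{j\le r\le J}(Z_r-M_r+M_j)]\le Y_j^\star$, while \eqref{2up} gives the reverse inequality; hence $M\in\mathcal{M}^{\circ,j}$.

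Second, for $\mathcal{M}^{\circ\circ,j}$: with the same setup and starting from the same pathwise inequality, sure optimality of $M^{(1)}$ and $M^{(2)}$ upgrades the right-hand side to $\lambda Y_j^\star+(1-\lambda)Y_j^\star=Y_j^\star$ almost surely, so $\max_{j\le r\le J}(Z_r-M_r+M_j)\le Y_j^\star$ a.s. Combined with \eqref{2up}, this forces $\mathsf{E}_{\mathcal{F}_j}[Y_j^\star-\max_{j\le r\le J}(Z_r-M_r+M_j)]\le 0$, and since the integrand is nonnegative a.s., it vanishes a.s., i.e. $M\in\mathcal{M}^{\circ\circ,j}$.

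Finally, $\mathcal{M}^{\circ}=\bigcap_{j=0}^{J}\mathcal{M}^{\circ,j}$ and $\mathcal{M}^{\circ\circ}=\bigcap_{j=0}^{J}\mathcal{M}^{\circ\circ,j}$, and intersections of convex sets are convex. There is no real obstacle here: the only subtlety is the step in the surely-optimal case where a pathwise $\le$ inequality and a conditional-expectation $\ge$ inequality must be glued together via nonnegativity to conclude a.s.\ equality; everything else is just convexity of the maximum.
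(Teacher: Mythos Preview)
Your proof is correct and follows essentially the same approach as the paper: both decompose $Z_r-M_r+M_j$ as a convex combination, apply convexity of the pathwise maximum, and then sandwich against the dual lower bound \eqref{2up} to obtain equality (in expectation for $\mathcal{M}^{\circ,j}$, almost surely for $\mathcal{M}^{\circ\circ,j}$). Your explicit remark that $\mathcal{M}^{\circ}$ and $\mathcal{M}^{\circ\circ}$ are intersections of the time-$j$ sets is a clean way to finish, which the paper leaves implicit.
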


As an immediate consequence of Proposition~\ref{convex}; if there exist more
than one weakly (respectively surely) optimal martingale, then there exist
infinitely many weakly (respectively surely) optimal martingales.

\begin{proposition}
\label{prop:propM} It holds that $M$ $\in\mathcal{M}^{\circ,j}$ for some
$0\leq j\leq J,$ if and only if for any optimal stopping time $\tau_{j}%
^{\star}\geq j$ satisfying%
\[
Y_{j}^{\star}=\sup_{\tau\geq j}\mathsf{E}_{\mathcal{F}_{j}}[  Z_{\tau
}]  =\mathsf{E}_{\mathcal{F}_{j}}[  Z_{\tau_{j}^{\star}}]  ,
\]
one has that%
\[
\max_{j\leq r\leq J}\left(  Z_{r}-M_{r}\right)  =Z_{\tau_{j}^{\star}}%
-M_{\tau_{j}^{\star}}.%
\]

\end{proposition}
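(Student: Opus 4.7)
The plan is to prove the two implications separately, using the optional sampling theorem and the fact that a nonnegative random variable with vanishing conditional expectation must itself vanish almost surely.

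For the forward direction, suppose $M\in\mathcal{M}^{\circ,j}$ and let $\tau_{j}^{\star}\geq j$ be any $\mathcal{F}$-stopping time attaining $Y_{j}^{\star}=\mathsf{E}_{\mathcal{F}_{j}}[Z_{\tau_{j}^{\star}}]$. Since $\tau_{j}^{\star}$ is bounded by $J$ and $M$ is a martingale, Doob's optional sampling gives $\mathsf{E}_{\mathcal{F}_{j}}[M_{\tau_{j}^{\star}}]=M_{j}$, so
\[
\mathsf{E}_{\mathcal{F}_{j}}\bigl[Z_{\tau_{j}^{\star}}-M_{\tau_{j}^{\star}}+M_{j}\bigr]=Y_{j}^{\star}=\mathsf{E}_{\mathcal{F}_{j}}\Bigl[\max_{j\leq r\leq J}\bigl(Z_{r}-M_{r}+M_{j}\bigr)\Bigr],
\]
where the right-hand equality is the weak optimality (\ref{defopt}). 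Subtracting yields
\[
\mathsf{E}_{\mathcal{F}_{j}}\Bigl[\max_{j\leq r\leq J}(Z_{r}-M_{r})-(Z_{\tau_{j}^{\star}}-M_{\tau_{j}^{\star}})\Bigr]=0.
\]
The integrand is pointwise nonnegative because $\tau_{j}^{\star}\in\{j,\dots,J\}$, so it equals zero almost surely, which is the desired identity.

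For the backward direction, assume the path-wise identity holds for every optimal $\tau_{j}^{\star}$. Standard results for discrete-time optimal stopping guarantee existence of at least one such stopping time (for instance, the first entry time of $Y^{\star}$ into $\{Z=Y^{\star}\}$ starting from $j$). Fix this $\tau_{j}^{\star}$. Then, again by optional sampling,
\[
\mathsf{E}_{\mathcal{F}_{j}}\Bigl[\max_{j\leq r\leq J}(Z_{r}-M_{r}+M_{j})\Bigr]=\mathsf{E}_{\mathcal{F}_{j}}\bigl[Z_{\tau_{j}^{\star}}-M_{\tau_{j}^{\star}}+M_{j}\bigr]=\mathsf{E}_{\mathcal{F}_{j}}[Z_{\tau_{j}^{\star}}]=Y_{j}^{\star},
\]
so $M\in\mathcal{M}^{\circ,j}$.

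The only subtle point is the quantifier structure: the forward direction must be established for \emph{every} optimal stopping time $\tau_{j}^{\star}$, not just one. This is handled automatically by the argument above, since the optional sampling identity and the nonnegativity of $\max_{j\leq r\leq J}(Z_{r}-M_{r})-(Z_{\tau_{j}^{\star}}-M_{\tau_{j}^{\star}})$ apply to any bounded optimal stopping time, and there is no step that selects a particular one. Everything else is routine manipulation, so I do not anticipate any genuine obstacle.
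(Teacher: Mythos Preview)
Your proof is correct and follows essentially the same argument as the paper: in both directions you use optional sampling to identify $\mathsf{E}_{\mathcal{F}_{j}}[Z_{\tau_{j}^{\star}}-M_{\tau_{j}^{\star}}]$ with $Y_{j}^{\star}-M_{j}$, and in the forward direction you conclude via the sandwich of a nonnegative random variable with zero conditional expectation. Your added remarks on the universal quantifier over $\tau_{j}^{\star}$ and on the existence of an optimal stopping time for the converse are sound clarifications that the paper leaves implicit.
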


\begin{proof}
Let $\tau_{j}^{\star}\geq j$ be an optimal stopping time. Suppose that $M$
$\in\mathcal{M}^{\circ,j}.$ On the one hand, one trivially has%
\[
\max_{j\leq r\leq J}\left(  Z_{r}-M_{r}\right)  -\left(  Z_{\tau_{j}^{\star}%
}-M_{\tau_{j}^{\star}}\right)  \geq0
\]
and on the other, since $M$ $\in\mathcal{M}^{\circ,j}$ (see (\ref{defopt})),%
\[
\mathsf{E}_{\mathcal{F}_{j}}\left[  \max_{j\leq r\leq J}\left(  Z_{r}%
-M_{r}\right)  -\left(  Z_{\tau_{j}^{\star}}-M_{\tau_{j}^{\star}}\right)
\right]  =Y_{j}^{\star}-M_{j}-\left(  Y_{j}^{\star}-M_{j}\right)  =0,\text{
\ \ hence}%
\]%
\begin{equation}
\max_{j\leq r\leq J}\left(  Z_{r}-M_{r}\right)  =Z_{\tau_{j}^{\star}}%
-M_{\tau_{j}^{\star}}\ \ \text{almost surely.} \label{co}%
\end{equation}
The converse follows from (\ref{co}) by taking conditional $\mathcal{F}_{j}$-expectations.
\end{proof}

It will be shown below that the class of the optimal martingales $\mathcal{M}^{\circ}$
may be considerably large. In fact, any such martingale can be seen as a
perturbation of the Doob martingale $(M_{j}^{\star}).$ For this, let us
introduce some further notation and define $\tau^{0}:=0^{-}$ with $0^{-}<0$ by
convention and let, for $l \geq1,$ $\tau^{l}$ be the first optimal stopping
time strictly after $\tau^{l-1}.$ That is, if $\tau^{l-1}<J,$ we define
recursively%
\[
\tau^{l}=\inf\left\{  \tau^{l-1}<i\leq J: Z_{i}\geq\mathsf{E}_{\mathcal{F}%
_{i}}\left[  Y_{i+1}^{\star}\right]  \right\}  ,
\]
where $Y_{J+1}^{\star}:=0.$ There so will be a last number, $l_{J}$ say, with
$\tau^{l_{J}}=J.$ Further, the family $\left(  \tau_{i}^{\star}\right)
_{i\geq0}$ defined by%
\begin{equation}
\tau_{i}^{\star}=\tau^{l}\text{ \ \ for \ }\tau^{l-1}<i\leq\tau^{l},\text{
\ \ }l\geq1, \label{pol}%
\end{equation}
is a \emph{consistent optimal stopping family} in the sense that $Y_{j}%
^{\star}=\mathsf{E}_{\mathcal{F}_{j}}[ Z_{\tau_{j}^{\star}}]$ and that
$\tau_{i}^{\star}>i$ implies $\tau_{i}^{\star}$ $=$ $\tau_{i+1}^{\star}.$

The next lemma provides a corner stone for an explicit structural
characterization of (weakly) optimal martingales.

\begin{lemma}
\label{lemopt} $M\in\mathcal{M}^{\circ}$ if and only if $M$ is an adapted martingale
with $M_{0}=0$ such that the identities%
\begin{align*}
\text{(i) \ \ }\max_{\tau^{l-1}<r\leq\tau^{l}}(Z_{r}-M_{r})  &  =Z_{\tau^{l}%
}-M_{\tau^{l}}\text{ \ \ if \ \ }l\geq1,\\
\text{(ii) \ \ }\max_{\tau^{l-1}\leq r\leq\tau^{l}}(Z_{r}-M_{r})  &
=Z_{\tau^{l-1}}-M_{\tau^{l-1}}\text{ \ if \ \ }l>1
\end{align*}
hold.
\end{lemma}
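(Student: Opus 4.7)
The plan is to reduce both implications to Proposition~\ref{prop:propM}, which characterizes weak optimality at a deterministic time $j$ by the pathwise identity $\max_{j\le r\le J}(Z_r-M_r)=Z_{\tau_j^{\star}}-M_{\tau_j^{\star}}$ for every optimal stopping time $\tau_j^{\star}\ge j$. Since (i) and (ii) involve the random times $\tau^l$, the idea is to ``freeze'' $\tau^{l-1}$ on events of the form $\{\tau^{l-1}=j_0\}$ with deterministic $j_0$, apply Proposition~\ref{prop:propM} at a suitable deterministic starting time, and then take a union over $j_0$.

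For the direction $M\in\mathcal{M}^\circ\Rightarrow$ (i), (ii), fix $l\ge 2$ and deterministic $j_0\in\{0,\ldots,J\}$ and work on the event $\{\tau^{l-1}=j_0\}$. There $Z_{j_0}=Y_{j_0}^{\star}$ (since $\tau^{l-1}$ is by construction an optimal stopping time), so the constant time $j_0$ is itself an optimal stopping time at $j=j_0$; Proposition~\ref{prop:propM} applied at $j=j_0$ gives $\max_{j_0\le r\le J}(Z_r-M_r)=Z_{j_0}-M_{j_0}$, and restricting to $r\le\tau^l$ yields (ii). Next, apply Proposition~\ref{prop:propM} at $j=j_0+1$; by the construction (\ref{pol}) of the consistent family one has $\tau^{\star}_{j_0+1}=\tau^l$ on this event, hence $\max_{j_0+1\le r\le J}(Z_r-M_r)=Z_{\tau^l}-M_{\tau^l}$, and restriction to $r\le\tau^l$ gives (i). The case $l=1$ of (i) follows analogously by applying the proposition directly at the deterministic time $j=0$, with $\tau_0^{\star}=\tau^1$. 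Taking unions over $j_0$ delivers (i) and (ii) almost surely.

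For the converse, assume (i) and (ii) and fix deterministic $j\in\{0,\ldots,J\}$. Let $l_0\ge 1$ be the random index with $\tau^{l_0-1}<j\le\tau^{l_0}$, so that $\tau_j^{\star}=\tau^{l_0}$. Partition $\{j,\ldots,J\}$ into the blocks $\{j,\ldots,\tau^{l_0}\}$ and $\{\tau^l+1,\ldots,\tau^{l+1}\}$ for $l=l_0,\ldots,l_J-1$. By (i) applied to each enlarged block $\{\tau^{l-1}+1,\ldots,\tau^l\}$ (the first block being a subset of $\{\tau^{l_0-1}+1,\ldots,\tau^{l_0}\}$ and containing $\tau^{l_0}$), the pathwise maximum of $Z_r-M_r$ on the $l$-th block equals $Z_{\tau^l}-M_{\tau^l}$. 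Condition (ii), applied successively at $l=l_0+1,\ldots,l_J$, shows that $Z_{\tau^l}-M_{\tau^l}\le Z_{\tau^{l-1}}-M_{\tau^{l-1}}$, so the sequence $(Z_{\tau^l}-M_{\tau^l})_{l\ge l_0}$ is non-increasing. Hence $\max_{j\le r\le J}(Z_r-M_r)=Z_{\tau^{l_0}}-M_{\tau^{l_0}}=Z_{\tau_j^{\star}}-M_{\tau_j^{\star}}$ almost surely. Taking $\mathsf{E}_{\mathcal{F}_j}[\cdot]$ and invoking optional sampling for the bounded stopping time $\tau_j^{\star}$ then gives $\mathsf{E}_{\mathcal{F}_j}[\max_{j\le r\le J}(Z_r-M_r+M_j)]=\mathsf{E}_{\mathcal{F}_j}[Z_{\tau_j^{\star}}]=Y_j^{\star}$, i.e., $M\in\mathcal{M}^{\circ,j}$ for each $j$, and therefore $M\in\mathcal{M}^\circ$.

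The only real technical subtlety lies in the random indices $\tau^l$, since Proposition~\ref{prop:propM} is available only at deterministic starting times. The forward direction sidesteps this by the freeze-and-union argument described above, while the converse direction works entirely pathwise on the deterministic partition of $\{j,\ldots,J\}$, so the random times cause no complication before the single conditional expectation is taken at the end.
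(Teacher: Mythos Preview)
Your proof is correct and follows essentially the same route as the paper's: both directions hinge on Proposition~\ref{prop:propM}, with the forward implication handled by freezing $\tau^{l-1}$ on the $\mathcal{F}_{j_0}$-measurable events $\{\tau^{l-1}=j_0\}$ (the paper does this via an indicator decomposition $\sum_k 1_{\{\tau^{l-1}=k\}}$), and the converse handled by the block decomposition of $\{j,\ldots,J\}$ together with the monotonicity chain $Z_{\tau^l}-M_{\tau^l}\le Z_{\tau^{l-1}}-M_{\tau^{l-1}}$ from~(ii). One small point of phrasing: when you say ``the constant time $j_0$ is itself an optimal stopping time at $j=j_0$'', this is only true on the event $\{\tau^{l-1}=j_0\}$, not globally, so strictly speaking Proposition~\ref{prop:propM} does not apply to the constant $j_0$; the paper avoids this by using the canonical $\tau_{j_0}^{\star}$ (which equals $j_0$ on the event), and your argument goes through equally well once you either make this substitution or invoke the sandwich argument on the $\mathcal{F}_{j_0}$-measurable event directly.
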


The following lemma anticipates  sufficient conditions for a martingale \(M\) to be
optimal, that is, to be a member of $\mathcal{M}^{\circ}.$

\begin{lemma}
\label{charopt} Let $(\mathcal{S}_{i})_{0\leq i\leq J}$ be an adapted sequence
with $\mathcal{S}_{0}=0$ and consider the \textquotedblleft
shifted\textquotedblright\ Doob martingale
\[
M_{i}=M_{i}^{\star}-\mathcal{S}_{i},\text{ \ \ }0\leq i\leq J.
\]
Let $l_{i}\geq1$ be the unique number such that $\tau^{l_{i}-1}<i\leq
\tau^{l_{i}}$ for any $0\leq i\leq J.$ If $\mathcal{S}$ satisfies for all
$0\leq i\leq J,$
\begin{align}
\max_{\tau^{l_{i}-1}<r\leq i}\left(  Z_{r}-Y_{r}^{\star}+\mathcal{S}%
_{r}-\mathcal{S}_{i}\right)   &  \leq0 \label{c1}\\
Z_{\tau^{l_{i}-1}}-\mathsf{E}_{\mathcal{F}_{\tau^{l_{i}-1}}}\left[
Y_{\tau^{l_{i}-1}+1}^{\star}\right]  +\mathcal{S}_{\tau^{l_{i}-1}}%
-\mathcal{S}_{i}  &  \geq0,\label{c2}%
\end{align}
for $\tau^{l_{i}-1}<i\leq\tau^{l_{i}}$ and $l_{i}>1$,  then $M$ satisfies the identities (i)-(ii) in Lemma~\ref{lemopt}.
\end{lemma}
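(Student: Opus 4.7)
The plan is to substitute the Doob decomposition \eqref{Doob0} into $M_r = M_r^\star - \mathcal{S}_r$, analyse how the predictable part $A^\star$ behaves between consecutive optimal stopping times, and then read off the two identities by direct insertion of hypotheses \eqref{c1} and \eqref{c2}. Writing $M_r^\star = Y_r^\star - Y_0^\star + A_r^\star$ yields the working representation
\[
Z_r - M_r \;=\; Y_0^\star + (Z_r - Y_r^\star) + \mathcal{S}_r - A_r^\star,
\]
which I will use throughout the argument.

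The first substantive step I would carry out is to pin down $A^\star$ on each interval $(\tau^{l-1},\tau^l]$. From \eqref{AddMart}, the increment $A_{k+1}^\star - A_k^\star = Y_k^\star - \mathsf{E}_{\mathcal{F}_k}[Y_{k+1}^\star]$ vanishes whenever $\tau^{l-1} < k < \tau^l$ (strict continuation), while at $k = \tau^{l-1}$ with $l>1$ it equals $Z_{\tau^{l-1}} - \mathsf{E}_{\mathcal{F}_{\tau^{l-1}}}[Y_{\tau^{l-1}+1}^\star]$, using $Y_{\tau^{l-1}}^\star = Z_{\tau^{l-1}}$. Consequently, for $l > 1$ and $\tau^{l-1} < r \leq \tau^l$ one has $A_r^\star - A_{\tau^{l-1}}^\star = Z_{\tau^{l-1}} - \mathsf{E}_{\mathcal{F}_{\tau^{l-1}}}[Y_{\tau^{l-1}+1}^\star]$, whereas for $l = 1$ one simply has $A_r^\star \equiv 0$ on $\{0,\ldots,\tau^1\}$.

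For identity (i) I would fix $l \geq 1$ and form the difference at a generic $r\in(\tau^{l-1},\tau^l]$ and at $\tau^l$. Since $A^\star$ is constant on this range and $Z_{\tau^l} = Y_{\tau^l}^\star$, the representation collapses to $(Z_r - M_r) - (Z_{\tau^l} - M_{\tau^l}) = (Z_r - Y_r^\star) + \mathcal{S}_r - \mathcal{S}_{\tau^l}$, which is $\leq 0$ (and vanishes at $r = \tau^l$) by specialising \eqref{c1} to $i = \tau^l$. For identity (ii) with $l > 1$, the case $r = \tau^{l-1}$ is trivial; for $\tau^{l-1} < r \leq \tau^l$ substituting the jump formula for $A^\star$ turns the difference into
\[
(Z_r - M_r) - (Z_{\tau^{l-1}} - M_{\tau^{l-1}}) = (Z_r - Y_r^\star) + \bigl(\mathcal{S}_r - \mathcal{S}_{\tau^{l-1}}\bigr) - \bigl(Z_{\tau^{l-1}} - \mathsf{E}_{\mathcal{F}_{\tau^{l-1}}}[Y_{\tau^{l-1}+1}^\star]\bigr),
\]
which is non-positive because $Z_r \leq Y_r^\star$ (the Snell envelope dominates the reward) and because \eqref{c2} specialised to $i = r$ forces the last two bracketed quantities to sum to something non-positive.

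The only real obstacle will be the bookkeeping around $A^\star$: the whole argument hinges on identifying precisely which indices contribute a jump and which lie on a plateau of the compensator. Once this plateau/jump picture is made explicit, both identities reduce to direct substitutions into \eqref{c1} and \eqref{c2}, combined with the elementary facts $Z_r \leq Y_r^\star$ and $Y_{\tau^l}^\star = Z_{\tau^l}$ at optimal stopping indices.
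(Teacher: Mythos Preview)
Your argument is correct and follows essentially the same route as the paper: both reduce (i) and (ii) to the identities $(Z_r-M_r)-(Z_{\tau^l}-M_{\tau^l})=Z_r-Y_r^\star+\mathcal{S}_r-\mathcal{S}_{\tau^l}$ and $(Z_r-M_r)-(Z_{\tau^{l-1}}-M_{\tau^{l-1}})=Z_r-Y_r^\star+\mathcal{S}_r-\mathcal{S}_{\tau^{l-1}}-(Z_{\tau^{l-1}}-\mathsf{E}_{\mathcal{F}_{\tau^{l-1}}}[Y_{\tau^{l-1}+1}^\star])$, then apply \eqref{c1}, \eqref{c2} and $Z_r\le Y_r^\star$. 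The only cosmetic difference is that the paper obtains these identities by telescoping the martingale increments $\sum(Y_k^\star-\mathsf{E}_{\mathcal{F}_{k-1}}[Y_k^\star])$ directly, whereas you package the same cancellation via the compensator $A^\star$ being constant on each interval $(\tau^{l-1},\tau^l]$; the underlying use of $Y_k^\star=\mathsf{E}_{\mathcal{F}_k}[Y_{k+1}^\star]$ in the continuation region is identical.
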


\begin{corollary}
\label{eqco} Let us represent an (arbitrary) adapted $\mathcal{S}$ with
$\mathcal{S}_{0}=0$ by%
\begin{equation}
\mathcal{S}_{i+1}=\mathcal{S}_{i}+\zeta_{i+1},\text{
\ \ }0\leq i<J, \label{ds}%
\end{equation}
where each $\zeta_{i+1}$ is a $\mathcal{F}_{i+1}$-measurable random
variable. Then the conditions (\ref{c1}) and (\ref{c2}) are equivalent to the
following ones.
\begin{description}
\item [(i)]  On the $\mathcal{F}_{i}$-measurable event $\left\{  \tau^{l_{i}-1}%
<i<\tau^{l_{i}}\right\}  $ it holds that%
\begin{align}
\zeta_{i+1}  &  \geq\max_{\tau^{l_{i}-1}<r\leq i}\left(  Z_{r}-Y_{r}^{\star
}+\mathcal{S}_{r}-\mathcal{S}_{i}\right)  \text{ \ \ and}\label{imp}\\
\zeta_{i+1}  &  \leq Z_{\tau^{l_{i}-1}}-\mathsf{E}_{\mathcal{F}_{\tau
^{l_{i}-1}}}\left[  Y_{\tau^{l_{i}-1}+1}^{\star}\right]  +\mathcal{S}%
_{\tau^{l_{i}-1}}-\mathcal{S}_{i}\text{ \ for \ }l_{i}>1; \label{imm}%
\end{align}
\item [(ii)] On $\left\{  \tau^{l_{i}}=i\right\}  $ one has that%
\begin{equation}
\zeta_{i+1}\leq Z_{i}-\mathsf{E}_{\mathcal{F}_{i}}\left[  Y_{i+1}^{\star
}\right]  . \label{pp}%
\end{equation}
\end{description}
\end{corollary}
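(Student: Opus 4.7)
The plan is to prove the equivalence by establishing, for each $i \in \{0, \ldots, J-1\}$, a one-to-one correspondence between the block of conditions \eqref{c1}--\eqref{c2} at index $i+1$ and the pointwise bound on $\zeta_{i+1}$ stated in either (i) or (ii) of the corollary, depending on which of the two $\mathcal{F}_i$-measurable events $\{\tau^{l_i-1} < i < \tau^{l_i}\}$ and $\{\tau^{l_i} = i\}$ is realized. The key observation driving the reduction is that $\zeta_{i+1}$ enters \eqref{c1} and \eqref{c2} at index $i+1$ only through $\mathcal{S}_{i+1} = \mathcal{S}_i + \zeta_{i+1}$, so that the elementary identity $\mathcal{S}_r - \mathcal{S}_{i+1} = \mathcal{S}_r - \mathcal{S}_i - \zeta_{i+1}$ isolates $\zeta_{i+1}$ in every relevant inequality and allows a direct algebraic rearrangement.

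To execute this, I first check that \eqref{c1} and \eqref{c2} hold trivially at $i = 0$: the max in \eqref{c1} contains only the automatic term $Z_0 - Y_0^\star \leq 0$, while \eqref{c2} is vacuous because $l_0 = 1$. Next I pass from index $i$ to $i+1$ via a case split. On the event $\{\tau^{l_i-1} < i < \tau^{l_i}\}$, which forces $l_{i+1} = l_i$, I would peel off the automatic $r = i+1$ term from the maximum in \eqref{c1} at index $i+1$ and apply the substitution above; this rewrites \eqref{c1} at $i+1$ as precisely the lower bound \eqref{imp} on $\zeta_{i+1}$, and the same substitution turns \eqref{c2} at $i+1$ into the upper bound \eqref{imm}. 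On the complementary event $\{\tau^{l_i} = i\}$, one has $l_{i+1} = l_i + 1$ and $\tau^{l_{i+1}-1} = i$, so \eqref{c1} at $i+1$ degenerates to the automatic $Z_{i+1} - Y_{i+1}^\star \leq 0$, while \eqref{c2} at $i+1$ becomes exactly \eqref{pp}.

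The main obstacle I anticipate is the index bookkeeping, in particular tracking the shift $l_i \mapsto l_{i+1}$ and verifying that the $\mathcal{F}_i$-measurable partition $\{\tau^{l_i-1} < i < \tau^{l_i}\} \cup \{\tau^{l_i} = i\}$ indeed exhausts the relevant sample points (which follows from $\tau^{l_i-1} < i \leq \tau^{l_i}$ by the definition of $l_i$), so that the case-wise bounds on $\zeta_{i+1}$ genuinely cover every $\omega$. Once this is settled, both implications of the equivalence drop out simultaneously: letting $i$ range over $\{0, \ldots, J-1\}$, the algebraic rewriting identifies the family of constraints in \eqref{c1}--\eqref{c2} with the family of pointwise constraints \eqref{imp}--\eqref{pp}, and the claimed equivalence follows.
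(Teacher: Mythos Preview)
Your proposal is correct and follows essentially the same route as the paper's own proof: both arguments reduce \eqref{c1}--\eqref{c2} at the index $i+1$ (the paper writes this as index $j$ with $i=j-1$) to pointwise bounds on $\zeta_{i+1}$ via the identity $\mathcal{S}_r-\mathcal{S}_{i+1}=\mathcal{S}_r-\mathcal{S}_i-\zeta_{i+1}$, split according to whether $i$ is strictly between two consecutive $\tau^{l}$'s or equals one, and note that the $r=i+1$ term in the maximum and the $i=0$ base case are automatic. The paper spells out only the direction from (i)--(ii) to \eqref{c1}--\eqref{c2} and leaves the converse to the reader, whereas you observe (correctly) that the algebraic rewriting is a genuine equivalence at each step, so both directions come for free.
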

\begin{proof}
Indeed, take $j$ such that $\left\{  \tau^{l_{j}-1}<j\leq\tau^{l_{j}}\right\}
,$ $l_{j}\geq1.$ If $j-1>\tau^{l_{j}-1}$ then $l_{j-1}=l_{j}$ and (\ref{imp})
and (\ref{imm}) imply with $i=j-1$ via (\ref{ds}),%
\begin{align*}
0  &  \geq\max_{\tau^{l_{j}-1}<r\leq j-1}\left(  Z_{r}-Y_{r}^{\star
}+\mathcal{S}_{r}-\mathcal{S}_{j}\right)  \text{ \ \ and}\\
0  &  \leq Z_{\tau^{l_{j}-1}}-\mathsf{E}_{\mathcal{F}_{\tau^{l_{j}-1}}}\left[
Y_{\tau^{l_{j}-1}+1}^{\star}\right]  +\mathcal{S}_{\tau^{l_{j}-1}}%
-\mathcal{S}_{j}\text{ \ for \ }l_{j}>1,\text{\ }%
\end{align*}
respectively, which in turn imply (\ref{c1}) (note that $Z_{j}-Y_{j}^{\star
}\leq0$) and (\ref{c2}), respectively. Further if $j-1\ngtr\tau^{l_{j}-1}$ we
have to distinguish between $j=0\wedge l_{0}=1$ and $j=\tau^{l_{j}-1}+1\wedge
l_{j}>1.$ In both cases (\ref{c1}) is trivially fulfilled, while (\ref{c2}) is
void in the first case, and in the second case it reads,%
\[
0\leq Z_{\tau^{l_{j}-1}}-\mathsf{E}_{\mathcal{F}_{\tau^{l_{j}-1}}}[
Y_{\tau^{l_{j}-1}+1}^{\star}]  +\mathcal{S}_{\tau^{l_{j}-1}}%
-\mathcal{S}_{\tau^{l_{j}-1}+1},\text{ \ \ \ }l_{j}>1,
\]
which is implied by (\ref{ds}) and (\ref{pp}) for $i=j-1=\tau^{l_{i}}%
=\tau^{l_{j-1}}=\tau^{l_{j}-1}.$ The converse direction, that is from
(\ref{c1}) and (\ref{c2}) to (\ref{imp}), (\ref{imm}), (\ref{pp}), goes
similarly and is left to the reader.
\end{proof}
\begin{corollary}
\label{ifp} By Corollary~\ref{eqco} there always exists an adapted process $\mathcal{S}$
satisfying (\ref{imp}), (\ref{imm}), (\ref{pp}) with
$\mathsf{E}_{i}\left[  \zeta_{i+1}\right]  =0$ for $0\leq i<J$ due to
(\ref{c1}) and (\ref{c2}). Hence, there exist martingales $\mathcal{S}$ that
satisfy Lemma~\ref{charopt}. By Lemma~\ref{lemopt}, for any such martingale
$\mathcal{S},$ $M=M^{\star}-\mathcal{S}\in\mathcal{M}^{\circ},$ that is, $M$ is the
optimal martingale.
\end{corollary}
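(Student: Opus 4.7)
The plan is to establish existence by an inductive construction of the increments $\zeta_{i+1}$, and then to invoke the previous lemmas for the concluding step. Corollary~\ref{eqco} translates the sufficient conditions (\ref{c1})--(\ref{c2}) into an $\mathcal{F}_i$-measurable box constraint $L_i \le \zeta_{i+1} \le U_i$ on the appropriate event at each step. The central observation is that this box always contains~$0$; hence $\zeta_{i+1}\equiv 0$ is admissible, and more generally any $\mathcal{F}_{i+1}$-measurable random variable inside the box with $\mathsf{E}_{\mathcal{F}_i}[\zeta_{i+1}]=0$ will do, so the required centered sequence always exists.

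Concretely, I would construct $\mathcal{S}$ inductively starting from $\mathcal{S}_0=0$, simultaneously verifying $L_i\le 0\le U_i$. On $\{\tau^{l_i}=i\}$ only the upper bound (\ref{pp}) is active, and $Z_i-\mathsf{E}_{\mathcal{F}_i}[Y_{i+1}^\star]\ge 0$ since $i=\tau^{l_i}$ is by definition an optimal stopping time. On $\{\tau^{l_i-1}<i<\tau^{l_i}\}$ the lower bound $L_i$ equals the maximum in (\ref{imp}); the inductive hypothesis $\zeta_{r+1}\ge L_r$ at earlier steps $r$ of the current interval yields $Z_r-Y_r^\star+\mathcal{S}_r\le\mathcal{S}_{r+1}\le\mathcal{S}_i$ for all such $r$, which together with $Z_i-Y_i^\star\le 0$ gives $L_i\le 0$. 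For $l_i>1$, the upper bound $U_i$ in (\ref{imm}) is nonnegative by the symmetric telescoping argument that propagates $\mathcal{S}_{r+1}\le\mathcal{S}_{\tau^{l_i-1}}+Z_{\tau^{l_i-1}}-\mathsf{E}_{\mathcal{F}_{\tau^{l_i-1}}}[Y_{\tau^{l_i-1}+1}^\star]$, starting from the bound (\ref{pp}) applied at $r=\tau^{l_i-1}$, which itself is an optimal stopping time.

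The resulting $\mathcal{S}=(\mathcal{S}_i)$ is an $\mathcal{F}$-martingale with $\mathcal{S}_0=0$ whose increments satisfy (\ref{imp})--(\ref{pp}), hence (\ref{c1})--(\ref{c2}) by Corollary~\ref{eqco}. Lemma~\ref{charopt} then yields identities (i)--(ii) of Lemma~\ref{lemopt} for $M:=M^\star-\mathcal{S}$, and since $M$ is the difference of two martingales starting from $0$ it lies in $\mathcal{M}$, whence Lemma~\ref{lemopt} concludes $M\in\mathcal{M}^{\circ}$. The only mildly delicate bookkeeping, and the single place where the argument could trip, is at the boundary times $i=\tau^{l_i-1}+1$, where the interval index has just changed and one must check that the bounds $L_i,U_i$ reduce cleanly to $Z_i-Y_i^\star\le 0$ and to the residual form of (\ref{pp}) applied one step earlier; both signs then come out directly.
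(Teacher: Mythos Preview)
Your proposal is correct and in fact more detailed than what the paper supplies: the paper gives no separate proof for Corollary~\ref{ifp}, treating it as self-explanatory via the chain of references embedded in the statement itself. Your inductive verification that the box $[L_i,U_i]$ always contains~$0$ is precisely what the phrase ``due to (\ref{c1}) and (\ref{c2})'' is gesturing at, since (\ref{c1}) and (\ref{c2}) at step~$i$ are exactly the inequalities $L_i\le 0$ and $U_i\ge 0$, and by the forward direction of Corollary~\ref{eqco} they follow from the constraints already imposed on $\zeta_1,\ldots,\zeta_i$. One minor remark: the trivial instance $\zeta_{i+1}\equiv 0$ of your construction yields $\mathcal{S}\equiv 0$ and hence $M=M^\star$, which is already known to be surely optimal; the real content sits in your observation that \emph{any} centered $\zeta_{i+1}$ within the box is admissible, which is what the subsequent Theorem~\ref{cor:main} exploits.
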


Interestingly, the converse to Corollary~\ref{ifp} is also true and we so have
the following characterization theorem.

\begin{theorem}
\label{cor:main} It holds that $M\in\mathcal{M}^{\circ}$ if and only if
$M=M^{\star}-\mathcal{S},$ where $\mathcal{S}$ is a martingale with
$\mathcal{S}_{0}=0$ that satisfies (\ref{c1}) and (\ref{c2}) in
Lemma~\ref{charopt}.
\end{theorem}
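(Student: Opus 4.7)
The ``if'' direction is essentially packaged in the preceding results: given an adapted martingale $\mathcal{S}$ with $\mathcal{S}_0=0$ that satisfies (\ref{c1}) and (\ref{c2}), Lemma~\ref{charopt} guarantees that $M:=M^{\star}-\mathcal{S}$ (an adapted martingale starting at $0$) satisfies identities (i)--(ii) of Lemma~\ref{lemopt}, and then Lemma~\ref{lemopt} itself yields $M\in\mathcal{M}^{\circ}$. No new argument is needed here beyond invoking these two lemmas.

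The substance lies in the ``only if'' direction. The plan is: take $M\in\mathcal{M}^{\circ}$, set $\mathcal{S}:=M^{\star}-M$ (clearly an adapted martingale with $\mathcal{S}_0=0$), and reformulate both (\ref{c1}) and (\ref{c2}) by means of the Doob decomposition $Y_j^{\star}=Y_0^{\star}+M_j^{\star}-A_j^{\star}$ as pathwise inequalities in $Z-M$ versus $Y^{\star}-M$; these can then be read off from Lemma~\ref{lemopt}(i)--(ii).

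The key structural observation I will use is that the predictable increasing process $A^{\star}$ is constant on each block $[\tau^{l-1}+1,\tau^{l}]$: for $\tau^{l-1}<r-1<\tau^{l}$ the time $r-1$ lies strictly in the continuation region, so $Y_{r-1}^{\star}=\mathsf{E}_{\mathcal{F}_{r-1}}[Y_r^{\star}]$ and $A^{\star}$ does not increase; the only possible jump in this block is at $r=\tau^{l-1}+1$, of size $Z_{\tau^{l-1}}-\mathsf{E}_{\mathcal{F}_{\tau^{l-1}}}[Y_{\tau^{l-1}+1}^{\star}]$ (using $Y_{\tau^{l-1}}^{\star}=Z_{\tau^{l-1}}$). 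Substituting this together with $M_j^{\star}=Y_j^{\star}-Y_0^{\star}+A_j^{\star}$ into $\mathcal{S}_r-\mathcal{S}_i=(M_r^{\star}-M_i^{\star})-(M_r-M_i)$, one finds that for $\tau^{l_i-1}<r\leq i\leq\tau^{l_i}$ the left-hand side of (\ref{c1}) collapses to $(Z_r-M_r)-(Y_i^{\star}-M_i)$, while for the same range of $i$ (with $l_i>1$) the left-hand side of (\ref{c2}) collapses to $(Z_{\tau^{l_i-1}}-M_{\tau^{l_i-1}})-(Y_i^{\star}-M_i)$.

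The remaining two pathwise inequalities $Z_r-M_r\leq Y_i^{\star}-M_i$ and $Z_{\tau^{l_i-1}}-M_{\tau^{l_i-1}}\geq Y_i^{\star}-M_i$ then follow from Lemma~\ref{lemopt}: since $\tau_i^{\star}=\tau^{l_i}$ in the range in question, optional sampling combined with $Y_i^{\star}=\mathsf{E}_{\mathcal{F}_i}[Z_{\tau^{l_i}}]$ gives $Y_i^{\star}-M_i=\mathsf{E}_{\mathcal{F}_i}[Z_{\tau^{l_i}}-M_{\tau^{l_i}}]$; Lemma~\ref{lemopt}(i) supplies $Z_r-M_r\leq Z_{\tau^{l_i}}-M_{\tau^{l_i}}$ almost surely, and conditioning on $\mathcal{F}_i$ (the left side being $\mathcal{F}_r\subseteq\mathcal{F}_i$-measurable) yields the first inequality; Lemma~\ref{lemopt}(ii) evaluated at $r=\tau^{l_i}$ gives $Z_{\tau^{l_i}}-M_{\tau^{l_i}}\leq Z_{\tau^{l_i-1}}-M_{\tau^{l_i-1}}$, and conditioning on $\mathcal{F}_i$ (the right side being $\mathcal{F}_{\tau^{l_i-1}}\subseteq\mathcal{F}_i$-measurable) yields the second. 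The only real obstacle is the bookkeeping around the block structure of $A^{\star}$; once that is handled, both conditions collapse cleanly to consequences of Lemma~\ref{lemopt}.
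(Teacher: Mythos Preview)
Your proof is correct and takes a genuinely different route from the paper's. The key difference lies in how the ``only if'' direction is handled.

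The paper first uses Lemma~\ref{lemopt}(i)--(ii) to verify (\ref{c1}) and (\ref{c2}) only at the block endpoint $i=\tau^{l_i}$ (their identities (\ref{h1}) and (\ref{h8})), and then runs a backward induction inside each block $(\tau^{l_i-1},\tau^{l_i}]$, stepping from $i+1$ down to $i$ via the martingale increment identity $\mathsf{E}_{\mathcal{F}_i}[\zeta_{i+1}]=0$. This induction is the technical heart of the paper's argument and occupies most of the proof.

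Your approach avoids the induction entirely. By substituting the Doob decomposition and using the constancy of $A^{\star}$ on each block, you rewrite (\ref{c1}) and (\ref{c2}) as the clean pathwise inequalities $Z_r-M_r\leq Y_i^{\star}-M_i$ and $Z_{\tau^{l_i-1}}-M_{\tau^{l_i-1}}\geq Y_i^{\star}-M_i$. Then a single application of optional sampling, $Y_i^{\star}-M_i=\mathsf{E}_{\mathcal{F}_i}[Z_{\tau^{l_i}}-M_{\tau^{l_i}}]$, combined with the pathwise bounds from Lemma~\ref{lemopt}(i)--(ii), yields both inequalities in one shot. What the paper achieves via an iterated step-by-step conditioning (the backward induction), you collapse into one conditional expectation of the stopped quantity; this is more direct and arguably more transparent. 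The only care needed is that the relevant events (such as $\{\tau^{l_i-1}<r\}$ and $\{l_i>1\}$) and the quantities $Z_r-M_r$, $Z_{\tau^{l_i-1}}-M_{\tau^{l_i-1}}$ are indeed $\mathcal{F}_i$-measurable so they pull out of $\mathsf{E}_{\mathcal{F}_i}[\cdot]$; you note this, and it is straightforward to verify.
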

The proofs of Lemmas~\ref{lemopt}-\ref{charopt} and Theorem~\ref{cor:main} are
given in Section~\ref{SecProofs}.
In fact, Theorem~\ref{cor:main} reveals that, besides the Doob martingale,
there generally exists a large set of optimal martingales $M\in\mathcal{M}%
^{\circ}.$ From Theorem~\ref{cor:main} we also obtain a characterization of
the \emph{surely} optimal martingales which is essentially the older result in
\cite{J_SchZhaHua}, Thm.~6 (see Section~\ref{SecProofs} for the proof).

\begin{corollary}
\label{alms} It holds that $M\in\mathcal{M}^{\circ\circ}$ if and only if
$M=M^{\star}-\mathcal{S}$ with $\mathcal{S}$ represented by (\ref{ds}) with
all $\mathsf{E}_{\mathcal{F}_{i}}\left[  \zeta_{i+1}\right]  =0,$ $\zeta
_{i+1}$ satisfying (\ref{pp}) for $i=\tau^{l_{i}}$, and $\zeta_{i+1}=0$ for
$\tau^{l_{i}-1}<i<\tau^{l_{i}},$ $l_{i}\geq1.$
\end{corollary}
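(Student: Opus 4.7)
The plan is to build on Theorem~\ref{cor:main} and pinpoint the extra restriction on the perturbation $\mathcal{S}$ that upgrades weak to sure optimality. First I would substitute the decomposition $M=M^{\star}-\mathcal{S}$ into the right-hand side of (\ref{sure}) and use the Doob identity $Y^{\star}=Y_{0}^{\star}+M^{\star}-A^{\star}$ to obtain the algebraic rewriting
\[
Z_{r}-M_{r}+M_{j}=Y_{j}^{\star}+(Z_{r}-Y_{r}^{\star})-(A_{r}^{\star}-A_{j}^{\star})+(\mathcal{S}_{r}-\mathcal{S}_{j}),
\]
so that sure optimality at $j$ becomes the pointwise statement
\[
\max_{j\leq r\leq J}\bigl[(Z_{r}-Y_{r}^{\star})-(A_{r}^{\star}-A_{j}^{\star})+(\mathcal{S}_{r}-\mathcal{S}_{j})\bigr]=0\text{ a.s.}
\]
A direct inspection of (\ref{AddMart}) then shows that $A^{\star}$ is constant on each interval $(\tau^{l-1},\tau^{l}]$ and jumps at $\tau^{l}+1$ by exactly $Z_{\tau^{l}}-\mathsf{E}_{\mathcal{F}_{\tau^{l}}}[Y_{\tau^{l}+1}^{\star}]$. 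This structural remark underpins everything below.

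For necessity, I would fix $j$ with $\tau^{l_{j}-1}<j\leq\tau^{l_{j}}$ and substitute the anchor $r=\tau^{l_{j}}$ into the above maximum. Since $A^{\star}_{\tau^{l_{j}}}=A^{\star}_{j}$ and $Z_{\tau^{l_{j}}}=Y_{\tau^{l_{j}}}^{\star}$, the pointwise bound collapses to $\mathcal{S}_{\tau^{l_{j}}}-\mathcal{S}_{j}\leq 0$ almost surely. Because $\tau^{l_{j}}$ is a bounded stopping time with $\tau^{l_{j}}\geq j$, the optional sampling theorem applied to the martingale $\mathcal{S}$ gives $\mathsf{E}_{\mathcal{F}_{j}}[\mathcal{S}_{\tau^{l_{j}}}-\mathcal{S}_{j}]=0$, and combined with the sign constraint this forces $\mathcal{S}_{\tau^{l_{j}}}=\mathcal{S}_{j}$ almost surely. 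Running this argument for every deterministic $j$ shows, path by path, that $\mathcal{S}$ is constant on each interval $(\tau^{l-1},\tau^{l}]$, which is precisely the requirement $\zeta_{i+1}=0$ on $\{\tau^{l_{i}-1}<i<\tau^{l_{i}}\}$. The conditions $\mathsf{E}_{\mathcal{F}_{i}}[\zeta_{i+1}]=0$ and (\ref{pp}) at $i=\tau^{l_{i}}$ are then inherited from Theorem~\ref{cor:main} via Corollary~\ref{eqco}.

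For sufficiency, assume $\mathcal{S}$ satisfies the three listed conditions. Then $\mathcal{S}$ is piecewise constant on the intervals $(\tau^{l-1},\tau^{l}]$ and only jumps at $\tau^{l}+1$. Fix $j$ with $\tau^{l_{j}-1}<j\leq\tau^{l_{j}}$ and let $r\geq j$ lie in some $(\tau^{l_{r}-1},\tau^{l_{r}}]$ with $l_{r}\geq l_{j}$. If $l_{r}=l_{j}$ then the bracket inside the maximum equals $Z_{r}-Y_{r}^{\star}\leq 0$, with equality at $r=\tau^{l_{j}}$. If $l_{r}>l_{j}$ then the structural observation and the telescopings $A_{r}^{\star}-A_{j}^{\star}=\sum_{k=l_{j}}^{l_{r}-1}(Z_{\tau^{k}}-\mathsf{E}_{\mathcal{F}_{\tau^{k}}}[Y_{\tau^{k}+1}^{\star}])$ and $\mathcal{S}_{r}-\mathcal{S}_{j}=\sum_{k=l_{j}}^{l_{r}-1}\zeta_{\tau^{k}+1}$ rewrite the bracket as
\[
(Z_{r}-Y_{r}^{\star})-\sum_{k=l_{j}}^{l_{r}-1}\bigl(Z_{\tau^{k}}-\mathsf{E}_{\mathcal{F}_{\tau^{k}}}[Y_{\tau^{k}+1}^{\star}]-\zeta_{\tau^{k}+1}\bigr)\leq 0,
\]
each summand being nonnegative by (\ref{pp}). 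Hence the overall maximum equals zero (attained at $r=\tau^{l_{j}}$), and (\ref{sure}) holds pointwise.

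The main obstacle, and the conceptual heart of the argument, is the necessity part: weak optimality only yields equality in conditional expectation, whereas sure optimality demands a pointwise one. The trick is to choose the anchor $r=\tau^{l_{j}}$, which reduces the pointwise inequality to a single random variable with controllable sign, and then to upgrade it to an equality via the martingale property of $\mathcal{S}$ under optional sampling; the rest is bookkeeping on the intervals $(\tau^{l-1},\tau^{l}]$.
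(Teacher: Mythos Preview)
Your proof is correct and, for the necessity direction, essentially identical to the paper's: both anchor at $r=\tau_{j}^{\star}=\tau^{l_{j}}$, extract a one-sided bound on $\mathcal{S}_{\tau_{j}^{\star}}-\mathcal{S}_{j}$, and kill it via optional sampling (the paper's displayed inequality actually carries a sign typo; your $\leq 0$ is the right way round).

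For sufficiency the two arguments diverge slightly. The paper first feeds the hypotheses back into Theorem~\ref{cor:main} (via Corollary~\ref{eqco}) to obtain $M\in\mathcal{M}^{\circ}$, then invokes Proposition~\ref{prop:propM} to locate the argmax at $\tau_{i}^{\star}$, and finally uses $\mathcal{S}_{\tau_{i}^{\star}}=\mathcal{S}_{i}$ to convert the weak identity into (\ref{sure}). You instead bypass those results and verify (\ref{sure}) directly, telescoping $A_{r}^{\star}-A_{j}^{\star}$ and $\mathcal{S}_{r}-\mathcal{S}_{j}$ over the jump times $\tau^{k}$ and bounding each summand by (\ref{pp}). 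Your route is more self-contained and makes the role of (\ref{pp}) transparent at the cost of redoing a small piece of bookkeeping; the paper's route is shorter on the page because it reuses the machinery already in place.
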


In applications of dual optimal stopping, hence dual martingale minimization,
it is usually enough to find martingales $M$ that are ``close to'' surely
optimal ones, merely at some specific point in time $i$, that is, $M$ $\in$
$\mathcal{M}^{\circ\circ,i}$. Naturally, since $\mathcal{M}^{\circ,i}$
$\supset$ $\mathcal{M}^{\circ},$ we may expect that in general the family of
undesirable (not surely) optimal martingales at a specific time may be even
much larger than the family $\mathcal{M}^{\circ}$ characterized by
Theorem~\ref{cor:main}. A characterization of $\mathcal{M}^{\circ,i}$ and
$\mathcal{M}^{\circ\circ,i}$ is given by the next theorem, where we take $i=0$
without loss of generality. The proof is given in Section~\ref{SecProofs}.

\begin{theorem}
\label{i0}
The following statements hold.
\begin{description}
\item [(i)] $M=M^{\star}-\mathcal{S}\in\mathcal{M}^{\circ,0}$
for some martingale $\mathcal{S}$ represented by (\ref{ds}), if and only if%
\begin{align}
\max_{0\leq r<j}\left(  Z_{r}-Y_{r}^{\ast}-\mathcal{S}_{j}+\mathcal{S}%
_{r}\right)   &  \leq0\text{ \ \ for \ \ }0\leq j\leq\tau^{\star}\text{
\ \ and}\label{dd1}\\
\mathcal{S}_{j}-\mathcal{S}_{\tau^{\star}}  &  \leq Y_{j}^{\star}-Z_{j}%
+A_{j}^{\ast}\text{ \ \ for \ \ }\tau^{\star}<j\leq J, \label{dd2}%
\end{align}
where $A_{j}^{\ast}=0$ (see (\ref{Doob0})) for all $0\leq
j\leq\tau^{\star}.$
\item [(ii)] $M=M^{\star}-\mathcal{S}\in\mathcal{M}^{\circ\circ,0},$ if and only if
\begin{align}
\mathcal{S}_{j}  &  =0\ \ \text{\ for \ \ }0\leq j\leq\tau^{\star}, \label{df1}\\
\mathcal{S}_{j}  &  \leq Y_{j}^{\star}-Z_{j}+A_{j}^{\ast}\text{ \ \ for
\ \ }\tau^{\star}<j\leq J. \label{df2}%
\end{align}
\end{description}
\end{theorem}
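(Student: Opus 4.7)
The plan is to exploit Proposition~\ref{prop:propM} (for weak optimality) and the definition~(\ref{sure}) (for sure optimality) at $j = 0$, and transport both through the Doob decomposition $M_r^\star = Y_r^\star - Y_0^\star + A_r^\star$ to obtain conditions directly on the martingale perturbation $\mathcal{S}$. Throughout, with $\tau^\star = \tau^{1}$, I will repeatedly use the two elementary facts $Z_{\tau^\star} = Y_{\tau^\star}^\star$ and $A_r^\star = 0$ for $0 \leq r \leq \tau^\star$, both immediate from the definition of $\tau^{1}$ and the recursion for $A^\star$.

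For part (i), the central reduction is that substituting $M = M^\star - \mathcal{S}$ into the identity of Proposition~\ref{prop:propM} at $j = 0$ with $\tau_0^\star = \tau^\star$, and cancelling the constant $Y_0^\star$, makes $M \in \mathcal{M}^{\circ,0}$ equivalent to
\[
\max_{0 \leq r \leq J}\bigl(Z_r - Y_r^\star - A_r^\star + \mathcal{S}_r\bigr) \;=\; \mathcal{S}_{\tau^\star}\quad\text{a.s.,}
\]
i.e.\ to the pointwise bound $\mathcal{S}_r - \mathcal{S}_{\tau^\star} \leq Y_r^\star - Z_r + A_r^\star$ for every $r$, with equality at $r = \tau^\star$. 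On $r > \tau^\star$ this is precisely (\ref{dd2}). On $0 \leq r \leq \tau^\star$, using $A_r^\star = 0$, I will take conditional $\mathcal{F}_j$-expectations for $r < j \leq \tau^\star$; Doob's optional sampling theorem yields $\mathsf{E}_{\mathcal{F}_j}[\mathcal{S}_{\tau^\star}] = \mathcal{S}_j$ because $\tau^\star \geq j$ is a bounded stopping time, and the $\mathcal{F}_j$-measurability of $Y_r^\star$ and $Z_r$ then reproduces exactly (\ref{dd1}). For the converse, I evaluate (\ref{dd1}) pathwise at the random index $j = \tau^\star(\omega)$ to recover $\mathcal{S}_r - \mathcal{S}_{\tau^\star} \leq Y_r^\star - Z_r$ on $r < \tau^\star$ and combine with (\ref{dd2}) to reassemble the max-identity; Proposition~\ref{prop:propM} then closes the circle.

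For part (ii), the definition (\ref{sure}) at $j = 0$ reads $Y_0^\star = \max_r(Z_r - M_r)$ almost surely, which the same substitution rewrites as $\max_{0 \leq r \leq J}\bigl(\mathcal{S}_r + Z_r - Y_r^\star - A_r^\star\bigr) = 0$ a.s. This forces $\mathcal{S}_r \leq Y_r^\star - Z_r + A_r^\star$ for every $r$, which is (\ref{df2}) on $r > \tau^\star$ and gives the pointwise bound $\mathcal{S}_{\tau^\star} \leq 0$. Since $\mathsf{E}[\mathcal{S}_{\tau^\star}] = 0$ by optional sampling for the bounded stopping time $\tau^\star$, I conclude $\mathcal{S}_{\tau^\star} = 0$ a.s., whence $\mathcal{S}_j = \mathsf{E}_{\mathcal{F}_j}[\mathcal{S}_{\tau^\star}] = 0$ for $0 \leq j \leq \tau^\star$, which is (\ref{df1}). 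The converse is a direct pointwise verification: (\ref{df1}) and (\ref{df2}) make the maximand in the rewritten identity everywhere nonpositive, with equality attained at $r = \tau^\star$.

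The main obstacle is the forward direction of (i): the max-identity coming from Proposition~\ref{prop:propM} is a single pathwise statement involving the random index $\tau^\star$, whereas (\ref{dd1}) is an entire family of inequalities indexed by a free parameter $j$. Reconciling the two requires applying optional sampling in two opposite directions — conditional expectation at a deterministic $j$ for the forward implication, and pathwise substitution of the random $\tau^\star$ for the converse — and one has to verify that neither step loses information. That it does not is precisely what the martingale property of $\mathcal{S}$, together with the boundedness of $\tau^\star$, guarantees.
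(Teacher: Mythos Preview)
Your proposal is correct and follows the same overall architecture as the paper: rewrite the optimality condition from Proposition~\ref{prop:propM} (resp.\ (\ref{sure})) via the Doob decomposition $M_r^\star = Y_r^\star - Y_0^\star + A_r^\star$, split the resulting pathwise bound at $\tau^\star$, and use the martingale property of $\mathcal{S}$ to pass between the single inequality at $j=\tau^\star$ and the family (\ref{dd1}) indexed by $j$.

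Where you genuinely diverge is in the mechanism for that last passage. The paper proves the equivalence of (\ref{ccc1}) with (\ref{dd1}) by a backward induction in $j$: starting from $j=\tau^\star$ it subtracts $\zeta_j$, takes $\mathcal{F}_{j-1}$-conditional expectations, and steps down one index at a time. Likewise, for (ii) the paper shows $\mathcal{S}_j=0$ on $\{j\leq\tau^\star\}$ by introducing the stopped martingale $\widetilde{\mathcal{S}}_j=\mathcal{S}_{j\wedge\tau^\star}$ and peeling off increments backward from $J$. You instead collapse both inductions into a single application of optional sampling: on $\{j\leq\tau^\star\}$ one has $1_{\{\tau^\star\geq j\}}\,\mathsf{E}_{\mathcal{F}_j}[\mathcal{S}_{\tau^\star}] = 1_{\{\tau^\star\geq j\}}\,\mathcal{S}_j$ (via $\tau^\star\vee j$), which immediately gives (\ref{dd1}) in part~(i) and, once $\mathcal{S}_{\tau^\star}=0$ is established, gives (\ref{df1}) in part~(ii). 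This is shorter and arguably cleaner; the paper's inductions make the same conclusion transparent increment by increment but are not strictly necessary.

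One phrasing to tighten: the identity $\mathsf{E}_{\mathcal{F}_j}[\mathcal{S}_{\tau^\star}]=\mathcal{S}_j$ is not true unconditionally (since $\tau^\star<j$ is possible), only on the $\mathcal{F}_{j-1}$-measurable event $\{\tau^\star\geq j\}$. Since both (\ref{dd1}) and (\ref{df1}) are asserted precisely on that event, your argument is unaffected, but you should state the identity in its localized form and justify it via optional sampling applied to the bounded stopping time $\tau^\star\vee j$.
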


After dropping the nonnegative term $Y_{j}^{\star}-Z_{j}$ in the
right-hand-sides of (\ref{dd2}) and (\ref{df2}) we may obtain tractable
sufficient conditions for a martingale to be optimal or surely optimal at a
single date, respectively. In the spirit of Corollary~\ref{eqco} they may be
formulated in the following way.

\begin{corollary}
Let $M=M^{\star}-\mathcal{S}$ for some martingale $\mathcal{S}$ represented by
(\ref{ds}), then
\begin{description}
\item [(i)] $M\in\mathcal{M}^{\circ,0}$ if%
\begin{align}
\zeta_{j}\text{ }  &  \geq\max_{0\leq r<j}\left(  Z_{r}-Y_{r}^{\ast
}-\mathcal{S}_{j-1}+\mathcal{S}_{r}\right)  \text{\ \ \ for \ \ }1\leq
j\leq\tau^{\star}\text{ \ \ and}\nonumber\\
\zeta_{j}  &  \leq A_{j}^{\ast}+\mathcal{S}_{\tau^{\star}}-\mathcal{S}%
_{j-1}\text{ \ \ for \ \ }\tau^{\star}<j\leq J, \label{rhs}%
\end{align}
\item [(ii)] $M\in\mathcal{M}^{\circ\circ,0}$ if $\zeta_{j}=0$ for $0\leq j\leq
\tau^{\star},$ and
\begin{equation}
\zeta_{j}\leq A_{j}^{\ast}-\mathcal{S}_{j-1}\text{ \ \ for \ \ }\tau^{\star
}<j\leq J. \label{rhs1}%
\end{equation}
In particular, the right-hand-sides in (\ref{rhs}) and (\ref{rhs1}) are
$\mathcal{F}_{j-1}$-measurable.
\end{description}
\end{corollary}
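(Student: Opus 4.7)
My plan is to derive the corollary directly from Theorem~\ref{i0} by (a) rewriting its conditions on $\mathcal{S}$ in terms of the increments $\zeta_j$ via (\ref{ds}), and (b) using the pathwise inequality $Y_j^{\star} \geq Z_j$, valid for all $j$, to drop the nonnegative term $Y_j^{\star} - Z_j$ from the right-hand sides of (\ref{dd2}) and (\ref{df2}). Since dropping a nonnegative summand on a right-hand side tightens the bound, what I obtain is a sufficient (but generally strictly stronger) version of the conditions in Theorem~\ref{i0}, as required by a corollary giving ``if'' rather than ``if and only if''.

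For item (i), I would first substitute $\mathcal{S}_j = \mathcal{S}_{j-1} + \zeta_j$ into (\ref{dd1}): for $1 \leq j \leq \tau^{\star}$ the inequality becomes exactly the stated lower bound $\zeta_j \geq \max_{0 \leq r < j}\left(Z_r - Y_r^{\star} - \mathcal{S}_{j-1} + \mathcal{S}_r\right)$, while the case $j=0$ is vacuous. Next, for $\tau^{\star} < j \leq J$, I would rewrite the proposed upper bound $\zeta_j \leq A_j^{\star} + \mathcal{S}_{\tau^{\star}} - \mathcal{S}_{j-1}$ as $\mathcal{S}_j - \mathcal{S}_{\tau^{\star}} \leq A_j^{\star}$, and then enlarge the right-hand side to $A_j^{\star} + Y_j^{\star} - Z_j$ using $Y_j^{\star} \geq Z_j$; this yields (\ref{dd2}). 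Invoking Theorem~\ref{i0}(i) then gives $M \in \mathcal{M}^{\circ,0}$.

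For item (ii), the assumption $\zeta_j = 0$ on $0 \leq j \leq \tau^{\star}$ is just an incremental rewriting of (\ref{df1}) (recall $\mathcal{S}_0 = 0$); and on $\tau^{\star} < j \leq J$, the bound $\zeta_j \leq A_j^{\star} - \mathcal{S}_{j-1}$ amounts to $\mathcal{S}_j \leq A_j^{\star}$, which implies (\ref{df2}) again via $Y_j^{\star} \geq Z_j$. Theorem~\ref{i0}(ii) then concludes $M \in \mathcal{M}^{\circ\circ,0}$.

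Finally, I would verify the $\mathcal{F}_{j-1}$-measurability claim term by term: $A_j^{\star}$ is $\mathcal{F}_{j-1}$-measurable by predictability of the compensator in the Doob decomposition (see (\ref{AddMart})); the quantities $Z_r$, $Y_r^{\star}$ and $\mathcal{S}_r$ for $r \leq j-1$ are $\mathcal{F}_{j-1}$-measurable by adaptedness; and on the event $\{\tau^{\star} < j\}$ the optimal stopping time $\tau^{\star}$ takes values in $\{0,\ldots,j-1\}$, so that $\mathcal{S}_{\tau^{\star}}$ is $\mathcal{F}_{j-1}$-measurable on that event. No real obstacle arises beyond this bookkeeping; the only mildly delicate point is taking care that the measurability argument for $\mathcal{S}_{\tau^{\star}}$ uses the restriction to $\{\tau^{\star} < j\}$, which is precisely the range on which the bound (\ref{rhs}) is imposed.
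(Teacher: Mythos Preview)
Your proposal is correct and follows exactly the route indicated by the paper: the corollary is presented there without a separate proof, with the preamble explaining that one obtains it from Theorem~\ref{i0} by dropping the nonnegative term $Y_j^{\star}-Z_j$ from the right-hand sides of (\ref{dd2}) and (\ref{df2}) and then rewriting the conditions on $\mathcal{S}$ in terms of the increments $\zeta_j$ ``in the spirit of Corollary~\ref{eqco}''. Your additional verification of the $\mathcal{F}_{j-1}$-measurability of the right-hand sides, including the observation that $\mathcal{S}_{\tau^{\star}}$ is $\mathcal{F}_{j-1}$-measurable on $\{\tau^{\star}<j\}$, fills in a detail the paper only asserts.
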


\begin{remark}
While the class of optimal martingales $\mathcal{M}^{\circ,0}$ may be quite
large in general, it is still possible that it is just a singleton (containing
the Doob martingale only). For example, let the cash-flow $Z\geq0 $ be a
martingale itself, then it is easy to see that the only optimal martingale (at
$0$) is $M=M^{\star}=Z-Z_{0}$ (the proof is left as an easy exercise).
\end{remark}

\section{Randomized dual martingale representations}

\label{SecRan}

Let $(\Omega_{0},\mathcal{B})$ be some auxiliary measurable space that is
``rich enough''. Let us consider random variables on $\widetilde{\Omega
}:=\Omega\times\Omega_{0}$ that are measurable with respect to the $\sigma
$-field $\widetilde{\mathcal{F}}:=\sigma\left\{  F\times B:F\in\mathcal{F}%
,\text{ }B\in\mathcal{B}\right\}  .$ While abusing notation a bit,
$\mathcal{F} $ and $\mathcal{F}_{j}$ are identified with $\sigma\left\{
F\times\Omega_{0}:F\in\mathcal{F}\right\}  \subset\widetilde{\mathcal{F}}$ and
$\sigma\left\{  F\times\Omega_{0}:F\in\mathcal{F}_{j}\right\}  \subset
\widetilde{\mathcal{F}},$ respectively. Let further $\mathsf{P}$ be the given
\textquotedblleft primary\textquotedblright\ measure on $(\Omega
,\mathcal{F}),$ and $\widetilde{\mathsf{P}}$ be an extension of $\mathsf{P}$
to $(\widetilde{\Omega},\widetilde{\mathcal{F}})$ in the sense that%
\[
\widetilde{\mathsf{P}}\left(  \Omega_{0}\times F\right)  =\mathsf{P}\left(
F\right)  \text{ \ \ for all \ \ }F\in\mathcal{F}.
\]
In particular, if $X:\widetilde{\Omega}\rightarrow\mathbb{R}$ is $\mathcal{F}
$-measurable, then $\left\{  \left(  \omega,\omega_{0}\right)  :X\left(
\omega,\omega_{0}\right)  \leq x\right\}=$
\,
$ \left\{  \left(  \omega,\omega
_{0}\right)  :\omega\in F_{x}\right\}  $ for some $F_{x}\in\mathcal{F}$, that
is, $X$ does not depend on $\omega_{0}.$
We now introduce randomized or ``pseudo''
 martingales as random perturbations of $\mathcal{F}$-adapted martingales of
the form (\ref{ds}). Let $(\eta_{j})_{j\geq0}$ be random variables on $(
\widetilde{\Omega},\widetilde{\mathcal{F}},\widetilde{\mathsf{P}}) $ such that
$\widetilde{\mathsf{E}}_{\mathcal{F}}\left[  \eta_{j}\right]  =0$ for
$j=0,\ldots,J.$ Then
\begin{equation}
\widetilde{M}_{j}:=M_{j}-\eta_{j}=M_{j}^{\star}-\mathcal{S}_{j}-\eta_{j}
\label{ran0}%
\end{equation}
is said to be a pseudo martingale. As such, $\widetilde{M}$ is not an
$\mathcal{F}$-martingale but $\widetilde{\mathsf{E}}_{\mathcal{F}}%
[\widetilde{M}]$ is.
The results below on pseudo-martingales provide the key motivation for
randomized dual optimal stopping. All proofs in this section are deferred to
Section~\ref{SecProofs}.

\begin{proposition}
\label{ASP} For any $\widetilde{M}$ of the form (\ref{ran0}) one has the upper
estimate%
\begin{equation}
\widetilde{\mathsf{E}}\Bigl[ \max_{0\leq j\leq J}( Z_{j}-\widetilde{M}_{j})
\Bigr] \geq Y_{0}^{\star}. \label{uppb}%
\end{equation}
If $\mathcal{S}=0,$ that is,
\begin{equation}
\widetilde{M}_{j}=M_{j}^{\star}-\eta_{j} \label{ran}%
\end{equation}
and the random perturbations $\left(  \eta_{j}\right)  $ satisfy in addition
\begin{equation}
\eta_{j}\leq Y_{j}^{\star}-Z_{j}+A_{j}^{\star},\quad\widetilde{\mathsf{P}%
}-a.s.\quad j=0,\ldots,J, \label{asl}%
\end{equation}
with $(A_{j}^{\star})$ defined in \eqref{Doob0}, then one has the almost sure
identity%
\begin{equation}
Y_{0}^{\star}=\max_{0\leq j\leq J}( Z_{j}-\widetilde{M}_{j}) \quad
\widetilde{\mathsf{P}}\text{-a.s.} \label{as}%
\end{equation}
Moreover, for the first optimal stopping time $\tau^{\star}$ $:=$ $\tau
_{0}^{\star}$ (see (\ref{pol})) one must have that $\eta_{\tau^{\star}}=0 $
a.s., and if $\tau^{\star}$ is strict in the sense that
\[
Y_{\tau^{\star}}^{\star}-\mathsf{E}_{\mathcal{F}_{\tau^{\star}}}\left[
Y_{\tau^{\star}+1}^{\star}\right]  >0,
\]
then $j=\tau^{\star}$ is the only time $j$ where  $\eta_{j}=0.$
\end{proposition}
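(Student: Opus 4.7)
The plan is to prove the three assertions in turn, in each case exploiting the $\mathcal{F}$-measurability of the first optimal stopping time $\tau^\star$ combined with the Doob decomposition (\ref{Doob0}).

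For the lower bound (\ref{uppb}), I would apply optional sampling to the genuine $\mathcal{F}$-martingale $M = M^\star - \mathcal{S}$ at $\tau^\star$, yielding $\mathsf{E}[M_{\tau^\star}] = 0$. Because $\tau^\star$ is $\mathcal{F}$-measurable, the conditional mean-zero property $\widetilde{\mathsf{E}}_{\mathcal{F}}[\eta_j] = 0$ propagates via
\[
\widetilde{\mathsf{E}}[\eta_{\tau^\star}] \;=\; \sum_{j=0}^{J} \mathsf{E}\bigl[\mathbf{1}_{\{\tau^\star = j\}}\, \widetilde{\mathsf{E}}_{\mathcal{F}}[\eta_j]\bigr] \;=\; 0,
\]
hence $\widetilde{\mathsf{E}}[\widetilde{M}_{\tau^\star}] = 0$. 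Combined with the pointwise bound $\max_{j}(Z_j - \widetilde{M}_j) \geq Z_{\tau^\star} - \widetilde{M}_{\tau^\star}$ and $\mathsf{E}[Z_{\tau^\star}] = Y_0^\star$, this gives (\ref{uppb}). For the almost-sure identity with $\mathcal{S} = 0$, I would substitute $M_j^\star = Y_j^\star - Y_0^\star + A_j^\star$ into $Z_j - \widetilde{M}_j$ to obtain
\[
Z_j - \widetilde{M}_j \;=\; Y_0^\star \;-\; (Y_j^\star - Z_j + A_j^\star) \;+\; \eta_j,
\]
so that (\ref{asl}) immediately yields $Z_j - \widetilde{M}_j \leq Y_0^\star$ a.s.\ for every $j$, hence $\max_j(Z_j - \widetilde{M}_j) \leq Y_0^\star$ a.s.\ Since (\ref{uppb}) forces the expectation of this maximum to be at least $Y_0^\star$, a.s.\ equality follows.

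For the ``moreover'' part, I would evaluate (\ref{asl}) at $j = \tau^\star$: since $\tau^\star$ is the first optimal stopping time, $Z_{\tau^\star} = Y_{\tau^\star}^\star$ and $A_{\tau^\star}^\star = 0$ (the predictable part remains zero up to the first optimal stopping time, where $Y^\star$ still acts as a martingale). Thus the right-hand side of (\ref{asl}) collapses to $0$, forcing $\eta_{\tau^\star} \leq 0$ a.s.; combined with $\widetilde{\mathsf{E}}[\eta_{\tau^\star}] = 0$ already established, one concludes $\eta_{\tau^\star} = 0$ a.s. For the strict case, I would use $A_{\tau^\star + 1}^\star - A_{\tau^\star}^\star = Y_{\tau^\star}^\star - \mathsf{E}_{\mathcal{F}_{\tau^\star}}[Y_{\tau^\star+1}^\star] > 0$ from (\ref{AddMart}) to get $A_j^\star > 0$ for all $j > \tau^\star$ by monotonicity, while for $j < \tau^\star$ the Bellman principle gives $Y_j^\star = \mathsf{E}_{\mathcal{F}_j}[Y_{j+1}^\star] > Z_j$ strictly. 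Therefore the right-hand side of (\ref{asl}) is strictly positive at every $j \neq \tau^\star$, identifying $\tau^\star$ as the unique index at which the combination of the upper bound and the mean-zero property forces $\eta_j$ to vanish identically.

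The main subtlety, and the step that will require the most care, is the final uniqueness assertion, where ``$j = \tau^\star$ is the only time $j$ where $\eta_j = 0$'' has to be read as singling out $\tau^\star$ as the unique index at which the constraints (\ref{asl}) together with $\widetilde{\mathsf{E}}_{\mathcal{F}}[\eta_j] = 0$ actually \emph{force} $\eta_j$ to be almost surely zero; the strict positivity of the bound on both sides of $\tau^\star$ is precisely what permits nontrivial mean-zero perturbations at every other index.
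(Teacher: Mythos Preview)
Your proof is correct and, for the almost-sure identity (\ref{as}) and the ``moreover'' part, follows the paper's argument essentially verbatim: substitute the Doob decomposition to get $Z_j-\widetilde{M}_j=Y_0^\star-(Y_j^\star-Z_j+A_j^\star)+\eta_j$, bound above by $Y_0^\star$ via (\ref{asl}), and then sandwich against (\ref{uppb}); for the vanishing at $\tau^\star$ and the strictness claim you reproduce the paper's reasoning exactly, including the correct reading of the final assertion as ``$\tau^\star$ is the unique index at which the constraints \emph{force} $\eta_j=0$''.

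The one genuine deviation is in your proof of (\ref{uppb}). The paper conditions on $\mathcal{F}$ and pulls the conditional expectation inside the maximum via Jensen,
\[
\widetilde{\mathsf{E}}_{\mathcal{F}}\Bigl[\max_j(Z_j-M_j+\eta_j)\Bigr]\;\geq\;\max_j\bigl(Z_j-M_j+\widetilde{\mathsf{E}}_{\mathcal{F}}[\eta_j]\bigr)\;=\;\max_j(Z_j-M_j),
\]
and then invokes the standard dual inequality (\ref{2up}) for the genuine martingale $M=M^\star-\mathcal{S}$. You instead evaluate the pathwise maximum at the single index $\tau^\star$ and use optional sampling together with your observation $\widetilde{\mathsf{E}}[\eta_{\tau^\star}]=0$. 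Both routes are valid; the paper's has the advantage of reducing cleanly to the non-randomized dual bound (so it works uniformly for any $\mathcal{F}$-martingale $M$, optimal or not), whereas yours is more direct and avoids the Jensen step but is tied to the specific optimal stopping time. Either is acceptable here.
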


Due to the following theorem, any (weakly or surely) optimal non Doob martingale 
turns to a non optimal one in the sense that
\begin{equation}
\widetilde{\mathsf{E}}\Bigl[\max_{0\leq j\leq J}(Z_{j}-\widetilde{M}%
_{j})\Bigr]>Y_{0}^{\star} \label{asw}%
\end{equation}
after a particular \textquotedblleft optimal\textquotedblright\ randomization.

\begin{theorem}
\label{opran} Suppose that $M\in\mathcal{M}^{\circ,0}$ and let $(\eta_{j})$ be
a sequence of random variables as in Proposition~\ref{ASP}, given by%
\begin{equation}
\eta_{j}=\xi_{j}\left(  Y_{j}^{\star}-Z_{j}+A_{j}^{\star}\right)  ,\text{
\ \ }0\leq j\leq J, \label{etu}%
\end{equation}
where the $(\xi_{j})$ are assumed to be i.i.d. distributed on $(-\infty,1],$
independent of $\mathcal{F}$ with $\widetilde{\mathsf{E}}\left[  \xi
_{j}\right]  =0.$ It is further assumed that the r.v. $(\xi_{j})$ have a joint continuous
density $p$ supported on $(-\infty,1]$ with $p(1)>0$. As such the
randomizers (\ref{etu}) satisfy (\ref{asl}), and Proposition~\ref{ASP} thus
provides an upper bound (\ref{uppb}) due to the pseudo martingale
$\widetilde{M}=M-\eta.$ Now, for the randomized martingale $\widetilde{M}$ one has (\ref{asw}) if $M\ne M^{\star}$
with positive probability.
\end{theorem}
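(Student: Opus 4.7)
The plan is to write $M=M^{\star}-\mathcal{S}$ for some $\mathcal{F}$-martingale $\mathcal{S}$ with $\mathcal{S}_{0}=0$ and to reduce (\ref{asw}) to an almost-sure pathwise statement via the Doob decomposition (\ref{Doob0}). Setting $D_{j}:=Y_{j}^{\star}-Z_{j}+A_{j}^{\star}\geq 0$, a direct computation gives
\[
Z_{j}-\widetilde{M}_{j}=Y_{0}^{\star}+\mathcal{S}_{j}-(1-\xi_{j})D_{j},\qquad 0\leq j\leq J,
\]
and the decisive structural input is $D_{\tau^{\star}}=0$, which holds because $Z_{\tau^{\star}}=Y_{\tau^{\star}}^{\star}$ and $A^{\star}$ is still null at $\tau^{\star}$. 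In particular $Z_{\tau^{\star}}-\widetilde{M}_{\tau^{\star}}=Y_{0}^{\star}+\mathcal{S}_{\tau^{\star}}$, so setting
\[
R:=\max_{0\leq j\leq J}\bigl(\mathcal{S}_{j}-\mathcal{S}_{\tau^{\star}}-(1-\xi_{j})D_{j}\bigr),
\]
the choice $j=\tau^{\star}$ forces $R\geq 0$ almost surely.

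Since $\tau^{\star}$ is bounded by $J$ and $\mathcal{S}$ is a martingale with $\mathcal{S}_{0}=0$, optional sampling yields $\widetilde{\mathsf{E}}[\mathcal{S}_{\tau^{\star}}]=0$ and therefore
\[
\widetilde{\mathsf{E}}\Bigl[\max_{0\leq j\leq J}(Z_{j}-\widetilde{M}_{j})\Bigr]-Y_{0}^{\star}=\widetilde{\mathsf{E}}[R].
\]
Consequently (\ref{asw}) is equivalent to producing a $\widetilde{\mathsf{P}}$-positive measure set on which $R>0$.

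To locate such a set, I would first find a deterministic index $j^{\ast}$ with $\mathsf{P}(\mathcal{S}_{j^{\ast}}>\mathcal{S}_{\tau^{\star}})>0$. Assuming the contrary, $\mathcal{S}_{j}\leq\mathcal{S}_{\tau^{\star}}$ a.s.\ for every $j$; combined with $\mathsf{E}[\mathcal{S}_{j}]=\mathsf{E}[\mathcal{S}_{\tau^{\star}}]=0$ this forces a.s.\ equality for every $j$, and specializing to $j=0$ gives $\mathcal{S}_{\tau^{\star}}=0$ a.s., hence $\mathcal{S}\equiv 0$, contradicting $M\neq M^{\star}$.

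The remaining and main obstacle is to upgrade $\mathcal{S}_{j^{\ast}}>\mathcal{S}_{\tau^{\star}}$ to $R>0$ via the assumption $p(1)>0$. On the (possibly null) sub-event $\{\mathcal{S}_{j^{\ast}}>\mathcal{S}_{\tau^{\star}}\}\cap\{D_{j^{\ast}}=0\}$ one has $R>0$ trivially. On $E':=\{\mathcal{S}_{j^{\ast}}>\mathcal{S}_{\tau^{\star}}\}\cap\{D_{j^{\ast}}>0\}$ introduce the $\mathcal{F}$-measurable random threshold $\delta(\omega):=(\mathcal{S}_{j^{\ast}}-\mathcal{S}_{\tau^{\star}})/D_{j^{\ast}}>0$. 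Continuity of $p$ and $p(1)>0$ furnish some $\epsilon_{0}>0$ with $p\geq p(1)/2$ on $(1-\epsilon_{0},1]$, and the independence of $\xi_{j^{\ast}}$ from $\mathcal{F}$ combined with Fubini yields
\[
\widetilde{\mathsf{P}}\bigl(E'\cap\{\xi_{j^{\ast}}>1-\delta\}\bigr)\geq\tfrac{p(1)}{2}\int_{E'}\min(\delta,\epsilon_{0})\,d\mathsf{P}>0,
\]
and on this set $(1-\xi_{j^{\ast}})D_{j^{\ast}}<\mathcal{S}_{j^{\ast}}-\mathcal{S}_{\tau^{\star}}$, so $R>0$. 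The delicate aspect is that $\delta$ is random and not uniformly bounded below, so $p(1)>0$ cannot be invoked pointwise through a uniform estimate; it must be coupled with the independence of $\xi_{j^{\ast}}$ from $\mathcal{F}$ via Fubini, with the degenerate sub-event $\{D_{j^{\ast}}=0\}$ handled separately.
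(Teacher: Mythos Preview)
Your argument is correct and rests on the same structural pillars as the paper's proof: the Doob decomposition $Z_{j}-\widetilde{M}_{j}=Y_{0}^{\star}+\mathcal{S}_{j}-(1-\xi_{j})D_{j}$, the key fact $D_{\tau^{\star}}=0$, optional sampling for $\mathcal{S}_{\tau^{\star}}$, and the density assumption $p(1)>0$ to force the strict inequality. One cosmetic point: your final displayed integral is only strictly positive when $\mathsf{P}(E')>0$; if $\mathsf{P}(E')=0$ you must fall back on the sub-event $\{D_{j^{\ast}}=0\}$, which then carries the full mass of $\{\mathcal{S}_{j^{\ast}}>\mathcal{S}_{\tau^{\star}}\}$ and gives $R>0$ directly.

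The organization, however, differs in two respects worth noting. First, the paper argues by contrapositive (assumes equality and derives $\mathcal{S}\equiv 0$), treating each index $j$ separately through the $\epsilon$-sets $\mathcal{C}_{j}^{\epsilon}$ and $\mathcal{D}_{j}^{\epsilon}$; you argue directly by locating a single deterministic index $j^{\ast}$ and coupling the random threshold $\delta$ with $\xi_{j^{\ast}}$ via Fubini. This is a bit more economical. Second, the paper invokes the characterization Theorem~\ref{i0} (the inequalities (\ref{dd1}) and (\ref{dd2})) to control the lower range of $\mathcal{S}_{\tau^{\star}}-\mathcal{S}_{j}$ inside $\mathcal{C}_{j}^{\epsilon}$ and $\mathcal{D}_{j}^{\epsilon}$, whereas your argument never uses the hypothesis $M\in\mathcal{M}^{\circ,0}$ at all. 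In effect you have shown the stronger statement that \emph{any} martingale $M\neq M^{\star}$ becomes strictly suboptimal after this randomization, not just the optimal ones.
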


The following corollary states that an optimally randomized non Doob martingale in $\mathcal{M}^{\circ,0},$ which 
is thus  suboptimal in the sense of (\ref{asw}) due to the previous theorem, cannot have zero variance. The proof relies on 
Theorem~\ref{opran}.

\begin{corollary}
\label{pseudovar} Let $M\in\mathcal{M}^{\circ,0},$ $(\eta_{j})$ as in
Theorem~\ref{opran}, and $\widetilde{M}=M-\eta$. Then $\mathrm{Var}\bigl(
\max_{0\leq j\leq J}(Z_{j}-\widetilde{M}_{j})\bigr)  $ $=$ $0$ if and only if
$M=M^{\star}.$
\end{corollary}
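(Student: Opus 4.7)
My plan is to prove the two implications separately: the ``if'' direction will be a direct application of Proposition~\ref{ASP}, while the ``only if'' direction will combine Theorem~\ref{opran} with a convex-analysis argument that crucially exploits the assumption $p(1)>0$.

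For the ``if'' direction, when $M=M^{\star}$ one has $\mathcal{S}=0$; the randomizers defined in (\ref{etu}) satisfy (\ref{asl}) by construction, so Proposition~\ref{ASP} yields $\max_{0\le j\le J}(Z_{j}-\widetilde{M}_{j})=Y_{0}^{\star}$ $\widetilde{\mathsf{P}}$-almost surely. Since $Y_{0}^{\star}$ is deterministic, the variance vanishes.

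For the converse, I would argue by contradiction. Suppose $V:=\max_{0\le j\le J}(Z_{j}-\widetilde{M}_{j})$ equals a constant $c$ almost surely, yet $M\ne M^{\star}$ with positive probability. Theorem~\ref{opran} then gives $c=\widetilde{\mathsf{E}}[V]>Y_{0}^{\star}$, and the target contradiction is $c=Y_{0}^{\star}$. Writing
\[
V(\omega,\xi)=\max_{j}\bigl(a_{j}(\omega)+b_{j}(\omega)\,\xi_{j}\bigr),\qquad a_{j}:=Z_{j}-M_{j},\ b_{j}:=Y_{j}^{\star}-Z_{j}+A_{j}^{\star}\ge 0,
\]
I would combine a Fubini argument with the continuity of $V(\omega,\cdot)$ and the fact that $p>0$ on a one-sided neighbourhood of $(1,\ldots,1)$ (since $p$ is continuous and $p(1)>0$) to conclude that, for $\mathsf{P}$-almost every $\omega$, $V(\omega,\xi)=c$ on such a neighbourhood.

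The central step is a convexity/monotonicity argument: because $V(\omega,\cdot)$ is a convex, coordinatewise non-decreasing, piecewise affine function of $\xi$, its constancy on a one-sided neighbourhood of $(1,\ldots,1)$ forces $b_{j}(\omega)=0$ for every index $j$ attaining the maximum at $(1,\ldots,1)$; otherwise a simultaneous downward perturbation of the $\xi_{j}$'s for $j$ in the active set would strictly decrease $V$. Since $b_{j}=0$ is equivalent to $Y_{j}^{\star}=Z_{j}$ together with $A_{j}^{\star}=0$, the active index must coincide with the first optimal stopping time $\tau^{\star}$, so $c=a_{\tau^{\star}}=Z_{\tau^{\star}}-M_{\tau^{\star}}$ almost surely. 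Substituting $M=M^{\star}-\mathcal{S}$ and using the Doob decomposition (\ref{Doob0}) together with $Z_{\tau^{\star}}=Y_{\tau^{\star}}^{\star}$ and $A_{\tau^{\star}}^{\star}=0$ yields $\mathcal{S}_{\tau^{\star}}=c-Y_{0}^{\star}$, a deterministic constant. Since $\mathcal{S}$ is an $\mathcal{F}$-martingale with $\mathcal{S}_{0}=0$ and $\tau^{\star}\le J$ is bounded, optional sampling gives $\mathsf{E}[\mathcal{S}_{\tau^{\star}}]=0$, hence $c=Y_{0}^{\star}$, contradicting the strict inequality from Theorem~\ref{opran}.

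I expect the convexity step to be the main obstacle. One has to handle simultaneous ties at $(1,\ldots,1)$ (the perturbation must decrease several $\xi_{j}$'s at once) and the non-strict case in which several early times $j$ could satisfy $b_{j}=0$. Under strictness of $\tau^{\star}$ the identification of the active index as $\tau^{\star}$ is unique; in the general case one picks any active $j^{\star}$ with $b_{j^{\star}}=0$ along the consistent family (\ref{pol}) and runs the optional-sampling argument at the bounded stopping time $j^{\star}$ in place of $\tau^{\star}$, reaching the same conclusion.
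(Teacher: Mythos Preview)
Your ``if'' direction matches the paper exactly. The ``only if'' direction is essentially correct in outline but takes a substantially longer route than the paper, and the convexity step as you state it is not quite right.

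\medskip
\textbf{The gap in your convexity step.} The claim ``$b_{j}(\omega)=0$ for \emph{every} index $j$ attaining the maximum at $(1,\ldots,1)$'' is false, and your justification (simultaneous downward perturbation of the active coordinates strictly decreases $V$) fails precisely when some active indices already have $b_j=0$: those terms stay at $c$ and pin the maximum. A two-term counterexample is $a_0=1,\,b_0=0,\,a_1=0,\,b_1=1$, where $V\equiv 1$ on $(-\infty,1]^2$ and both indices are active at $(1,1)$. What \emph{does} follow from constancy on the box---and is all you need---is the existence of at least one index $j^{*}$ with $b_{j^{*}}=0$ and $a_{j^{*}}=c$: evaluate at $\xi=(1-\epsilon/2,\ldots,1-\epsilon/2)$, pick any maximizing $j^{*}$, and compare with the value at $(1,\ldots,1)$ to force $b_{j^{*}}\le 0$. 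Taking the first such $j^{*}$ gives a bounded stopping time, and your optional-sampling conclusion $\mathcal{S}_{j^{*}}=c-Y_0^{\star}$, hence $c=Y_0^{\star}$, then goes through. So the fix is easy, but the argument as written is incorrect.

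\medskip
\textbf{Comparison with the paper.} The paper avoids the Fubini/density/convexity machinery entirely. From $V\equiv c>0$ (after subtracting $Y_0^{\star}$) it simply uses $\xi_j\le 1$ and $b_j\ge 0$ to bound
\[
c=\max_{j}\bigl(\mathcal{S}_j+(\xi_j-1)b_j\bigr)\le \max_{j}\mathcal{S}_j\quad\text{a.s.},
\]
then applies optional sampling to the hitting time $\sigma=\inf\{j:\mathcal{S}_j\ge c\}$, giving $0=\mathsf{E}[\mathcal{S}_\sigma]\ge c>0$. This is a two-line contradiction that never invokes $p(1)>0$ directly (only through Theorem~\ref{opran}) and sidesteps all the tie-breaking and non-strictness issues you flag in your last paragraph. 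Your route works once repaired, and it does isolate the role of the density assumption more explicitly, but the paper's argument is both shorter and more robust.
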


\subsubsection*{Discussion}

Proposition~\ref{ASP} provides us with a remarkable freedom of perturbing the
Doob martingale randomly while (\ref{as}) remains true. The bottom line of
Theorem~\ref{opran} is that randomization under condition (\ref{asl}) of an
optimal, or even surely optimal, but \textit{non}-Doob martingale results in a
non optimal (pseudo) martingale, while any randomization of the Doob
martingale under (\ref{asl}) remains a surely optimal pseudo martingale. This
is an important feature, since in this way martingale candidates that are
optimal but not equal to the (surely optimal) Doob martingale can be sorted
out by randomization.

\section{Proofs}

\label{SecProofs}

\subsection{Proof of Lemma~\ref{convex}}

It is enough to show the convexity of $\mathcal{M}^{\circ,j}$ and
$\mathcal{M}^{\circ\circ,j}$ for any $j.$ For any $M,M^{\prime}\in
\mathcal{M}^{\circ,j}$ and $\theta\in(0,1)$ one has%
\begin{align*}
&  \mathsf{E}_{\mathcal{F}_{j}}\left[  \max_{j\leq r\leq J}\left(
Z_{r}-\left(  \theta M_{r}+(1-\theta)M_{r}^{\prime}\right)  \right)  +\theta
M_{j}+(1-\theta)M_{j}^{\prime}\right] \\
&  =\mathsf{E}\left[  \max_{j\leq r\leq J}\left(  \theta\left(  Z_{r}%
-M_{r}+M_{j}\right)  +(1-\theta)\left(  Z_{r}-M_{r}^{\prime}+M_{j}^{\prime
}\right)  \right)  \right] \\
&  \leq\theta\mathsf{E}\left[  \max_{j\leq r\leq J}\left(  Z_{r}-M_{r}%
+M_{j}\right)  \right]  +(1-\theta)\mathsf{E}\left[  \max_{j\leq r\leq
J}\left(  Z_{r}-M_{r}^{\prime}+M_{j}^{\prime}\right)  \right]  =Y_{j}^{\star}%
\end{align*}
while by (\ref{2up}),%
\[
\mathsf{E}_{\mathcal{F}_{j}}\left[  \max_{j\leq r\leq J}\left(  Z_{r}-\left(
\theta M_{r}+(1-\theta)M_{r}^{\prime}\right)  +\theta M_{j}+(1-\theta
)M_{j}^{\prime}\right)  \right]  \geq Y_{j}^{\star}.
\]
Similarly, for any $M,M^{\prime}\in\mathcal{M}^{\circ\circ,j}$ and $\theta
\in(0,1)$ we have%
\begin{align*}
&  \max_{j\leq r\leq J}\left(  Z_{r}-\left(  \theta M_{r}+(1-\theta
)M_{r}^{\prime}+\theta M_{j}+(1-\theta)M_{j}^{\prime}\right)  \right) \\
&  =\max_{j\leq r\leq J}\left(  \theta\left(  Z_{r}-M_{r}+M_{j}\right)
+(1-\theta)\left(  Z_{r}-M_{r}^{\prime}+M_{j}^{\prime}\right)  \right) \\
&  \leq\theta\max_{j\leq r\leq J}\left(  Z_{r}-M_{r}+M_{j}\right)
+(1-\theta)\max_{0\leq r\leq J}\left(  Z_{r}-M_{r}^{\prime}+M_{j}^{\prime
}\right)  =Y_{j}^{\star}%
\end{align*}
while by (\ref{2up}),%
\[
\mathsf{E}_{\mathcal{F}_{j}}\left[  \max_{j\leq r\leq J}\left(  Z_{r}-\left(
\theta M_{r}+(1-\theta)M_{r}^{\prime}\right)  +\theta M_{j}+(1-\theta
)M_{j}^{\prime}\right)  \right]  \geq Y_{j}^{\star}.
\]
In both cases the sandwich property completes.

\subsection{Proof of Lemma~\ref{lemopt}}

Suppose that $M$ is a martingale with $M_{0}=0$ such that Lemma~\ref{lemopt}%
-(i) and (ii) hold. Then (ii) implies for $q\geq1$ that%
\begin{equation}
Z_{\tau^{1}}-M_{\tau^{1}}\geq Z_{\tau^{2}}-M_{\tau^{2}}\geq...\geq Z_{\tau
^{q}}-M_{\tau^{q}} \label{ket}%
\end{equation}
Now take $0\leq i\leq J$ arbitrarily, and let $q_{i}\geq1$ be such that
$\tau^{q_{i}-1}<i\leq\tau^{q_{i}}$ (Note that $q_{i}$ is unique and
$\mathcal{F}_{i}$ measurable). Then due to Lemma~\ref{lemopt}-(i) and
(\ref{ket}),
\begin{align*}
\max_{i\leq r\leq J}(Z_{r}-M_{r})  &  =\max\left(  \max_{i\leq r\leq
\tau^{q_{i}}}(Z_{r}-M_{r}),\max_{q>q_{i}}\max_{\tau^{q-1}<r\leq\tau^{q}}%
(Z_{r}-M_{r})\right) \\
&  =\max\left(  Z_{\tau^{q_{i}}}-M_{\tau^{q_{i}}},\max_{q>q_{i}}\left(
Z_{\tau^{q}}-M_{\tau^{q}}\right)  \right) \\
&  =\max\left(  Z_{\tau^{q_{i}}}-M_{\tau^{q_{i}}},Z_{\tau^{q_{i}+1}}%
-M_{\tau^{q_{i}+1}}\right)  =Z_{\tau^{q_{i}}}-M_{\tau^{q_{i}}}.
\end{align*}
On the other hand, one has $\tau_{i}^{\star}=\tau^{q_{i}}$ (see (\ref{pol})).
Thus, by Proposition~\ref{prop:propM}, $M\in\mathcal{M}^{\circ,i}$ and hence
$M\in\mathcal{M}^{\circ}$ since $i$ was taken arbitrarily.

Conversely, suppose that $M\in\mathcal{M}^{\circ}.$ So for any $0\leq i\leq
J,$%
\[
\max_{i\leq r\leq J}(Z_{r}-M_{r})=Z_{\tau_{i}^{\star}}-M_{\tau_{i}^{\star}}%
\]
by Proposition~\ref{prop:propM}. For $l=1$ one thus has%
\[
\max_{\tau^{0}<r\leq J}(Z_{r}-M_{r})=\max_{0\leq r\leq J}(Z_{r}-M_{r}%
)=Z_{\tau_{0}^{\star}}-M_{\tau_{0}^{\star}}=Z_{\tau^{1}}-M_{\tau^{1}}%
\]
and for $l>1$ it holds that%
\begin{align*}
\max_{\tau^{l-1}<r\leq J}(Z_{r}-M_{r})  &  =\sum_{k=0}^{J-1}1_{\{\tau
^{l-1}=k\}}\max_{k+1\leq r\leq J}(Z_{r}-M_{r}) \\
& =\sum_{k=0}^{J-1}1_{\{\tau
^{l-1}=k\}}\left(  Z_{\tau_{k+1}^{\star}}-M_{\tau_{k+1}^{\star}}\right) \\
&  =\sum_{k=0}^{J-1}1_{\{\tau^{l-1}=k\}}\left(  Z_{\tau^{l}}-M_{\tau^{l}%
}\right)  =Z_{\tau^{l}}-M_{\tau^{l}}.
\end{align*}
That is, (i) is shown. Next, for any $l>1$ it holds%
\begin{align*}
\max_{\tau^{l-1}\leq r\leq J}(Z_{r}-M_{r})  &  =\sum_{k=0}^{L}1_{\{\tau
^{l-1}=k\}}\max_{k\leq r\leq J}(Z_{r}-M_{r})\\
&  =\sum_{k=0}^{L}1_{\{\tau^{l-1}=k\}}\left(  Z_{\tau_{k}^{\star}}-M_{\tau
_{k}^{\star}}\right) \\
&  =Z_{\tau_{\tau^{l-1}}^{\star}}-M_{\tau_{\tau^{l-1}}^{\star}}=Z_{\tau^{l-1}%
}-M_{\tau^{l-1}}%
\end{align*}
which implies (ii).


\subsection{Proof of Lemma~\ref{charopt}}

Assume that $\mathcal{S}$ is adapted with $\mathcal{S}_{0}=0$ and that
$\mathcal{S}$ satisfies (\ref{c1}) and (\ref{c2}). For $l>1$ and $\tau
^{l-1}<r\leq\tau^{l}$ we may write,
\begin{align}
Z_{r}-M_{r}  &  =Z_{r}-M_{r}^{\star}+\mathcal{S}_{r}\label{lm1}\\
&  =Z_{r}-M_{\tau^{l-1}}^{\star}+M_{\tau^{l-1}}^{\star}-M_{r}^{\star
}+\mathcal{S}_{r}\nonumber\\
&  =Z_{r}-M_{\tau^{l-1}}^{\star}+\mathcal{S}_{r}-\sum_{k=\tau^{l-1}+1}%
^{r}\left(  Y_{k}^{\star}-\mathsf{E}_{\mathcal{F}_{k-1}}\left[  Y_{k}^{\star
}\right]  \right) \nonumber\\
&  =Z_{r}-M_{\tau^{l-1}}^{\star}+\mathcal{S}_{r}\nonumber\\
&  -\sum_{k=\tau^{l-1}+1}^{r}Y_{k}^{\star}+\sum_{k=\tau^{l-1}+1}%
^{r-1}\mathsf{E}_{\mathcal{F}_{k}}\left[  Y_{k+1}^{\star}\right]
+\mathsf{E}_{\mathcal{F}_{\tau^{l-1}}}\left[  Y_{\tau^{l-1}+1}^{\star}\right]
\nonumber\\
&  =Z_{r}-Y_{r}^{\star}-M_{\tau^{l-1}}^{\star}+\mathsf{E}_{\mathcal{F}%
_{\tau^{l-1}}}\left[  Y_{\tau^{l-1}+1}^{\star}\right]  +\mathcal{S}%
_{r}.\nonumber
\end{align}
By taking $r=\tau^{l}$ in (\ref{lm1}) and using $Z_{\tau^{l}}=Y_{\tau^{l}%
}^{\star}$ we then get%
\[
Z_{\tau^{l}}-M_{\tau^{l}}=-M_{\tau^{l-1}}^{\star}+\mathsf{E}_{\mathcal{F}%
_{\tau^{l-1}}}\left[  Y_{\tau^{l-1}+1}^{\star}\right]  +\mathcal{S}_{\tau^{l}}%
\]
and thus%
\[
Z_{r}-M_{r}=Z_{\tau^{l}}-M_{\tau^{l}}+Z_{r}-Y_{r}^{\star}+\mathcal{S}%
_{r}-\mathcal{S}_{\tau^{l}},\text{ \ \ }\tau^{l-1}<r\leq\tau^{l}.
\]
So from (\ref{c1}) we obtain with $i=\tau^{l},$ $l_{i}-1=l-1,$%
\[
Z_{r}-M_{r}\leq Z_{\tau^{l}}-M_{\tau^{l}}\text{ \ \ for \ \ }\tau^{l-1}%
<r\leq\tau^{l},
\]
i.e. Lemma~\ref{lemopt}-(i) for $l>1.$ If $l=1$ and $\tau^{1}=0,$
Lemma~\ref{lemopt}-(i) is trivially fulfilled. So let us consider $l=1$ and
$\tau^{1}>0.$ Analogously, we then may write for $\tau^{0}=0^{-}<0<r\leq
\tau^{1},$%
\begin{align}
Z_{r}-M_{r}  &  =Z_{r}-M_{r}^{\star}+\mathcal{S}_{r}=Z_{r}+\mathcal{S}%
_{r}-\sum_{k=1}^{r}\left(  Y_{k}^{\star}-\mathsf{E}_{\mathcal{F}_{k-1}}\left[
Y_{k}^{\star}\right]  \right) \nonumber\\
=Z_{r}+\mathcal{S}_{r}  &  -\sum_{k=1}^{r}Y_{k}^{\star}+\sum_{k=1}%
^{r-1}\mathsf{E}_{\mathcal{F}_{k}}\left[  Y_{k+1}^{\star}\right]
+\mathsf{E}_{\mathcal{F}_{\tau^{l-1}}}\left[  Y_{\tau^{l-1}+1}^{\star}\right]
\nonumber\\
&  =Z_{r}-Y_{r}^{\star}+\mathsf{E}_{\mathcal{F}_{0}}\left[  Y_{1}^{\star
}\right]  +\mathcal{S}_{r}. \label{im2}%
\end{align}
It is easy to see that (\ref{im2}) is also valid for $r=0,$ due to our
assumption $\tau^{1}>0.$ Thus, for $l=1$ and taking $r=\tau^{1}>0,$ we get
from (\ref{im2}),
\[
Z_{\tau^{1}}-M_{\tau^{1}}=\mathsf{E}_{\mathcal{F}_{0}}\left[  Y_{1}^{\star
}\right]  +\mathcal{S}_{\tau^{1}},
\]
whence (\ref{im2}) implies for $\tau^{0}=0^{-}<r\leq\tau^{1}$
\[
Z_{r}-M_{r}=Z_{r}-Y_{r}^{\star}+Z_{\tau^{1}}-M_{\tau^{1}}\leq Z_{\tau^{1}%
}-M_{\tau^{1}},
\]
that is Lemma~\ref{lemopt}-(i) holds also for $l=1.$

Let us now consider (ii) and take $l>1.$ Now for $\tau^{l-1}<r\leq\tau^{l}$
(\ref{lm1}) implies with $M_{\tau^{l-1}}^{\star}=$ $\mathcal{S}_{\tau^{l-1}%
}+M_{\tau^{l-1}},$%
\begin{equation}
Z_{r}-M_{r}=Z_{\tau^{l-1}}-M_{\tau^{l-1}}+Z_{r}-Y_{r}^{\star}+\mathsf{E}%
_{\mathcal{F}_{\tau^{l-1}}}\left[  Y_{\tau^{l-1}+1}^{\star}\right]
-Z_{\tau^{l-1}}+\mathcal{S}_{r}-\mathcal{S}_{\tau^{l-1}}. \label{tm}%
\end{equation}
Hence, since always $Z_{r}\leq Y_{r}^{\star},$ (\ref{c2}) implies for
$\tau^{l-1}<r\leq\tau^{l},$%
\begin{equation}
Z_{r}-M_{r}\leq Z_{\tau^{l-1}}-M_{\tau^{l-1}},\text{ \ \ }\tau^{l-1}<r\leq
\tau^{l}, \label{in00}%
\end{equation}
i.e. Lemma~\ref{lemopt}-(ii) is proved.


\subsection{Proof of Theorem~\ref{cor:main}}

If $M=M^{\star}-\mathcal{S}$, where $\mathcal{S}$ is a martingale with
$\mathcal{S}_{0}=0$ that satisfies (\ref{c1}) and (\ref{c2}) in
Lemma~\ref{charopt} then $M\in\mathcal{M}^{\circ}$ due to Corollary~\ref{ifp}.

Let us now consider the converse and assume that $M=M^{\star}-\mathcal{S}%
\in\mathcal{M}^{\circ}$ with $M_{0}=\mathcal{S}_{0}=0.$ Then $\mathcal{S}$ is
adapted and may be written in the form (\ref{ds}) where the $\zeta_{i+1}$ are
$\mathcal{F}_{i+1}$-measurable and $\mathsf{E}_{\mathcal{F}_{i}}\left[
\zeta_{i+1}\right]  =0$ for $0\leq$ $i<J.$ Since $M\in\mathcal{M}^{\circ}$
Lemma~\ref{lemopt}-(i) implies that for $l\geq1,$%
\begin{align}
\max_{\tau^{l-1}<r\leq\tau^{l}}(Z_{r}-Z_{\tau^{l}}+M_{\tau^{l}}^{\star}%
-M_{r}^{\star}+\mathcal{S}_{r}-\mathcal{S}_{\tau^{l}})  &  =0,\text{
\ \ hence}\nonumber\\
\max_{\tau^{l-1}<r\leq\tau^{l}}(Z_{r}-Y_{r}^{\ast}+\mathcal{S}_{r}%
-\mathcal{S}_{\tau^{l}})  &  =0 \label{h1}%
\end{align}
since for each $r$ with $\tau^{l-1}<r\leq\tau^{l}$ one has $Z_{\tau^{l}%
}-M_{\tau^{l}}^{\star}+M_{r}^{\star}=Z_{\tau_{r}^{\star}}-M_{\tau_{r}^{\star}%
}^{\star}+M_{r}^{\star}=Y_{r}^{\ast}$ because $M^{\star}\in\mathcal{M}%
^{\circ\circ}.$ We now show for any $i$ with $\tau^{l-1}<i\leq\tau^{l}$ that
(\ref{c1}) holds with $l_{i}=l$ by backward induction. For $i=\tau^{l_{i}}$ it
follows from (\ref{h1}). Now suppose that for some $i$ with $\tau^{l_{i}%
-1}<i<i+1\leq\tau^{l_{i}}$ it holds that%
\begin{equation}
1_{\left\{  \tau^{l_{i+1}-1}<i+1\leq\tau^{l_{i+1}}\right\}  }\max
_{\tau^{l_{i+1}-1}<r\leq i+1}(Z_{r}-Y_{r}^{\ast}+\mathcal{S}_{r}%
-\mathcal{S}_{i+1})\leq0. \label{hin}%
\end{equation}
One has by construction%

\[
\max_{\tau^{l_{i}-1}<r\leq i}(Z_{r}-Y_{r}^{\ast}+\mathcal{S}_{r}%
-\mathcal{S}_{i})=\zeta_{i+1}+\max_{\tau^{l_{i}-1}<r\leq i}(Z_{r}-Y_{r}^{\ast
}+\mathcal{S}_{r}-\mathcal{S}_{i+1}).
\]
Hence, since $\left\{  \tau^{l_{i}-1}<i<\tau^{l_{i}}\right\}  =\left\{
\tau^{l_{i}-1}<i\right\}  \cap\left\{  \tau^{l_{i}-1}<i+1\leq\tau^{l_{i}%
}\right\}  $ with $\left\{  \tau^{l_{i}-1}<i\right\}  \in\mathcal{F}_{i}$ and
$\left\{  \tau^{l_{i}-1}<i+1\leq\tau^{l}\right\}  \in\mathcal{F}_{i}$ (!),
$\mathsf{E}_{\mathcal{F}_{i}}\left[  \zeta_{i+1}\right]  =0,$ $l_{i}=l_{i+1},$
and taking $\mathcal{F}_{i}$-conditional expectations,
\begin{multline*}
1_{\left\{  \tau^{l_{i}-1}<i<\tau^{l_{i}}\right\}  }\max_{\tau^{l-1}<r\leq
i}(Z_{r}-Y_{r}^{\ast}+\mathcal{S}_{r}-\mathcal{S}_{i})\\
=1_{\left\{  \tau^{l_{i}-1}<i\right\}  }\mathsf{E}_{\mathcal{F}_{i}}\left[
\max_{\tau^{l_{i}-1}<r\leq i}(Z_{r}-Y_{r}^{\ast}+\mathcal{S}_{r}%
-\mathcal{S}_{i+1})1_{\left\{  \tau^{l_{i}-1}<i+1\leq\tau^{l_{i}}\right\}
}\right] \\
\leq1_{\left\{  \tau^{l_{i}-1}<i\right\}  }\mathsf{E}_{\mathcal{F}_{i}}\left[
\max_{\tau^{l_{i+1}-1}<r\leq i+1}(Z_{r}-Y_{r}^{\ast}+\mathcal{S}%
_{r}-\mathcal{S}_{i+1})1_{\left\{  \tau^{l_{i+1}-1}<i+1\leq\tau^{l_{i+1}%
}\right\}  }\right]  \leq0,
\end{multline*}
using the induction hypothesis (\ref{hin}). In view of (\ref{h1}) it follows
that (\ref{c1}) holds for $\tau^{l_{i}-1}<i\leq\tau^{l_{i}}.$

Next, on the other hand, $M\in\mathcal{M}^{\circ}$ implies by
Lemma~\ref{lemopt}-(ii) that for any fixed $l>1$,%
\begin{align}
\max_{\tau^{l-1}\leq r\leq\tau^{l}}(Z_{r}-M_{r}^{\star}+\mathcal{S}_{r})  &
=Z_{\tau^{l-1}}-M_{\tau^{l-1}}^{\star}+\mathcal{S}_{\tau^{l-1}},\text{
\ \ hence}\nonumber\\
\max_{\tau^{l-1}<r\leq\tau^{l}}(Z_{r}-Z_{\tau^{l-1}}+M_{\tau^{l-1}}^{\star
}-M_{r}^{\star}+\mathcal{S}_{r}-\mathcal{S}_{\tau^{l-1}})  &  =0. \label{h4}%
\end{align}
Suppose that $\tau^{l-1}<i\leq\tau^{l}$ and hence $l_{i}=l.$ Then (\ref{h4})
implies by (\ref{ds}) after a few manipulations,%
\begin{gather*}
Z_{i}-Z_{\tau^{l_{i}-1}}+M_{\tau^{l_{i}-1}}^{\star}-M_{i}^{\star}%
+\mathcal{S}_{i}-\mathcal{S}_{\tau^{l_{i}-1}}\\
=\zeta_{\tau^{l_{i}-1}+1}+\mathsf{E}_{\mathcal{F}_{\tau^{l_{i}-1}}}\left[
Y_{\tau^{l_{i}-1}+1}^{\star}\right]  -Z_{\tau^{l_{i}-1}}+Z_{i}-Y_{i}^{\star}\\
+\sum_{r=\tau^{l_{i}-1}+1}^{i-1}\zeta_{r+1}+\sum_{r=\tau^{l_{i}-1}+1}%
^{i-1}\mathsf{E}_{\mathcal{F}_{r}}\left[  Y_{r+1}^{\star}\right]
-\sum_{r=\tau^{l_{i}-1}+1}^{i-1}Y_{r}^{\star}\leq0
\end{gather*}
with the usual convention $\sum_{r=p}^{p-1}:=0.$ Thus, either the last three
sums are zero due to $i=\tau^{l_{i}-1}+1,$ or we may use that $Y_{r}^{\star
}=\mathsf{E}_{\mathcal{F}_{r}}\left[  Y_{r+1}^{\star}\right]  $ for
$\tau^{l_{i}-1}<r<i.$ We thus get for $\tau^{l-1}<i\leq\tau^{l},$%
\begin{equation}
\zeta_{\tau^{l_{i}-1}+1}+\mathsf{E}_{\tau^{l_{i}-1}}\left[  Y_{\tau^{l_{i}%
-1}+1}^{\star}\right]  -Z_{\tau^{l_{i}-1}}+Z_{i}-Y_{i}^{\star}+\mathcal{S}%
_{i}-\mathcal{S}_{\tau^{l_{i}-1}+1}\leq0. \label{h5}%
\end{equation}
In particular, due to $Z_{\tau^{l}}=Y_{\tau^{l}}^{\star},$ for $i=\tau^{l}$
this gives%
\begin{equation}
\zeta_{\tau^{l_{i}-1}+1}+\mathsf{E}_{\mathcal{F}_{\tau^{l_{i}-1}}}\left[
Y_{\tau^{l_{i}-1}+1}^{\star}\right]  -Z_{\tau^{l_{i}-1}}+\mathcal{S}%
_{\tau^{l_{i}}}-\mathcal{S}_{\tau^{l_{i}-1}+1}\leq0. \label{h8}%
\end{equation}
Let us now show that (\ref{c2}) holds for $\tau^{l_{i}-1}<i\leq\tau^{l_{i}}$
and $l_{i}>1$\bigskip\ by backward induction. For $i=\tau^{l_{i}}$ it follows
from (\ref{h8}) by $\zeta_{\tau^{l_{i}-1}+1}-\mathcal{S}_{\tau^{l_{i}-1}%
+1}=-\mathcal{S}_{\tau^{l_{i}-1}}$ that%
\[
Z_{\tau^{l_{i}-1}}-\mathsf{E}_{\mathcal{F}_{\tau^{l_{i}-1}}}\left[
Y_{\tau^{l_{i}-1}+1}^{\star}\right]  +\mathcal{S}_{\tau^{l_{i}-1}}%
-\mathcal{S}_{\tau^{l_{i}}}\geq0
\]
that is (\ref{c2}) for $i=\tau^{l_{i}}.$ Now suppose that for some $i$ with
$\tau^{l_{i}-1}<i<i+1\leq\tau^{l_{i}}$ it holds that%
\[
1_{\left\{  \tau^{l_{i+1}-1}<i+1\leq\tau^{l_{i+1}}\right\}  }\left(
Z_{\tau^{l_{i+1}-1}}-\mathsf{E}_{\mathcal{F}_{\tau^{l_{i+1}-1}}}\left[
Y_{\tau^{l_{i+1}-1}+1}^{\star}\right]  +\mathcal{S}_{\tau^{l_{i+1}-1}%
}-\mathcal{S}_{i+1}\right)  \geq0.
\]
One thus has by construction%

\begin{multline*}
Z_{\tau^{l_{i}-1}}-\mathsf{E}_{\mathcal{F}_{\tau^{l_{i}-1}}}\left[
Y_{\tau^{l_{i}-1}+1}^{\star}\right]  +\mathcal{S}_{\tau^{l_{i}-1}}%
-\mathcal{S}_{i}\\
=Z_{\tau^{l_{i}-1}}-\mathsf{E}_{\mathcal{F}_{\tau^{l_{i}-1}}}\left[
Y_{\tau^{l_{i}-1}+1}^{\star}\right]  +\mathcal{S}_{\tau^{l_{i}-1}}%
-\mathcal{S}_{i+1}+\zeta_{i+1}^{-}.
\end{multline*}
It then follows similarly by taking $\mathcal{F}_{i}$-conditional expectations
that%
\begin{multline*}
1_{\left\{  \tau^{l_{i}-1}<i<\tau^{l_{i}}\right\}  }\left(  Z_{\tau^{l_{i}-1}%
}-\mathsf{E}_{\mathcal{F}_{\tau^{l_{i}-1}}}\left[  Y_{\tau^{l_{i}-1}+1}%
^{\star}\right]  +\mathcal{S}_{\tau^{l_{i}-1}}-\mathcal{S}_{i}\right)
=1_{\left\{  \tau^{l_{i}-1}<i<\tau^{l_{i}}\right\}  }\\
\times\mathsf{E}_{\mathcal{F}_{i}}\left[  \left(  Z_{\tau^{l_{i+1}-1}%
}-\mathsf{E}_{\mathcal{F}_{\tau^{l_{i+1}-1}}}\left[  Y_{\tau^{l_{i+1}-1}%
+1}^{\star}\right]  +\mathcal{S}_{\tau^{l_{i+1}-1}}-\mathcal{S}_{i+1}\right)
1_{\left\{  \tau^{l_{i+1}-1}<i+1\leq\tau^{l_{i+1}}\right\}  }\right]  \geq0
\end{multline*}
by the induction hypothesis (note again that $l_{i+1}=l_{i}$). Thus,
(\ref{c2}) holds for $\tau^{l_{i}-1}<i\leq\tau^{l_{i}}$ and so (\ref{c2}) is
proved. We thus conclude that $\mathcal{S}$ is a martingale that satisfies
(\ref{c1}) and (\ref{c2}). The theorem is proved.

\subsection{Proof of Corollary~\ref{alms}}

Suppose that $M=M^{\star}-\mathcal{S}\in\mathcal{M}^{\circ\circ}$ for some
martingale $\mathcal{S}$ represented by (\ref{ds}). Since $M\in\mathcal{M}%
^{\circ\circ}\subset$ $\mathcal{M}^{\circ},$ Theorem~\ref{cor:main} implies
(via Corollary~\ref{eqco}) that the $\mathcal{\zeta}_{i+1}$ satisfy (\ref{pp})
for $i=\tau^{l_{i}}$. Further, for any $0\leq i\leq J$ one has%
\begin{align*}
Y_{i}^{\star}  &  =\max_{i\leq r\leq J}\left(  Z_{r}-M_{r}+M_{i}\right)
=\max_{i\leq r\leq J}\left(  Z_{r}-M_{r}^{\star}+M_{i}^{\star}+\mathcal{S}%
_{r}-\mathcal{S}_{i}\right) \\
&  \leq Z_{\tau_{i}^{\star}}-M_{\tau_{i}^{\star}}^{\star}+M_{i}^{\star
}+\mathcal{S}_{\tau_{i}^{\star}}-\mathcal{S}_{i}=Y_{i}^{\star}+\mathcal{S}%
_{\tau_{i}^{\star}}-\mathcal{S}_{i}%
\end{align*}
since $M^{\star}\in\mathcal{M}^{\circ\circ}.$ So%
\[
\mathcal{S}_{\tau_{i}^{\star}}-\mathcal{S}_{i}\geq0\text{ \ \ while
\ }\mathsf{E}_{\mathcal{F}_i}\left[  \mathcal{S}_{\tau_{i}^{\star}}%
-\mathcal{S}_{i}\right]  =0,
\]
by Doob's sampling theorem. Hence, by the sandwich property, $\mathcal{S}%
_{\tau_{i}^{\star}}-\mathcal{S}_{i}=0$ for all $0\leq i\leq J.$ This implies
for any $i$ with $\tau^{l-1}<i<\tau^{l}$ that
\[
\mathcal{\zeta}_{i+1}=\mathcal{S}_{i+1}-\mathcal{S}_{i}=\mathcal{S}%
_{\tau_{i+1}^{\star}}-\mathcal{S}_{\tau_{i}^{\star}}=0
\]
due to $\tau_{i}^{\star}=\tau_{i+1}^{\star}=\tau^{l}.$

Conversely, if the $\zeta_{i+1}$ satisfy (\ref{pp}) for $i=\tau^{l_{i}}$ and
further $\zeta_{i+1}=0$ for any $i$ with $\tau^{l_{i-1}}<i<\tau^{l_{i}}%
=\tau_{i}^{\star},$ they also trivially satisfy (\ref{imm}) and (\ref{imp}),
and so one has $M\in\mathcal{M}^{\circ}$ by Theorem~\ref{cor:main} (via
Corollary~\ref{eqco}). Furthermore it follows that $\mathcal{S}_{\tau
_{i}^{\star}}=\mathcal{S}_{i}$ for any $i$ with $\tau^{l_{i-1}}<i<\tau^{l_{i}%
}=\tau_{i}^{\star},$ so by Proposition$~$\ref{prop:propM}
\begin{align*}
\max_{i\leq r\leq J}\left(  Z_{r}-M_{r}\right)   &  =Z_{\tau_{i}^{\star}%
}-M_{\tau_{i}^{\star}}=Z_{\tau_{i}^{\star}}-M_{\tau_{i}^{\star}}^{\star
}+\mathcal{S}_{\tau_{i}^{\star}}\\
&  =Y_{i}^{\star}-M_{i}^{\star}+\mathcal{S}_{\tau_{i}^{\star}}=Y_{i}^{\star
}-M_{i}+\mathcal{S}_{\tau_{i}^{\star}}-\mathcal{S}_{i}\\
&  =Y_{i}^{\star}-M_{i}.
\end{align*}
Hence, $M\in\mathcal{M}^{\circ\circ,i}$ and so $M\in\mathcal{M}^{\circ\circ}$
since $i$ was arbitrary.

\subsection{Proof of Theorem~\ref{i0}}

(i): Due to Proposition$~$\ref{prop:propM}, $M\in\mathcal{M}^{\circ,0}$ if and
only if
\[
0=\max_{0\leq r\leq J}\left(  Z_{r}-M_{r}-Z_{\tau^{\star}}+M_{\tau^{\star}%
}\right)
\]
with $\tau^{\star}:=\tau_{0}^{\star},$ which is equivalent with%
\begin{align}
\max_{0\leq r<\tau^{\star}}\left(  Z_{r}-M_{r}-Z_{\tau^{\star}}+M_{\tau
^{\star}}\right)   &  \leq0\text{ \ \ and}\label{cc1}\\
\max_{\tau^{\star}<r\leq J}\left(  Z_{r}-M_{r}-Z_{\tau^{\star}}+M_{\tau
^{\star}}\right)   &  \leq0. \label{cc2}%
\end{align}
Since $\tau^{\star}=\tau_{r}^{\star}$ for $0\leq r<\tau^{\star},$ (\ref{cc1})
reads
\begin{multline}
\max_{0\leq r<\tau^{\star}}\left(  Z_{r}-M_{r}^{\star}-Z_{\tau_{r}^{\star}%
}+M_{\tau_{r}^{\star}}^{\ast}-\mathcal{S}_{\tau_{r}^{\star}}+\mathcal{S}%
_{r}\right)  =\max_{0\leq r<\tau^{\star}}\left(  Z_{r}-Y_{r}^{\ast
}-\mathcal{S}_{\tau_{r}^{\star}}+\mathcal{S}_{r}\right) \label{ccc1}\\
=\max_{0\leq r<\tau^{\star}}\left(  Z_{r}-Y_{r}^{\ast}-\mathcal{S}%
_{\tau^{\star}}+\mathcal{S}_{r}\right)  \leq0
\end{multline}
which in turn is equivalent with (\ref{dd1}). Indeed, suppose that
(\ref{ccc1}) holds. Then (\ref{dd1}) clearly holds for $j=$ $\tau^{\star}.$
Now assume that (\ref{dd1}) holds for $0<j\leq\tau^{\star}.$ Then, by backward
induction,%
\[
\max_{0\leq r<j-1}\left(  Z_{r}-Y_{r}^{\ast}-\mathcal{S}_{j-1}+\mathcal{S}%
_{r}\right)  =\max_{0\leq r<j-1}\left(  Z_{r}-Y_{r}^{\ast}-\mathcal{S}%
_{j}+\mathcal{S}_{r}\right)  +\zeta_{j}\leq\zeta_{j}%
\]
By next taking $\mathcal{F}_{j-1}$-conditional expectations we get (\ref{dd1})
for $j-1.$ For the converse, just take $j=$ $\tau^{\star}$ in (\ref{dd1}). We
next consider (\ref{cc2}), which may be written as%
\[
\max_{\tau^{\star}<r\leq J}\left(  Z_{r}-M_{r}^{\ast}+M_{\tau^{\star}}^{\ast
}-Z_{\tau^{\star}}-\mathcal{S}_{\tau^{\star}}+\mathcal{S}_{r}\right)  \leq0
\]
Using the Doob decomposition of the Snell envelope (\ref{Doob0}),
$A_{\tau^{\star}}^{\ast}=0,$ and that $Y_{\tau^{\star}}^{\star}=Z_{\tau
^{\star}},$ this is equivalent with (\ref{dd2}).

(ii): Suppose that $M\in\mathcal{M}^{\circ\circ,0}.$ One has that $M=M^{\star
}-\mathcal{S}\in\mathcal{M}^{\circ\circ,0},$ if and only if%
\[
0=\max_{0\leq r\leq J}\left(  Z_{r}-M_{r}-Y_{0}^{\ast}\right)  =\max_{0\leq
r\leq J}\left(  Z_{r}-M_{r}^{\ast}+\mathcal{S}_{r}-Y_{0}^{\ast}\right)  .
\]
Since $Z_{\tau^{\star}}-M_{\tau^{\star}}^{\ast}=Y_{0}^{\ast}$ a.s., this
implies $\mathcal{S}_{\tau^{\star}}\leq0$ a.s., and so by $\mathsf{E}%
_{\mathcal{F}_{0}}\left[  \mathcal{S}_{\tau^{\star}}\right]  =0,$ that
$\mathcal{S}_{\tau^{\star}}=0$ by the sandwich property. Now note that
$\widetilde{\mathcal{S}}_{j}=\mathcal{S}_{j\wedge\tau^{\star}},$ $j=0,...,J,$
is also a martingale with $\widetilde{\mathcal{S}}_{J}=0$ a.s. Let us write
(assuming that $J\geq1$)
\[
0=\widetilde{\mathcal{S}}_{J}=\sum_{j=1}^{J}\widetilde{\mathcal{S}}%
_{j}-\widetilde{\mathcal{S}}_{j-1}=\widetilde{\mathcal{S}}_{J}-\widetilde
{\mathcal{S}}_{J-1}+\sum_{j=1}^{J-1}\widetilde{\mathcal{S}}_{j}-\widetilde
{\mathcal{S}}_{j-1}.
\]
That is, $\widetilde{\mathcal{S}}_{J}-\widetilde{\mathcal{S}}_{J-1}$ is
$\mathcal{F}_{J-1}$-measurable with $\mathsf{E}_{\mathcal{F}_{J-1}}\left[
\widetilde{\mathcal{S}}_{J}-\widetilde{\mathcal{S}}_{J-1}\right]  =0,$ so
$\widetilde{\mathcal{S}}_{J}-\widetilde{\mathcal{S}}_{J-1}=0$ and thus
$\widetilde{\mathcal{S}}_{J-1}=0$ a.s. By proceeding backwards in the same way
we see that $\widetilde{\mathcal{S}}_{j}-\widetilde{\mathcal{S}}_{j-1}=0$ for
all $1\leq j\leq J,$ which implies%
\[
\widetilde{\mathcal{S}}_{j}-\widetilde{\mathcal{S}}_{j-1}=\sum_{r=1}%
^{j\wedge\tau^{\star}}\zeta_{r}-\sum_{r=1}^{(j-1)\wedge\tau^{\star}}\zeta
_{r}=1_{\left\{  \tau^{\star}\geq j\right\}  }\zeta_{j}=0,
\]
\ \ whence $\mathcal{S}_{j}=0$ for $0\leq j\leq\tau^{\star},$ i.e.
(\ref{df1}). Since $\mathcal{M}^{\circ\circ,0}\subset\mathcal{M}^{\circ,0}$
(\ref{df2}) follows from (\ref{dd2}) with $\mathcal{S}_{\tau^{\star}}=0.$
Conversely, if (\ref{df1}) and (\ref{df2}) hold, then%
\begin{align*}
\max_{0\leq r\leq J}\left(  Z_{r}-M_{r}^{\ast}+\mathcal{S}_{r}-Y_{0}^{\ast
}\right)   &  =\max_{0\leq r\leq\tau^{\star}}\left(  Z_{r}-M_{r}^{\ast}%
-Y_{0}^{\ast}\right)  \vee\max_{\tau^{\star}<r\leq J}\left(  Z_{r}-M_{r}%
^{\ast}+\mathcal{S}_{r}-Y_{0}^{\ast}\right) \\
&  =0\vee\max_{\tau^{\star}<r\leq J}\left(  Z_{r}-M_{r}^{\ast}+\mathcal{S}%
_{r}-Y_{0}^{\ast}\right)
\end{align*}
and due to (\ref{df2}), for each $\tau^{\star}<r\leq J$%
\[
Z_{r}-M_{r}^{\ast}+\mathcal{S}_{r}-Y_{0}^{\ast}\leq Y_{r}^{\star}-M_{r}^{\ast
}+A_{r}^{\ast}-Y_{0}^{\ast}=0
\]
by (\ref{Doob0}). That is $\max_{0\leq r\leq J}(Z_{r}-M_{r})=Y_{0}^{\ast}$ and
so $M\in\mathcal{M}^{\circ\circ,0}.$

\subsection{Proof of Proposition~\ref{ASP}}

It holds that%
\begin{align*}
\widetilde{\mathsf{E}}\left[  \max_{0\leq j\leq J}\left(  Z_{j}-\widetilde
{M}_{j}\right)  \right]   &  =\widetilde{\mathsf{E}}\widetilde{\mathsf{E}%
}_{\mathcal{F}}\left[  \max_{0\leq j\leq J}\left(  Z_{j}-M_{j}^{\star
}+\mathcal{S}_{j}+\eta_{j}\right)  \right] \\
&  \geq\widetilde{\mathsf{E}}\left[  \max_{0\leq j\leq J}\left(  Z_{j}%
-M_{j}^{\star}+\mathcal{S}_{j}+\widetilde{\mathsf{E}}_{\mathcal{F}}\left[
\eta_{j}\right]  \right)  \right] \\
&  =\mathsf{E}\left[  \max_{0\leq j\leq J}\left(  Z_{j}-M_{j}^{\star
}+\mathcal{S}_{j}\right)  \right]  \geq Y_{0}^{\star},
\end{align*}
by duality, hence (\ref{uppb}). Further, if $\mathcal{S}=0$ and (\ref{asl})
applies, we may write%
\begin{align}
Z_{j}-\widetilde{M}_{j}  &  =Z_{j}-M_{j}^{\star}+\eta_{j}\nonumber\\
&  =Z_{j}-\left(  Y_{j}^{\star}+A_{j}^{\star}-Y_{0}^{\star}\right)  +\eta
_{j}\nonumber\\
&  =Y_{0}^{\star}+Z_{j}-Y_{j}^{\star}-A_{j}^{\star}+\eta_{j}\leq Y_{0}^{\star}
\label{opa}%
\end{align}
Then (\ref{as}) follows by (\ref{uppb}) and the sandwich property.

As for the last statement: If $\tau^{\star}=0$ one has $Z_{0}=Y_{0}^{\star}$
and $A_{0}^{\star}=0$ by definition, hence in (\ref{asl}) $\eta^{0}\leq0$
a.s., which implies $\eta_{0}=0.$ If $\tau^{\star}>0$ one has $Z_{\tau^{\star
}}=Y_{\tau^{\star}}^{\star}$ and $A_{j}^{\star}-A_{j-1}^{\star}=Y_{j-1}%
^{\star}-\mathsf{E}_{\mathcal{F}_{j-1}}\left[  Y_{j}^{\star}\right]  =0$ for
$j=1,...,\tau^{\star},$ hence $A_{\tau^{\star}}^{\star}=0$ and so $\eta
_{\tau^{\star}}\leq0$ due to (\ref{asl}), implying $\eta_{\tau^{\star}}=0.$ If
$\tau^{\star}$ is strictly optimal, that is $A_{\tau^{\star}+1}^{\star
}=A_{\tau^{\star}+1}^{\star}-A_{\tau^{\star}}^{\star}=Y_{\tau^{\star}}^{\star
}-\mathsf{E}_{\mathcal{F}_{\tau^{\star}}}\left[  Y_{\tau^{\star}+1}^{\star
}\right]  >0,$ one has%
\[
Y_{j}^{\star}-Z_{j}+A_{j}^{\star}>0\text{ \ \ for all }j\neq\tau^{\star}%
\]
since always $Y_{j}^{\star}\geq Z_{j}$ and $A_{j}^{\star}\geq0,$ $Y_{j}%
^{\star}>Z_{j}$\ for $0\leq j<$ $\tau^{\star},$ and $A_{j}^{\star}\geq
A_{\tau^{\star}+1}^{\star}$ for $j>$ $\tau^{\star}$ (remember that $A$ is nondecreasing).

\subsection{Proof of Theorem~\ref{opran}}

Let $M$ $=$ $M^{\star}-\mathcal{S}$ $\in$ $\mathcal{M}^{\circ,0},$ let $\left(
\eta_{j}\right)  $ be as stated, and let us 
assume that
\begin{equation}
\widetilde{\mathsf{E}}\Bigl[\max_{0\leq j\leq J}(Z_{j}-\widetilde{M}%
_{j})\Bigr]=Y_{0}^{\star}. \label{asw1}%
\end{equation}
We then have to show that $M=M^{\star}.$
By
using (\ref{Doob0}) we may write%
\begin{align}
\max_{0\leq j\leq J}\left(  Z_{j}-\widetilde{M}_{j}\right)   &  =\max_{0\leq
j\leq J}\left(  Z_{j}-M_{j}^{\star}+\mathcal{S}_{j}+\eta_{j}\right)
\nonumber\\
&  =Y_{0}^{\star}+\max_{0\leq j\leq J}\left(  \mathcal{S}_{j}+\eta_{j}%
+Z_{j}-Y_{j}^{\star}-A_{j}^{\star}\right)  .\nonumber
\end{align}
By (\ref{asw1}) we must have%
\begin{equation}
\widetilde{\mathsf{E}}\left[  \max_{0\leq j\leq J}\left(  \mathcal{S}_{j}%
+\eta_{j}+Z_{j}-Y_{j}^{\star}-A_{j}^{\star}\right)  \right]  =0. \label{eq}%
\end{equation}
We observe that%
\[
\max_{0\leq j\leq J}\left(  \mathcal{S}_{j}+\eta_{j}+Z_{j}-Y_{j}^{\star}%
-A_{j}^{\star}\right)  \geq\mathcal{S}_{\tau^{\star}}+\eta_{\tau^{\star}%
}+Z_{\tau^{\star}}-Y_{\tau^{\star}}^{\star}-A_{\tau^{\star}}^{\star
}=\mathcal{S}_{\tau^{\star}},
\]
using $\eta_{\tau^{\star}}=0$ due to Proposition~\ref{ASP}. By Doob's sampling
theorem, $\widetilde{\mathsf{E}}\left[  \mathcal{S}_{\tau^{\star}}\right]  =0$
and so (\ref{eq}) implies by the sandwich property,%
\begin{gather}
\max_{0\leq j\leq J}\left(  \mathcal{S}_{j}-\mathcal{S}_{\tau^{\star}}%
+\eta_{j}+Z_{j}-Y_{j}^{\star}-A_{j}^{\star}\right)  =0,\text{ \ \ a.s.,
whence}\nonumber\\
\eta_{j}\leq\mathcal{S}_{\tau^{\star}}-\mathcal{S}_{j}+Y_{j}^{\star}%
-Z_{j}+A_{j}^{\star}\text{ \ \ a.s. for all }0\leq j\leq J. \label{tu1}%
\end{gather}
Let us fix some $0\leq j\leq J$ and assume that $\mathsf{P}(0\leq
j<\tau^{\star})>0.$ Due to (\ref{etu}) we thus have that,
\begin{equation}
\xi_{j}1_{\{0\leq j<\tau^{\star}\}}\leq\left(  1+\frac{\mathcal{S}%
_{\tau^{\star}}-\mathcal{S}_{j}}{Y_{j}^{\star}-Z_{j}+A_{j}^{\star}}\right)
1_{\{0\leq j<\tau^{\star}\}}\text{ \ \ almost surely.} \label{xias}%
\end{equation}
(note that $A_{j}^{\star}\geq0$ and $Y_{j}^{\star}>Z_{j}$ for $0\leq
j<\tau^{\star}$). Since $M\in\mathcal{M}^{\circ,0},$ $0\leq j<\tau^{\star}$
implies by (\ref{dd1}) $\mathcal{S}_{\tau^{\star}}-\mathcal{S}_{j}\geq$
$Z_{j}-Y_{j}^{\star}.$ Now assume that for some $\epsilon>0$ but small enough,
the set
\[
\mathcal{C}_{j}^{\epsilon}:=\left\{  0\leq j<\tau^{\star}\right\}
\cap\left\{  0>-\epsilon(Y_{j}^{\star}-Z_{j}+A_{j}^{\star})>\mathcal{S}%
_{\tau^{\star}}-\mathcal{S}_{j}\geq Z_{j}-Y_{j}^{\star}\right\}
\]
has positive probability. Since on $\mathcal{C}_{j}^{\epsilon}$ one has
\[
1-\frac{Y_{j}^{\star}-Z_{j}}{Y_{j}^{\star}-Z_{j}+A_{j}^{\star}}\leq
1+\frac{\mathcal{S}_{\tau^{\star}}-\mathcal{S}_{j}}{Y_{j}^{\star}-Z_{j}%
+A_{j}^{\star}}<1-\epsilon
\]
we then obtain a contradiction with (\ref{xias}), because $\widetilde
{\mathsf{P}}(\xi_{j}>1-\epsilon)>0.$ Thus for any $\epsilon>0,$ we must have
that $\mathsf{P}(\mathcal{C}_{j}^{\epsilon})=0.$ This in turn implies that%
\[
1_{\{0\leq j<\tau^{\star}\}}\left(  \mathcal{S}_{\tau^{\star}}-\mathcal{S}%
_{j}\right)  \geq0\text{ \ \ a.s.}%
\]
However, the $\mathcal{F}_{j}$-conditional expectation of the left-hand-side
is zero (Doob's sampling theorem). Hence,%
\[
1_{\{0\leq j<\tau^{\star}\}}\mathcal{S}_{\tau^{\star}}=1_{\{0\leq
j<\tau^{\star}\}}\mathcal{S}_{j}\text{ \ \ a.s.}%
\]
by the sandwich property. Since $j$ was arbitrary, this obviously implies
that
\begin{equation}
\mathcal{S}_{j}=0\text{ \ \ for \ \ }\ 0\leq j\leq\tau^{\star}. \label{pa1}%
\end{equation}

Let us next assume that for some $0\leq j\leq J,$ $\mathsf{P}(\tau^{\star
}<j\leq J)>0.$ We then have due to (\ref{etu}) and (\ref{tu1}),%
\begin{equation}
\xi_{j}1_{\{\tau^{\star}<j\leq J\}}\leq\left(  1+\frac{\mathcal{S}%
_{\tau^{\star}}-\mathcal{S}_{j}}{Y_{j}^{\star}-Z_{j}+A_{j}^{\star}}\right)
1_{\{\tau^{\star}<j\leq J\}}\text{ \ \ almost surely.} \label{xias1}%
\end{equation}
For $\tau^{\star}<j\leq J,$ (\ref{dd2}) implies that $\mathcal{S}_{\tau
^{\star}}-\mathcal{S}_{j}\geq$ $Z_{j}-Y_{j}^{\star}-A_{j}^{\star},$ where it
is noted that $Z_{j}-Y_{j}^{\star}-A_{j}^{\star}<0$ due to $Z_{j}$ $\leq$
$Y_{j}$ and $A_{\tau^{\star}+1}^{\star}$ $>$ $0.$ Similarly, we next assume
that for some $\epsilon>0$ the set%
\[
\mathcal{D}_{j}^{\epsilon}:=\left\{  \tau^{\star}<j\leq J\right\}
\cap\left\{  0>-\epsilon(Y_{j}^{\star}-Z_{j}+A_{j}^{\star})>\mathcal{S}%
_{\tau^{\star}}-\mathcal{S}_{j}\geq Z_{j}-Y_{j}^{\star}-A_{j}^{\star}\right\}
\]
has positive probability. Then on $\mathcal{D}_{j}^{\epsilon}$ on has%
\[
0\leq1+\frac{\mathcal{S}_{\tau^{\star}}-\mathcal{S}_{j}}{Y_{j}^{\star}%
-Z_{j}+A_{j}^{\star}}<1-\epsilon,
\]
which gives a contradiction with (\ref{xias1}) however because $\widetilde
{\mathsf{P}}(\xi_{j}>1-\epsilon)>0.$ We so conclude that%
\[
1_{\{\tau^{\star}<j\leq J\}}\left(  \mathcal{S}_{\tau^{\star}}-\mathcal{S}%
_{j}\right)  \geq0\text{ \ \ a.s.}%
\]
and by taking the $\mathcal{F}_{j}$-conditional expectation again, that
$\mathcal{S}_{\tau^{\star}}=\mathcal{S}_{j}$ for $\tau^{\star}\leq j\leq J.$
We had already (\ref{pa1}), and therefore we finally conclude that
$\mathcal{S}=0,$ hence $M=M^{\star}.$

\subsection{Proof of Corollary~\ref{pseudovar}}

If $M=M^{\star}$ one has $\mathrm{Var}\left(  \max_{0\leq j\leq J}%
(Z_{j}-\widetilde{M}_{j})\right)  =0$ due to Proposition~\ref{ASP}. Let us now
take $M\in\mathcal{M}^{\circ,0}$ with $M\neq M^{\star}$ and assume that
$\mathrm{Var}\left(  \max_{0\leq j\leq J}(Z_{j}-\widetilde{M}_{j})\right)
=0.$ From here we will derive a contradiction. As in the proof of
Theorem~\ref{opran} we write%
\begin{align}
\max_{0\leq j\leq J}(Z_{j}-\widetilde{M}_{j})  &  =Y_{0}^{\star}+\max_{0\leq
j\leq J}(\mathcal{S}_{j}+\eta_{j}+Z_{j}-Y_{j}^{\star}-A_{j}^{\star}),\text{
\ \ whence}\nonumber\\
\mathrm{Var}\left(  \max_{0\leq j\leq J}(Z_{j}-\widetilde{M}_{j})\right)   &
=\mathrm{Var}\left(  \max_{0\leq j\leq J}(\mathcal{S}_{j}+\eta_{j}+Z_{j}%
-Y_{j}^{\star}-A_{j}^{\star})\right)  =0. \label{va}%
\end{align}
Now, $M\neq M^{\star}$ implies by Theorem~\ref{opran} that%
\begin{equation}
\widetilde{\mathsf{E}}\left[  \max_{0\leq j\leq J}(\mathcal{S}_{j}+\eta
_{j}+Z_{j}-Y_{j}^{\star}-A_{j}^{\star})\right]  >0. \label{ea}%
\end{equation}
That is, due to (\ref{va}) and (\ref{ea}), there exists a constant $c>0$ such
that%
\[
\max_{0\leq j\leq J}(\mathcal{S}_{j}+\eta_{j}+Z_{j}-Y_{j}^{\star}-A_{j}%
^{\star})=c>0.
\]
Using (\ref{etu}) and the fact that always $Y_{j}^{\star}-Z_{j}+A_{j}^{\star
}\geq0$ and $\xi_{j}\leq1,$ this implies%
\begin{equation}
0<c=\max_{0\leq j\leq J}(\mathcal{S}_{j}+\left(  \xi_{j}-1\right)
(Y_{j}^{\star}-Z_{j}+A_{j}^{\star}))\leq\max_{0\leq j\leq J}(\mathcal{S}_{j}).
\label{ceq}%
\end{equation}
Consider the stopping time $\sigma:=\inf\{j\geq0:\mathcal{S}_{j}\geq c\}.$
Then, using $\mathcal{S}_{0}=0$ and (\ref{ceq}), we must have that
$0<\sigma\leq J$ almost surely. Since $\mathcal{S}$ is a martingale, Doob's
sampling theorem then implies $0=\mathcal{S}_{0}=\mathsf{E}\left[
\mathcal{S}_{\sigma}\right]  \geq c,$ hence a contradiction. That is, the
assumption $\mathrm{Var}\left(  \max_{0\leq j\leq J}(Z_{j}-\widetilde{M}%
_{j})\right)  =0$ was false.

\section{Numerical examples}

\label{SecNum}

\subsection{Simple stylized numerical example}

\label{styex}

We first reconsider the stylized test example due to \cite[Section
8]{J_SchZhaHua}, also considered in \cite{J_BelHilSch}, where $J=2$, $Z_{0}%
=0$, $Z_{2}=1$, and $Z_{1}=\mathcal{U}$ is a random variable which uniformly
distributed on the interval $[0,2]$. The optimal stopping time $\tau^{\ast}$
is thus given by
\[
\tau^{\ast}=\left\{
\begin{array}
[c]{rcl}%
1, & \quad & \mathcal{U}\geq1,\\
2, & \quad & \mathcal{U}<1.
\end{array}
\right.
\]
and the optimal value is $Y_{0}^{\star}=\mathsf{E}\max(\mathcal{U},1)=5/4$.
Furthermore, it is easy to see that the Doob martingale is given by
\[
M_{0}^{\star}=0,\quad M_{1}^{\star}=M_{2}^{\star}=\max\{\mathcal{U}%
,1\}-\frac{5}{4}.
\]
As an illustration of the theory developed in Sections~\ref{SecCh}%
-\ref{SecRan}, let us consider the linear span $M\left(  \alpha\right)
=\alpha M^{\star}$ as a pool of candidate martingales and randomize it
according to (\ref{etu}). We thus consider the objective function
\begin{equation}
\mathcal{O}_{\theta}(\alpha):=\widetilde{\mathsf{E}}\Bigl[\max_{0\leq j\leq
2}\left(  Z_{j}-\alpha M_{j}^{\star}+\theta\xi_{j}\left(  Y_{j}^{\star}%
-Z_{j}+A_{j}^{\star}\right)  \right)  \Bigr], \label{expob}%
\end{equation}
for some fixed $\theta\geq0,$ where $(\xi_{j})$ are i.i.d. random variables
with uniform distribution on $[-1,1].$ Note that for this example
$Y_{1}^{\star}=\max(\mathcal{U},1),$ $Y_{2}^{\star}=1,$ and $A_{0}^{\star
}=A_{1}^{\star}=0,$ $A_{2}^{\star}=\max\{\mathcal{U},1\}-1,$ is the
non-decreasing predictable process from the Doob decomposition. Moreover, it
is possible to compute (\ref{expob}) in closed form (though we omit detailed
expressions which can be conveniently obtained by Mathematica for instance).
In Figure~\ref{thet0} (left panel) we have plotted (\ref{expob}) for
$\theta=0$ and $\theta=1,$ together with the objective function%
\[
\overline{\mathcal{O}}_{1}(\alpha):=\widetilde{\mathsf{E}}\Bigl[\max_{0\leq
j\leq2}\left(  Z_{j}-\alpha M_{j}^{\star}+\xi_{j}\right)  \Bigr],
\]
due to a \textquotedblleft naive\textquotedblright\ randomization, not based
on knowledge of the factor $Y_{j}^{\star}-Z_{j}+A_{j}^{\star}.$ Also, in
Figure~\ref{thet0} (right panel), the relative standard deviations
$\sqrt{\mathrm{Var}(\cdot)}/Y_{0}^{\star}$ of the corresponding random
variables%
\begin{align*}
\mathcal{Z}_{\theta}(\alpha)  &  :=\max_{0\leq j\leq2}\left(  Z_{j}-\alpha
M_{j}^{\star}+\theta\xi_{j}\left(  Y_{j}^{\star}-Z_{j}+A_{j}^{\star}\right)
\right)  ,\text{ \ \ }\theta=0,1,\text{ \ and}\\
\overline{\mathcal{Z}}_{1}(\alpha)  &  :=\max_{0\leq j\leq2}\left(
Z_{j}-\alpha M_{j}^{\star}+\xi_{j}\right)
\end{align*}
are depicted as a function of $\alpha.$

\begin{center}
\begin{figure}
\includegraphics[width= 6cm]{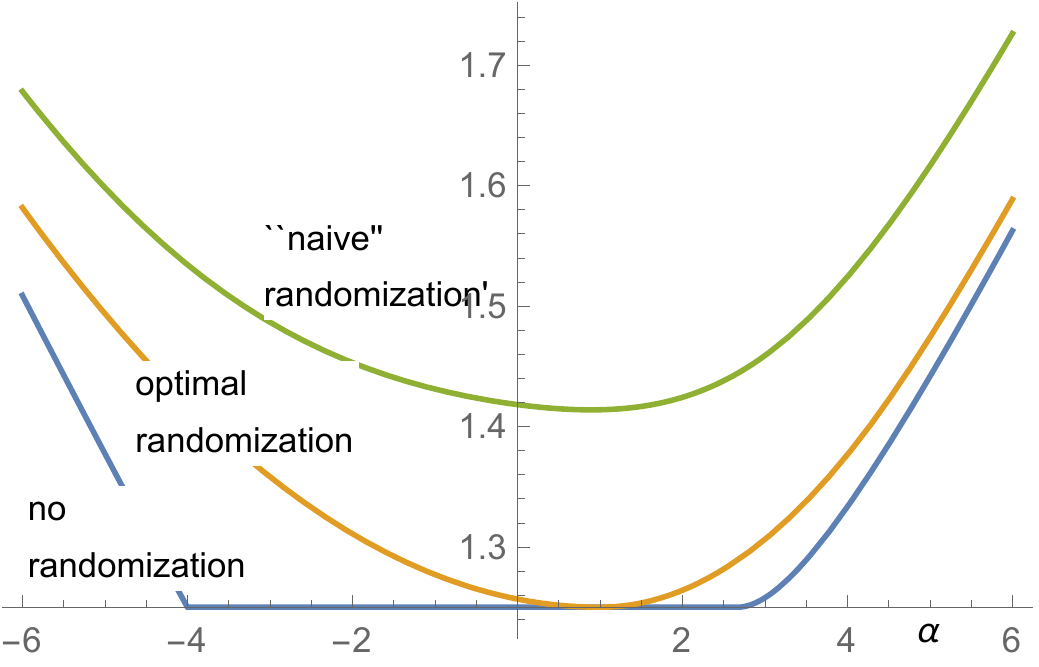}\quad
\includegraphics[width= 6cm]{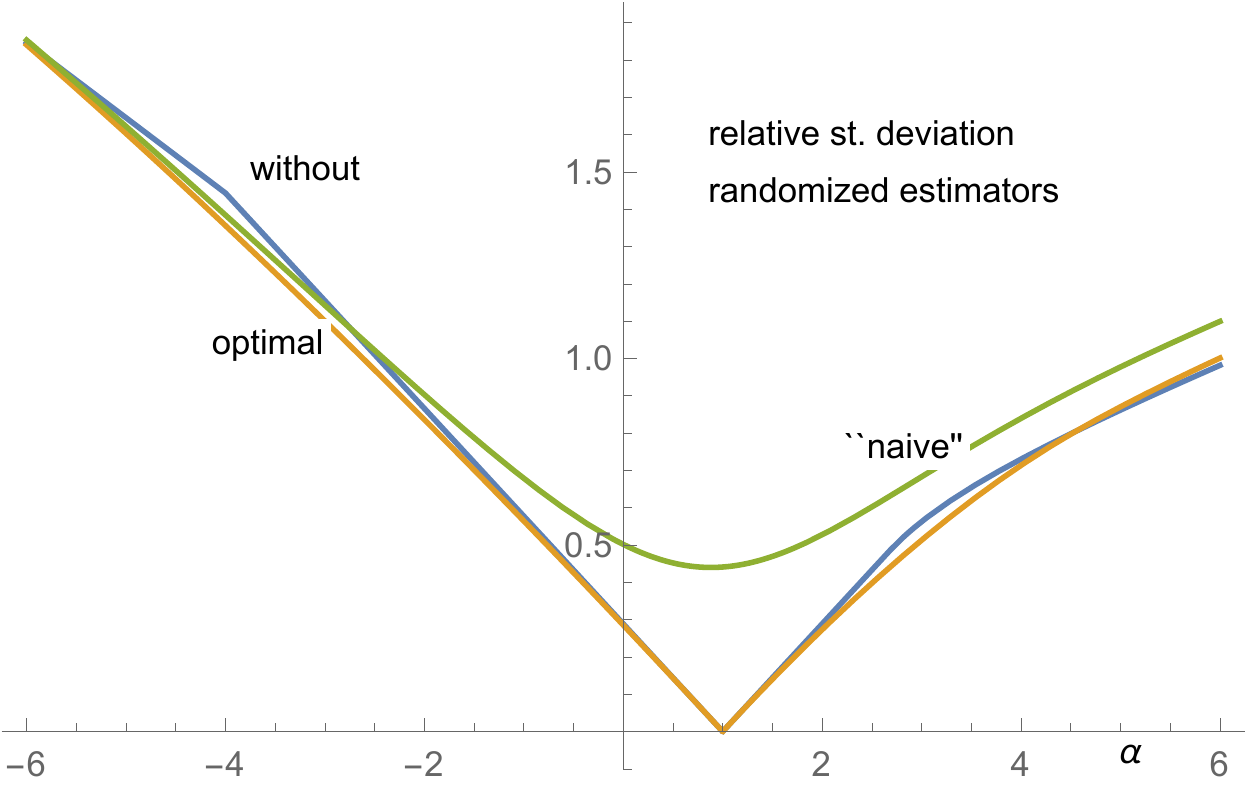}\caption{Left panel:
objective functions $\mathcal{O}_{0}(\alpha)$ (no randomization),
$\mathcal{O}_{1}(\alpha)$ (optimal randomization), and $\overline{\mathcal{O}%
}_{1}$ (``naive'' randomization); right panel: relative deviations of
$\mathcal{Z}_{0}(\alpha)$ (without randomization), $\mathcal{Z}_{1}(\alpha)$
(optimal randomization), $\overline{\mathcal{Z}}_{1}(\alpha)$ (``naive''
randomization)}%
\label{thet0}%
\end{figure}
\end{center}
From \cite[Section 8]{J_SchZhaHua} we know that, and from the plot of
$\mathcal{O}_{0}(\alpha)$ in Figure~\ref{thet0} (left panel) we see that,
$M(\alpha)\in\mathcal{M}_{0}^{\circ}$ for $\alpha\in\lbrack-4,8/3]$. On the
other hand, the right panel plot shows that $\mathrm{Var}(\mathcal{Z}%
_{0}(\alpha))$ may be relatively large for $\alpha\neq1,$ and that the Doob
martingale (i.e. $\alpha=1$) is the only surely optimal one in our parametric
family. Moreover, the objective function due to the optimal randomization
attains its unique minimum at the Doob martingale, i.e. for $\alpha=1.$
Further, the variance of the corresponding optimally randomized estimator
attains its unique minimum zero also at $\alpha=1.$ Let us note that these
observations are anticipated by Theorem~\ref{opran} and
Corollary~\ref{pseudovar}. The catch is that for each $\alpha\neq1$ the
randomized $M(\alpha)$ fails to be optimal in the sense of (\ref{asw}). We
also see that both the optimal and the \textquotedblleft
naive\textquotedblright\ randomization render the minimization problem to be
strictly convex. Moreover, while the minimum due to the \textquotedblleft
naive\textquotedblright\ randomization lays significantly above the true
solution, the argument where the minimum is attained, $\overline{\alpha}$ say,
identifies nonetheless a martingale that virtually coincides with the Doob
optimal one. That is, $\overline{\alpha}\approx1$ and $M(\overline{\alpha})$
is optimal corresponding to variance $\mathrm{Var}(\mathcal{Z}_{0}%
(\overline{\alpha}))\approx0$, which can be seen in the right panel.

\subsection{Bermudan call in a Black-Scholes model}

In order to exhibit the merits of randomization based on the theoretical
results in this paper in a more realistic case, we have constructed an example
that contains all typical features of a real life Bermudan option, but, is
simple enough to be treated numerically in all respects on the other hand.

As in the previous example we take $J=2$, and specify the (discounted)
cash-flows $Z_{j}$ as functions of the (discounted) stock prices $S_{j}$ by%
\begin{equation}
Z_{0}=0,\text{ \ \ }Z_{1}=(S_{1}-\kappa_{1})^{+},\text{ \ \ }Z_{2}%
=(S_{2}-\kappa_{2})^{+} \label{str}%
\end{equation}
For $S$ we take the Black-Scholes model%
\begin{equation}
S_{j}=S_{0}\exp(-\frac{1}{2}\sigma^{2}j+\sigma W_{j}),\text{ \ \ }j=0,1,2,
\label{St}%
\end{equation}
where $W_{1}\sim\mathcal{N}\left(  0,1\right)  $ and $W_{1,2}:=W_{2}-W_{1}%
\sim\mathcal{N}\left(  0,1\right)  ,$ independent of $W_{1}.$ As such we have
a stylized example of a Bermudan call option under a Black-Scholes model with
two (non-trivial) exercise dates if $\kappa_{2}>\kappa_{1}\geq0$. Note that
usually a Bermudan call is considered for a fixed strike and a dividend paying
stock, yielding a non-trivial optimal stopping time. Though increasing strikes
here look somewhat unusual, it is simple for presentation while,
mathematically, the effect is the same as for a dividend paying stock and a
fixed strike. For the continuation function at $j=1$ we thus have%
\begin{align}
C_{1}(W_{1})  &  =\mathsf{E}_{W_{1}}\left[  \left(  S_{0}\exp(-\sigma
^{2}+\sigma W_{2})-\kappa_{2}\right)  ^{+}\right] \nonumber\\
&  =\int\left(  S_{0}\exp(-\sigma^{2}+\sigma W_{1}+\sigma z)-\kappa
_{2}\right)  ^{+}\phi(z)dz, \label{bs}%
\end{align}
where $\phi(z)=\left(  2\pi\right)  ^{-1/2}\exp(-z^{2}/2)$ is the standard
normal density. While abusing notation a bit we will denote the cash-flows by
$Z_{1}(W_{1})$ and $Z_{2}(W_{2})=Z_{2}(W_{1,}W_{1,2}),$ respectively. For the
(discounted) option value at $j=0$ one thus has%
\begin{align*}
Y_{0}^{\star}  &  =\mathsf{E}\left[  \max\left(  Z_{1}(W_{1}),C_{1}%
(W_{1})\right)  \right] \\
&  =\int\max\left(  \left(  S_{0}\exp(-\frac{1}{2}\sigma^{2}+\sigma
z)-\kappa_{1}\right)  ^{+},C_{1}(z)\right)  \phi(z)dz
\end{align*}
Further we obviously have%
\[
Y_{1}^{\star}(W_{1})=\max\left(  Z_{1}(W_{1}),C_{1}(W_{1})\right)  \text{
\ \ and \ \ }Y_{2}^{\star}(W_{2})=Z_{2}(W_{2})=Z_{2}(W_{1,}W_{1,2}).
\]
The Doob martingale for this example is thus given by%
\[
M_{0}^{\star}=0,\text{ \ \ }M_{1}^{\star}=Y_{1}^{\star}(W_{1})-Y_{0}^{\star
},\text{ \ \ }M_{2}^{\star}-M_{1}^{\star}=Z_{2}(W_{1,}W_{1,2})-C_{1}(W_{1})
\]
and the non-decreasing predictable component $A^{\star}$ is given by%
\[
A_{0}^{\star}=A_{1}^{\star}=0,\text{ \ \ }A_{2}^{\star}=Y_{1}^{\star}%
(W_{1})-C_{1}(W_{1}).
\]
For demonstration purposes we will quasi analytically compute the optimal
randomization coefficient in (\ref{etu}),%
\[
Y^{\star}-Z+A^{\star}=
\begin{cases}
Y_{0}^{\star} & j=0,\\
(C_{1}(W_{1})-Z_{1}(W_{1}))^{+}, & j=1,\\
( Z_{1}(W_{1})-C_{1}(W_{1}))^{+}, & j=2.
\end{cases}
\]
by using a Black(-Scholes) type formula%
\begin{multline*}
C_{1}(W_{1})=S_{0}\exp(-\frac{1}{2}\sigma^{2}+\sigma W_{1})\mathcal{N}\left(
W_{1}+\frac{1}{\sigma}\ln(S_{0}/\kappa_{2})\right)  \\
-\kappa_{2}\mathcal{N}\left(  W_{1}+\frac{1}{\sigma}\ln(S_{0}/\kappa
_{2})-\sigma\right)  ,
\end{multline*}
and a numerical integration for obtaining the target value $Y_{0}^{\star}$.
We now consider two martingale families.
\begin{description}
\item [(M-Sty)] For any
$\boldsymbol{\alpha}=(\alpha_{11},\alpha_{12},\alpha_{21}%
,\alpha_{22})$ we  set%
\begin{align}
M_{1}^{\text{sty}}(\boldsymbol{\alpha},W)  &  :=\alpha_{11}\left(
Y_{1}^{\star}(W_{1})-Y_{0}^{\star}-W_{1}\right)  +\alpha_{12}W_{1}%
\label{Msty}\\
M_{2}^{\text{sty}}(\boldsymbol{\alpha},W)  &  :=M_{1}^{\text{sty}%
}(\boldsymbol{\alpha},W)+\alpha_{21}\left(  Z_{2}(W_{1,}W_{1,2})-C_{1}%
(W_{1})-W_{1,2}\right)  +\alpha_{22}W_{1,2}.\nonumber
\end{align}
Note that $M^{\text{sty}}((1,1,1,1),W)=M^{\star}(W).$

\item [(M-Hermite)] Using that the (probabilistic) Hermite polynomials given by
\[
He_{k}(x)=(-1)^{k}e^{\frac{x^{2}}{2}}\left(  \frac{d}{dx}\right)
^{k}e^{-\frac{x^{2}}{2}},\text{ \ \ }k=0,1,2,...,
\]
are orthogonal with respect to the standard Gaussian density we consider a martingale family
\begin{align}
M_{1}^{\text{H}}\left(  \boldsymbol{\alpha},W\right)   &  =\sum_{k=1}%
^{K}\alpha_{1,k}He_{k}(W_{1})\label{MH}\\
M_{2}^{\text{H}}\left(  \boldsymbol{\alpha},W\right)   &  =M_{1}^{H}\left(
\boldsymbol{\alpha},W\right)  +\sum_{k=0}^{K}\sum_{l=1}^{L}\alpha
_{2,k,l}He_{k}(W_{1})He_{l}(W_{1,2}),\nonumber
\end{align}
with obvious definition of $\boldsymbol{\alpha}\in\mathbb{R}^{K}%
\oplus\,\mathbb{R}^{(K+1)}\times\mathbb{R}^{L}$ (note that $He_{0}\equiv1$).
Since our mere goal is to exhibit the effect of randomization, for the
examples below we restrict ourselves to the choice $K=L=3.$
\end{description}
The parameters in (\ref{str}) and (\ref{St}) are taken to be such that with a
medial probability optimal exercise takes place at $j=1.$ In particular, we
consider two cases specified with parameter sets%
\begin{align*}
\text{(Pa1)}  &  \text{: \ }S_{0}=2,\text{ \ \ }\sigma^{2}=\frac{1}{3},\text{
\ \ }\kappa_{1}=2,\text{ \ \ }\kappa_{2}=3,\text{ \ \ target value }%
Y_{0}^{\star}=0.164402, \\
\text{(Pa2)}  &  \text{: \ }S_{0}=2,\text{ \ \ }\sigma^{2}=\frac{1}{25},\text{
\ \ }\kappa_{1}=2,\text{ \ \ }\kappa_{2}=\frac{5}{2},\text{ \ \ target value
}Y_{0}^{\star}=0.496182,
\end{align*}
respectively. From Figure~\ref{exb} we see that the probability of optimal
exercise at $j=1$ is almost 50\% for (Pa1) and almost 30\% for (Pa2). Let us
visualize on the basis of martingale family (M-Sty) and parameters (Pa1) the
effects of randomization. Consider the objective function%
\begin{equation}
\mathcal{O}_{\theta}(\boldsymbol{\alpha}):=\widetilde{\mathsf{E}}%
\Bigl[\max_{0\leq j\leq2}\left(  Z_{j}-M_{j}^{\text{sty}}(\boldsymbol{\alpha
})+\theta\xi_{j}\left(  Y_{j}^{\star}-Z_{j}+A_{j}^{\star}\right)  \right)
\Bigr]. \label{obex2}%
\end{equation}
where $\theta$ scales the randomization due to i.i.d. random variables
$(\xi_{j}),$ uniformly distributed on $[-1,1]$. I.e., for $\theta=0$ there is
no randomization and $\theta=1$ gives the optimal randomization. Now restrict
(\ref{obex2}) to the sub domain $\boldsymbol{\alpha}=(\alpha_{1},\alpha
_{1},\alpha_{2},\alpha_{2})=:(\alpha_{1},\alpha_{2})$ (while slightly abusing
notation), i.e. $\alpha_{11}=\alpha_{12}=\alpha_{1}$ and $\alpha_{21}%
=\alpha_{22}=\alpha_{2}.$ The function $\mathcal{O}_{0}(\alpha_{1},\alpha
_{2}),$ i.e. (\ref{obex2}) without randomization is visualized in
Figure~\ref{plat}, where expectations are computed quasi-analytically with
Mathematica. From this plot we see that the true value $Y_{0}^{\star
}=0.164402$ is attained on the line $(\alpha_{1},1)$ for various $\alpha_{1}$
(i.e. not only in $(1,1)$). On the other hand, $\mathcal{O}_{1}(\alpha
_{1},\alpha_{2})$ i.e. (\ref{obex2}) with optimal randomization, has a clear
strict global minimum in $(1,1)$, see Figure~\ref{platran}. Let us have a
closer look at the map $\alpha_{1}\rightarrow\mathcal{O}_{\theta}(\alpha
_{1},\alpha_{1},1,1)$ for $\theta=0$ and $\theta=1,$ respectively, and also at
$\alpha_{1}\rightarrow\overline{\mathcal{O}}_{0.16}(\alpha_{1},\alpha
_{1},1,1)$ due to the \textquotedblleft naive\textquotedblright\ randomization%
\[
\overline{\mathcal{O}}_{0.16}(\alpha_{1},1):=\widetilde{\mathsf{E}}%
\Bigl[\max_{0\leq j\leq2}\left(  Z_{j}-M_{j}^{\text{sty}}(\alpha
_{1},1)+0.16\,\xi_{j}\right)  \Bigr],
\]
where the scale parameter $\theta=0.16$ is taken to be roughly the option
value. (It turns out that the choice of this scale factor is not critical for
the location of the minimum.) In fact, the results, plotted in
Figure~\ref{ranvar}, tell there own tale. The second panel depicts the
relative deviation of%
\[
\mathcal{Z}_{0}(\alpha_{1},1):=\max_{0\leq j\leq2}\left(  Z_{j}-M_{j}%
^{\text{sty}}(\alpha_{1},1)\right).
\]
In fact, similar comments as for the example in Section~\ref{styex} apply. The
\textquotedblleft naive\textquotedblright\ randomization attains its minimum
at $\overline{\alpha}_{1}=0.9,$ which we red off from the tables that
generated this figure. We thus have found the martingale $M^{\text{sty}%
}(0.9,1),$ which may be virtually considered surely optimal, as can be seen
from the variance plot (second panel). Analogue visualizations for the
parameter set (Pa2) with analogue conclusions may be given, though are omitted
due to space restrictions.

\medskip

Let us now pass on to a Monte Carlo setting, where we mimic the approach in
real practice more closely. Based on $N$ simulated samples of the underlying
asset model, i.e. $S^{(n)},$ $n=1,...,N,$ we consider the minimization
\begin{equation}
\widehat{\boldsymbol{\alpha}}_{\theta}:=\underset{\boldsymbol{\alpha}}%
{\arg\min}\frac{1}{N}\sum_{n=1}^{N}\Bigl[\max_{0\leq j\leq2}\left(
Z_{j}^{(n)}-M_{j}^{(n)}(\boldsymbol{\alpha})+\theta\xi_{j}\left(  Y_{j}%
^{\star(n)}-Z_{j}^{(n)}+A_{j}^{\star(n)}\right)  \right)  \Bigr] \label{minth}%
\end{equation}
for $\theta=0$ (no randomization) and $\theta=1$ (optimal randomization),
along with the minimization%
\begin{equation}
\widehat{\boldsymbol{\alpha}}_{\theta^{\text{naive}}}:=\underset
{\boldsymbol{\alpha}}{\arg\min}\frac{1}{N}\sum_{n=1}^{N}\Bigl[\max_{0\leq
j\leq2}\left(  Z_{j}^{(n)}-M_{j}^{(n)}(\boldsymbol{\alpha})+\theta
_{j}^{\text{naive}}\xi_{j}\right)  \Bigr] \label{minpr}%
\end{equation}
based on a \textquotedblleft naive\textquotedblright randomization where the
coefficients $\theta_{j}^{\text{naive}},$ $j=0,1,2$ are pragmatically chosen.
In (\ref{minth}) and (\ref{minpr}) $M$ stands for a generic linearly
structured martingale family, such as (\ref{Msty}) and (\ref{MH}) for example.
The minimization problems (\ref{minth}) and (\ref{minpr}) may be solved by
linear programming (LP). They may be transformed into a suitable form such
that the (free) LP package in R can be applied. This transformation procedure
is straightforward and spelled out in \cite{J_DesFarMoa} for example. In the
latter paper it is argued that the required computation time scales with $N$
due to the sparse structure of the coefficient matrix involved in the LP
setup. However, taking advantage of this sparsity requires a special treatment
of the implementation of the linear program in connection with more advanced
LP solvers (as done in \cite{J_DesFarMoa}). Since this paper is essentially on
the theoretical justification of the randomized duality problem (along with
the classification of optimal martingales), we consider an in-depth numerical
analysis beyond scope of this paper.

For both parameter sets (Pa1) and (Pa2), and both martingale families
(\ref{Msty}) and (\ref{MH}) with $K=L=3,$ we have carried out the LP
optimization algorithm sketched above. We have taken $N=2000$ and for the
\textquotedblleft naive\textquotedblright\ randomization%
\[
\theta_{0}^{\text{naive}}=1.6\text{ for (Pa1), \ }\theta_{0}^{\text{naive}%
}=4.8\text{ for (Pa2), and simply }\theta_{1}^{\text{naive}}=\theta
_{2}^{\text{naive}}=0.
\]
In the Table~\ref{tablePa1}, for (Pa1), and Table~\ref{tablePa2}, for (Pa2),
we present for the minimizers $\widehat{\boldsymbol{\alpha}}_{0}%
,\widehat{\boldsymbol{\alpha}}_{1},\widehat{\boldsymbol{\alpha}}%
_{\theta^{\text{naive}}}$ the in-sample expectation $\widehat{m}$, the
in-sample standard deviation $\widehat{\sigma}/\sqrt{N},$ and the path-wise
maximum due to a single trajectory $\widehat{\sigma},$ followed by the
corresponding \textquotedblleft true\textquotedblright\ values $m^{\text{test}%
},$ $\sigma^{\text{test}}/\sqrt{N^{\text{test}}},$ $\sigma^{\text{test}},$
based on a large \textquotedblleft test\textquotedblright\ simulation of
$N^{\text{test}}=10^{6}$ samples.

\begin{table}[th]
\centering
$%
\begin{tabular}
[c]{|c|c|c|c|c|c|c|}\hline
(Pa1) & \multicolumn{3}{|c}{$M^{\text{sty}}$} &
\multicolumn{3}{|c|}{$M^{\text{H}}$}\\\hline
\multicolumn{1}{|l|}{} & $\widehat{\boldsymbol{\alpha}}_{0}$ & $\widehat
{\boldsymbol{\alpha}}_{\theta^{\text{naive}}}$ & $\widehat{\boldsymbol{\alpha
}}_{1}$ & $\widehat{\boldsymbol{\alpha}}_{0}$ & $\widehat{\boldsymbol{\alpha}%
}_{\theta^{\text{naive}}}$ & $\widehat{\boldsymbol{\alpha}}_{1}$\\\hline
\multicolumn{1}{|l|}{$\widehat{m}$} & \multicolumn{1}{|l|}{$0.16243$} &
\multicolumn{1}{|l|}{$0.16399$} & \multicolumn{1}{|l|}{$0.16403$} &
\multicolumn{1}{|l|}{$0.16268$} & \multicolumn{1}{|l|}{$0.16560$} &
\multicolumn{1}{|l|}{$0.16696$}\\\hline
\multicolumn{1}{|l|}{$\widehat{\sigma}/\sqrt{N}$} &
\multicolumn{1}{|l|}{$0.00573$} & \multicolumn{1}{|l|}{$0.00036$} &
\multicolumn{1}{|l|}{$0.00029$} & \multicolumn{1}{|l|}{$0.00574$} &
\multicolumn{1}{|l|}{$0.00113$} & \multicolumn{1}{|l|}{$0.00118$}\\\hline
\multicolumn{1}{|l|}{$\widehat{\sigma}$} & \multicolumn{1}{|l|}{$0.25639$} &
\multicolumn{1}{|l|}{$0.01608$} & \multicolumn{1}{|l|}{$0.01278$} &
\multicolumn{1}{|l|}{$0.25676$} & \multicolumn{1}{|l|}{$0.05063$} &
\multicolumn{1}{|l|}{$0.05293$}\\\hline
\multicolumn{1}{|l|}{$m^{\text{test}}$} & \multicolumn{1}{|l|}{$0.16490$} &
\multicolumn{1}{|l|}{$0.16445$} & \multicolumn{1}{|l|}{$0.16442$} &
\multicolumn{1}{|l|}{$0.16709$} & \multicolumn{1}{|l|}{$0.16664$} &
\multicolumn{1}{|l|}{$0.16685$}\\\hline
\multicolumn{1}{|l|}{$\sigma^{\text{test}}/\sqrt{N^{\text{test}}}$} &
\multicolumn{1}{|l|}{$0.00026$} & \multicolumn{1}{|l|}{$0.00001$} &
\multicolumn{1}{|l|}{$0.00001$} & \multicolumn{1}{|l|}{$0.00026$} &
\multicolumn{1}{|l|}{$0.00005$} & \multicolumn{1}{|l|}{$0.00005$}\\\hline
\multicolumn{1}{|l|}{$\sigma^{\text{test}}$} & \multicolumn{1}{|l|}{$0.26096$}
& \multicolumn{1}{|l|}{$0.01460$} & \multicolumn{1}{|l|}{$0.01064$} &
\multicolumn{1}{|l|}{$0.26439$} & \multicolumn{1}{|l|}{$0.05083$} &
\multicolumn{1}{|l|}{$0.05153$}\\\hline
\end{tabular}
$ \caption{LP minimization results due to $M^{\text{sty}}$ and $M^{\text{H}}$
for (Pa1)}%
\label{tablePa1}%
\end{table}

\begin{table}[th]
\centering
$%
\begin{tabular}
[c]{|c|c|c|c|c|c|c|}\hline
(Pa2) & \multicolumn{3}{|c}{$M^{\text{sty}}$} &
\multicolumn{3}{|c|}{$M^{\text{H}}$}\\\hline
\multicolumn{1}{|l|}{} & $\widehat{\boldsymbol{\alpha}}_{0}$ & $\widehat
{\boldsymbol{\alpha}}_{\theta^{\text{naive}}}$ & $\widehat{\boldsymbol{\alpha
}}_{1}$ & $\widehat{\boldsymbol{\alpha}}_{0}$ & $\widehat{\boldsymbol{\alpha}%
}_{\theta^{\text{naive}}}$ & $\widehat{\boldsymbol{\alpha}}_{1}$\\\hline
\multicolumn{1}{|l|}{$\widehat{m}$} & \multicolumn{1}{|l|}{$0.48748$} &
\multicolumn{1}{|l|}{$0.49471$} & \multicolumn{1}{|l|}{$0.49490$} &
\multicolumn{1}{|l|}{$0.49329$} & \multicolumn{1}{|l|}{$0.50082$} &
\multicolumn{1}{|l|}{$0.50546$}\\\hline
\multicolumn{1}{|l|}{$\widehat{\sigma}/\sqrt{N}$} &
\multicolumn{1}{|l|}{$0.02064$} & \multicolumn{1}{|l|}{$0.00201$} &
\multicolumn{1}{|l|}{$0.00152$} & \multicolumn{1}{|l|}{$0.02076$} &
\multicolumn{1}{|l|}{$0.00318$} & \multicolumn{1}{|l|}{$0.00308$}\\\hline
\multicolumn{1}{|l|}{$\widehat{\sigma}$} & \multicolumn{1}{|l|}{$0.92301$} &
\multicolumn{1}{|l|}{$0.08981$} & \multicolumn{1}{|l|}{$0.06801$} &
\multicolumn{1}{|l|}{$0.92852$} & \multicolumn{1}{|l|}{$0.14222$} &
\multicolumn{1}{|l|}{$0.13762$}\\\hline
\multicolumn{1}{|l|}{$m^{\text{test}}$} & \multicolumn{1}{|l|}{$0.49820$} &
\multicolumn{1}{|l|}{$0.49639$} & \multicolumn{1}{|l|}{$0.49633$} &
\multicolumn{1}{|l|}{$0.51079$} & \multicolumn{1}{|l|}{$0.50870$} &
\multicolumn{1}{|l|}{$0.50912$}\\\hline
\multicolumn{1}{|l|}{$\sigma^{\text{test}}/\sqrt{N^{\text{test}}}$} &
\multicolumn{1}{|l|}{$0.00095$} & \multicolumn{1}{|l|}{$0.00009$} &
\multicolumn{1}{|l|}{$0.00007$} & \multicolumn{1}{|l|}{$0.00097$} &
\multicolumn{1}{|l|}{$0.00016$} & \multicolumn{1}{|l|}{$0.00015$}\\\hline
\multicolumn{1}{|l|}{$\sigma^{\text{test}}$} & \multicolumn{1}{|l|}{$0.95415$}
& \multicolumn{1}{|l|}{$0.09038$} & \multicolumn{1}{|l|}{$0.06674$} &
\multicolumn{1}{|l|}{$0.97272$} & \multicolumn{1}{|l|}{$0.16047$} &
\multicolumn{1}{|l|}{$0.15103$}\\\hline
\end{tabular}
$ \caption{LP minimization results due to $M^{\text{sty}}$ and $M^{\text{H}}$
for (Pa2)}%
\label{tablePa2}%
\end{table}

The results in tables Tables~\ref{tablePa1}-\ref{tablePa2} show  that
even  a simple (naive) randomization at $j=0$ leads to a substantial variance reduction
(up to \(10\) times) not only on training samples but also on the test ones.
We think that for more  structured examples and more complex families of martingales
 even more pronounced variance reduction effect may be expected. For example, in general it might be better to take Wiener integrals, i.e.
 objects of the form $\int\alpha(t,X_t)dW,$ where $\alpha$ runs through some linear space of basis functions,
 as building blocks for the  martingale family.
 Also other types of randomization can be used, for example one may take different distributions for the r.v. \(\xi.\) However all these issues
 will be analyzed in  a subsequent study.

\begin{center}
\begin{figure}[tbh]
\includegraphics[width= 6cm]{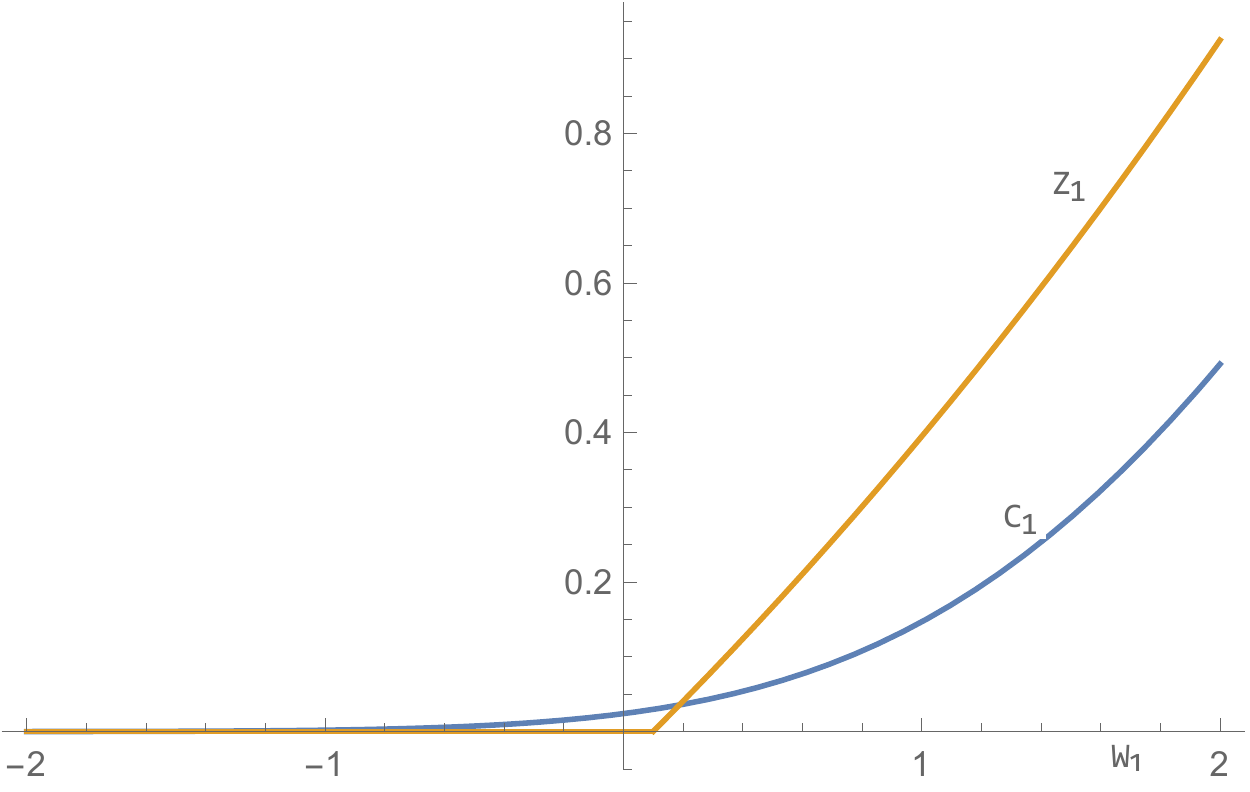}\includegraphics[width= 6cm]{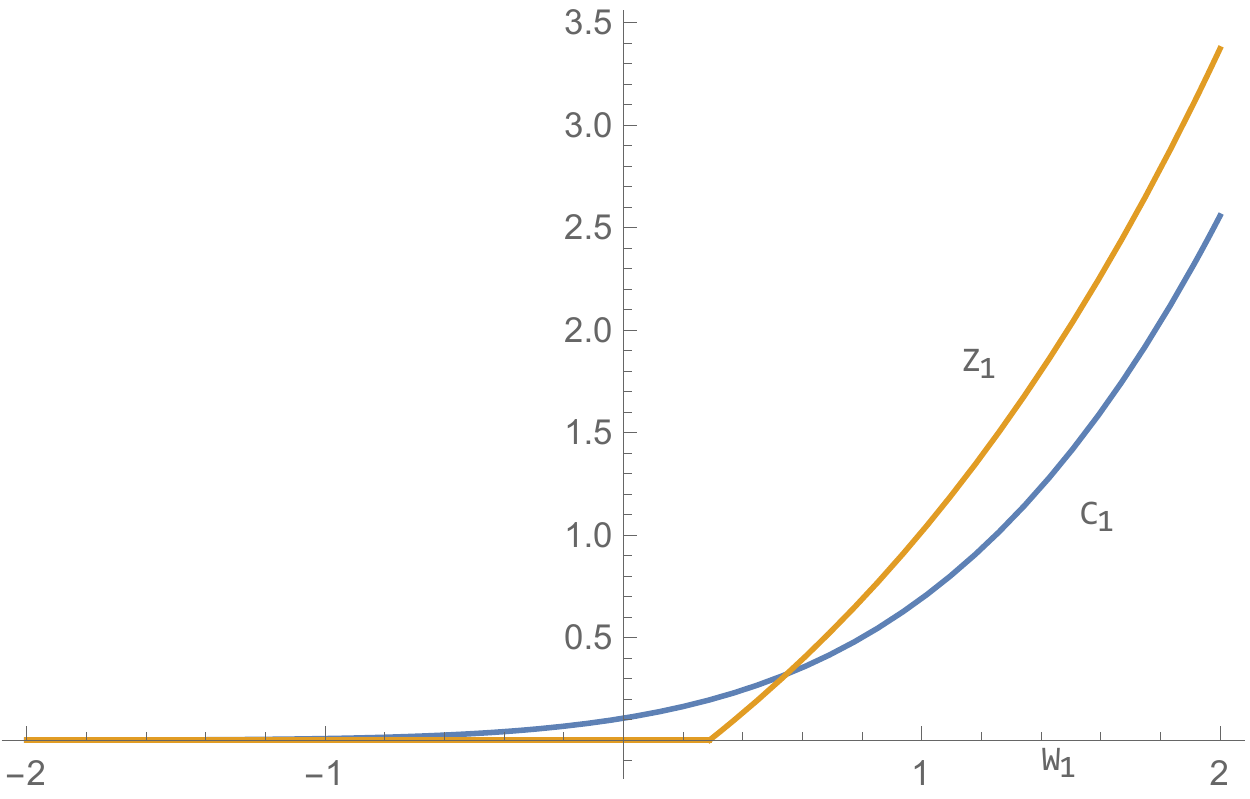}\caption{Cash-flow
$Z_{1}$ versus continuation value $C_{1}$ as a function of $W_{1}$ for (Pa1)
(left) and (Pa2) (right)}%
\label{exb}%
\end{figure}

\begin{figure}[tbh]
\hspace{3cm}%
\includegraphics[width= 7cm]{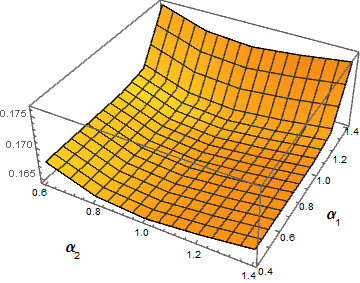}\caption{Object
function for BS-Call (Pa1) without randomization as function of $(\alpha
_{1},\alpha_{2})$}%
\label{plat}%
\end{figure}

\begin{figure}[tbh]
\hspace{3cm}%
\includegraphics[width= 7cm]{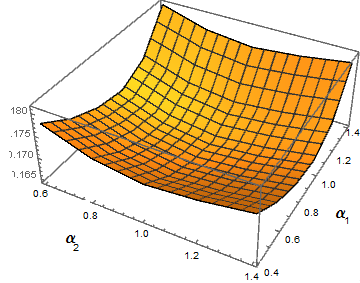}\caption{Object
function for BS-Call (Pa1) with optimal randomization as function of
$(\alpha_{1},\alpha_{2})$}%
\label{platran}%
\end{figure}

\begin{figure}[tbh]
\includegraphics[width= 6cm]{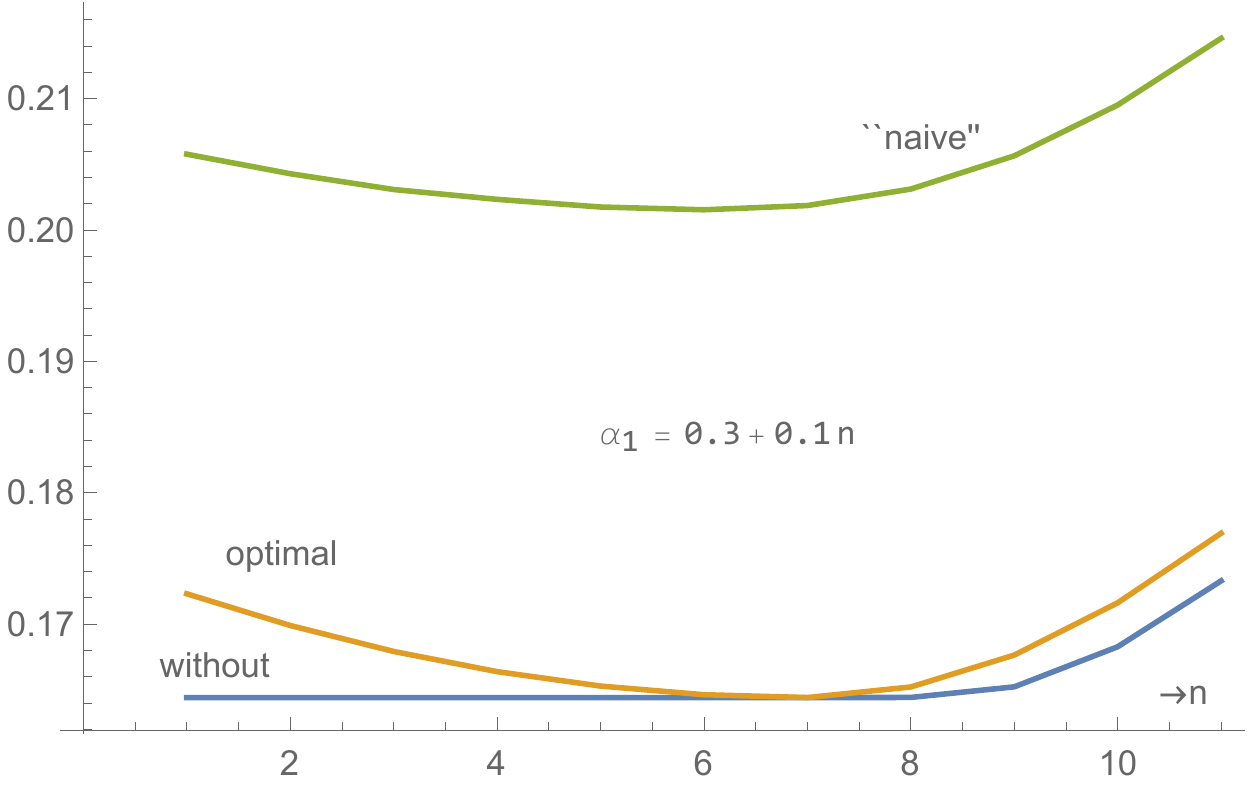}
\includegraphics[width= 6cm]{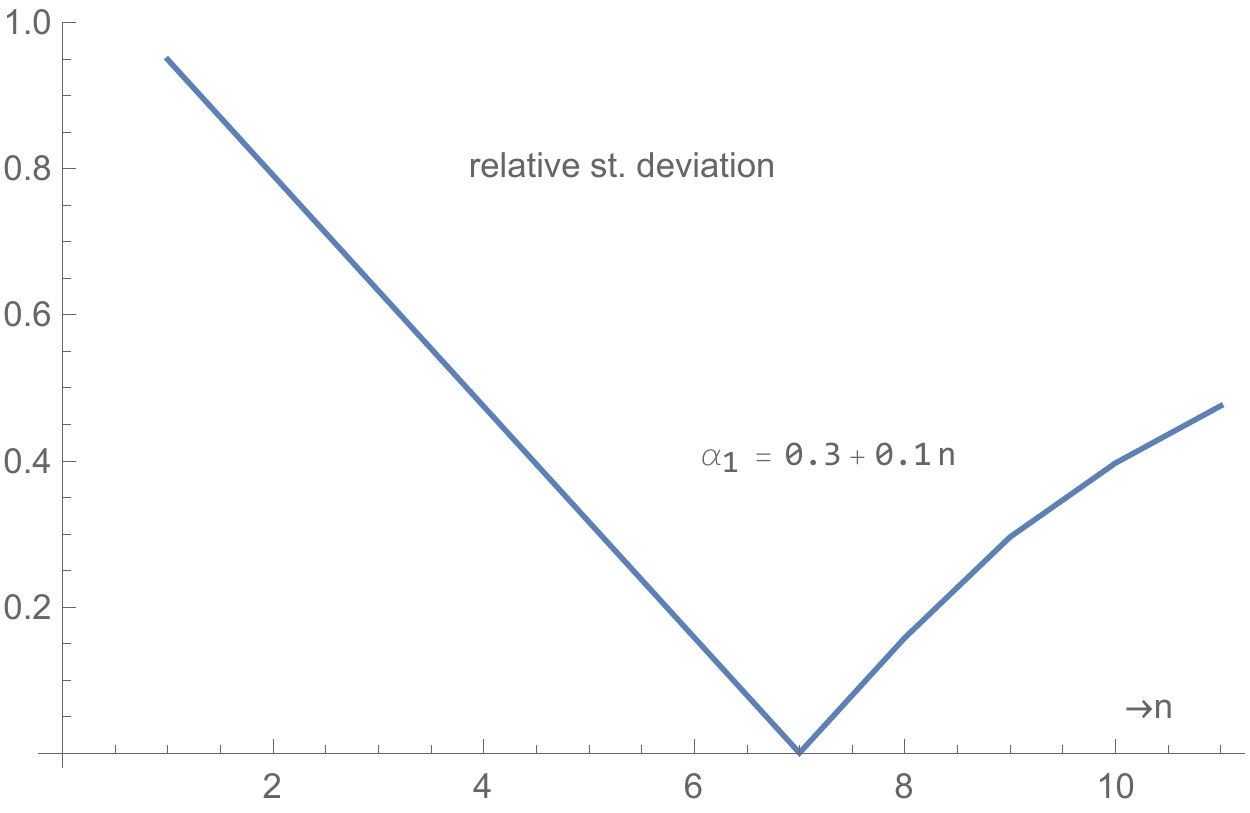}\caption{Left panel: object
functions of $\alpha_{1},$ with $\alpha_{2}=1$ fixed, for BS-Call (Pa1)
without, optimal, and ``naive'' randomization; right panel: relative deviation
of $\mathcal{Z}_{0}(\alpha_{1},1)$ (i.e. without randomization) }%
\label{ranvar}%
\end{figure}
\end{center}

\clearpage

\bibliographystyle{plain}
\bibliography{rand-stop_new}

\end{document}